\theoremstyle{plain}
\newtheorem{thm}{Theorem}[section]
\newtheorem{lem}[thm]{Lemma}
\newtheorem{cor}[thm]{Corollary}
\theoremstyle{definition}
\newtheorem{defn}[thm]{Definition}
\theoremstyle{remark}
\newtheorem{rem}[thm]{Remark}
\numberwithin{equation}{section}
\newcommand{\Z}{\mathbb{Z}}
\newcommand{\R}{\mathbb{R}}
\newcommand{\C}{\mathbb{C}}
\newcommand{\pa}{\partial}
\newcommand{\eps}{\varepsilon}
\newcommand{\jb}[1]{\langle #1 \rangle}
\newcommand{\Jb}[1]{\bigl\langle #1 \bigr\rangle}
\newcommand{\JB}[1]{\Bigl\langle #1 \Bigr\rangle}
\newcommand{\jbf}[1]{\left\langle #1 \right\rangle}
\DeclareMathOperator{\realpart}{\rm Re}
\DeclareMathOperator{\imagpart}{\rm Im}
\newcommand{\dis}{\displaystyle}
\newcommand{\sgn}[1]{\,{\rm sgn}(#1)}
\newcommand{\Co}[3]{C_{#1,#2,#3}}
\begin{document}
\title{
 Null structure in a system of quadratic \\
derivative nonlinear Schr\"odinger equations 
 }  

\author{
         Masahiro Ikeda\thanks{
             Department of Mathematics, Graduate School of Science, 
             Osaka University. 
             1-1 Machikaneyama-cho, Toyonaka, Osaka, 560-0043, Japan 
              (E-mail: {\tt m-ikeda@cr.math.sci.osaka-u.ac.jp})}
         \and  
         Soichiro Katayama\thanks{
             Department of Mathematics, Wakayama University. 
             930 Sakaedani, Wakayama 640-8510, Japan
              (E-mail: {\tt katayama@center.wakayama-u.ac.jp})}
         \and  
         Hideaki Sunagawa\thanks{
             Department of Mathematics, Graduate School of Science, 
             Osaka University. 
             1-1 Machikaneyama-cho, Toyonaka, Osaka 560-0043, Japan.
             (E-mail: {\tt sunagawa@math.sci.osaka-u.ac.jp})}
}

\date{\today }  

\maketitle

\noindent{\bf Abstract:}\ 
We consider the initial value problem for a three-component system of 
quadratic derivative nonlinear Schr\"odinger equations in two space dimensions 
with the masses satisfying the resonance relation. We present a structural 
condition on the nonlinearity under which small data global existence holds. 
It is also shown that the solution is asymptotically free. 
Our proof is based on the commuting vector field method combined with 
smoothing effects.
\\

\noindent{\bf Key Words:}\ 
Derivative nonlinear Schr\"odinger equation; 
Null condition; 
Mass resonance.\\

\noindent{\bf 2000 Mathematics Subject Classification:}\ 
35Q55, 35B40
\\

\section{Introduction}
We consider the initial value problem for the system of 
nonlinear Schr\"odinger equations
\begin{align}
 \left( i\pa_t+\frac{1}{2m_j}\Delta \right) u_j=F_j(u,\pa_x u), 
 \qquad  t>0,\ x \in \R^d,\ j=1,2,3
\label{Eq}
\end{align}
with
\begin{align}
 u_j(0,x)=\varphi_j(x), 
 \qquad  x\in \R^d,\ j=1,2,3,
\label{Data}
\end{align}
where $i=\sqrt{-1}$, 
each $m_j$ is a non-zero real constant,
$\pa_t=\pa/\pa t$, 
and
$\Delta=\sum_{a=1}^{d}\pa_{x_a}^2$
with $\pa_{x_a}=\pa/\pa x_a$
for $x=(x_a)_{a=1,\hdots,d} \in \R^d$.
$\varphi=(\varphi_j)_{j=1,2,3}$ is a prescribed $\C^3$-valued function, 
and 
$u=(u_j(t,x))_{j=1,2,3}$ is a $\C^3$-valued unknown function, 
while $\pa_x  u=(\pa_{x_a} u_j)_{a=1,\hdots,d; j=1,2,3}$ stands for its first order derivatives with respect to $x$. 
The nonlinear term $F=(F_j(u,\pa_x  u))_{j=1,2,3}$ is always assumed 
to be of the form 
\begin{align}
\left\{\begin{array}{l}
 \dis{F_1(u,\pa_x  u) =
  \sum_{|\alpha|,|\beta|\le 1} \Co{1}{\alpha}{\beta}  \, 
  (\overline{\pa^{\alpha} u_2})  (\pa^{\beta} u_3),
 }\\[7mm]
 \dis{F_2(u,\pa_x u) =
  \sum_{|\alpha|,|\beta|\le 1} \Co{2}{\alpha}{\beta}  \, 
   (\pa^{\alpha} u_3) (\overline{\pa^{\beta} u_1}),  
 }\\[7mm]
 \dis{F_3(u,\pa_x u) =
  \sum_{|\alpha|,|\beta|\le 1} \Co{3}{\alpha}{\beta}  \, 
  (\pa^{\alpha} u_1) (\pa^{\beta} u_2)
 }
\end{array}\right.
 \label{Nonlin}
\end{align}
with some complex constants $\Co{k}{\alpha}{\beta}$.

The system \eqref{Eq} appears in various physical settings 
(see, e.g., \cite{CC}, \cite{CCO}). 
If the derivatives are not included in $F$, 
this system reads
\begin{equation}
\begin{cases}
 \left( i\pa_t+\frac{1}{2m_1}\Delta \right) u_1= \overline{u_2} u_3, \\
 \left( i\pa_t+\frac{1}{2m_2}\Delta \right) u_2= u_3 \overline{u_1}, \\
 \left( i\pa_t+\frac{1}{2m_3}\Delta \right) u_3= u_1 u_2,
\end{cases}
 \quad  t>0,\ x \in \R^d.
\label{NLS3}
\end{equation}
Note also that the two-component system 
\begin{equation}
\begin{cases}
 \left( i\pa_t+\frac{1}{2m_1}\Delta \right) u_1= \overline{u_1} u_2, \\
 \left( i\pa_t+\frac{1}{2m_2}\Delta \right) u_2= u_1^2, 
\end{cases}
 \quad  t>0,\ x \in \R^d
\label{NLS2}
\end{equation}
can be regarded as a degenerate case of \eqref{NLS3}. 
In the case of $d=2$, Hayashi--Li--Naumkin \cite{HLN1} obtained a 
small data global existence result for \eqref{NLS2} under the 
relation 
\begin{align}
 m_2=2m_1.
 \label{Mass2}
\end{align}
The non-existence of the usual scattering state for \eqref{NLS2} is 
also proved under \eqref{Mass2}. 
Higher dimensional case ($d \ge 3$) for \eqref{NLS2} under the relation 
\eqref{Mass2} is considered by Hayashi--Li--Ozawa \cite{HLO} from 
the viewpoint of small data scattering.  
Remark that \eqref{Mass2} is often called the mass resonance 
relation, which was first discovered in the study of nonlinear Klein-Gordon 
systems (see \cite{S1}, \cite{T2}, \cite{DFX}, \cite{KS2}, 
\cite{KOS}, etc., and the references cited therein). 
The above-mentioned results for the two-component system \eqref{NLS2} 
can be generalized to the three-component system \eqref{NLS3} 
if the mass resonance relation \eqref{Mass2} is replaced by 
\begin{align}
 m_3=m_1+m_2.
 \label{Mass3}
\end{align}
However, it is non-trivial at all whether or not these can be 
generalized to the case of \eqref{Eq} under \eqref{Mass3}, 
because the presence of the derivatives in the nonlinearity causes a derivative loss in general. 
On the other hand,  the presence of the derivatives in the nonlinearity 
sometimes yields extra-decay property.  
One of the most successful example will be the null condition 
introduced by Christodoulou \cite{Chr} and Klainerman \cite{K} in the case of 
quadratic quasilinear systems of wave equations in three space dimensions. 
Our aim in this paper is to reveal analogous null structure 
in the case of quadratic derivative nonlinear Schr\"odinger systems. 
Of our particular interest is  the case of $d=2$, 
because the two-dimensional case for the Schr\"odinger equations 
corresponds to the three-dimensional case for wave equations  
from the viewpoint of the decay rate of solutions to the linearized 
equations.

In what follows, we concentrate our attention on the three-component 
Schr\"odinger system \eqref{Eq} with \eqref{Nonlin} 
under the relation \eqref{Mass3}. 
Remark that \eqref{Mass3} enables us to use the Leibniz-type rules 
for the operator $J_m(t)=x+\frac{it}{m}\pa_x$ (see Lemma~\ref{Lem_leibniz} below), 
which play a crucial role in our analysis. 
It should be noted that single quadratic nonlinear Schr\"odinger 
equations are distinguished from the present setting 
because \eqref{Mass3} is never satisfied when $m_1=m_2=m_3 \ne 0$. 
We refer to \cite{BHN}, \cite{De}, \cite{GMS}, \cite{HN} and the references 
cited therein for recent results on small data global existence and 
asymptotic behavior of solutions to single quadratic nonlinear 
Schr\"odinger equations in two space dimensions. 
We hope that our approach might be generalized to more general settings, 
but we do not pursue it here to avoid several technical complications.

\section{Main Results}

First of all, we should formulate the null condition for the Schr\"odinger 
case in an appropriate way. 
One way of understanding the null condition for quasi-linear wave equations
is the John-Shatah observation: ``The requirement that no
plane wave solution is genuinely nonlinear leads to the null condition"
(see \cite{J} for the detail; see also \cite{A} for the application to elastic waves).
Another way is to see the null condition as the condition to guarantee
the cancellation of the main part of the nonlinearity, which is 
sometimes called the H\"ormander test (\cite{hor}). 

Let us take the second way. For the solution $u_j^0$ to $\left(i\pa_t+\frac{1}{2m_j}\Delta\right)u_j^0=0$ with $u_j^0(0)=\varphi_j$, 
it is known that we have
$$
\pa_x^\alpha u_j^0(t,x)
\sim 
\left(\frac{im_jx}{t}\right)^\alpha 
\left({\frac{m_j}{it}}\right)^{\frac{d}{2}}
\widehat{\varphi_j}\left(\frac{m_jx}{t}\right) 
e^{\frac{im_j|x|^2}{2t}}
=: \left(\frac{im_jx}{t}\right)^\alpha
\psi_j(t,x) e^{\frac{im_j|x|^2}{2t}}
$$
as $t \to \infty$, 
where $\hat{\varphi}$ denotes the Fourier transform of $\varphi$. 
Then, under the mass resonance relation \eqref{Mass3}, 
the direct calculation yields 
$$
F_j({u^0,\pa_x u^0})\sim p_j\left(\frac{x}{t}\right)
\Psi_j(t,x) e^{\frac{im_j|x|^2}{2t}},
 \quad j=1,2,3,
$$
where 
\begin{align*}
 &p_1(\xi) =
 \sum_{|\alpha|,|\beta|\le 1} \Co{1}{\alpha}{\beta}  \, 
 (\overline{im_2 \xi})^{\alpha}\, (im_3\xi)^{\beta}, 
 \\
 &p_2(\xi) =
 \sum_{|\alpha|,|\beta|\le 1} \Co{2}{\alpha}{\beta}  \, 
 (im_3 \xi)^{\alpha} \, (\overline{im_1\xi})^{\beta},
 \\
 &p_3(\xi) =
 \sum_{|\alpha|,|\beta|\le 1} \Co{3}{\alpha}{\beta}  \, 
 (im_1\xi)^{\alpha} \,  (im_2\xi)^{\beta},
\end{align*}
with $\Psi_1=\overline{\psi_2}\psi_3$, $\Psi_2=\psi_3\overline{\psi_1}$,
$\Psi_3=\psi_1\psi_2$.
In order that these main parts of $F_j$ vanish,
we must have $p_1(\xi)=p_2(\xi)=p_3(\xi)=0$. 

The above observation naturally leads us to the following definition:

\begin{defn}\label{Def_null} 
We say that the nonlinear term $F=(F_j)_{j=1,2,3}$ of the form \eqref{Nonlin} satisfies 
the null condition if $p_1(\xi)=p_2(\xi)=p_3(\xi)=0$ for all $\xi \in \R^d$.
\end{defn}

\begin{rem}
In the case of one-dimensional cubic nonlinear Schr\"odinger equations, 
similar structural conditions have been considered in \cite{T1}, \cite{KT}, 
\cite{HN0}, \cite{KS1}, \cite{S2}, \cite{HNS}, etc. 
Analogous consideration for quadratic nonlinear 
Klein-Gordon systems can be found in \cite{DFX}, \cite{KS2}, \cite{KOS}. 
However, as far as the authors know, there are no previous papers 
which concern quadratic derivative nonlinear Schr\"odinger systems from 
this viewpoint. \\
\end{rem}

Now we are going to state our results. 
For a non-negative integer $s$, we denote by $H^{s}(\R^d)$ the standard 
Sobolev space: 
$$
 H^{s}(\R^d)= \bigl\{
 \phi \,;\, \pa_x^{\alpha}\phi(x) \in L^2(\R^d) 
  \ \mbox{for all $|\alpha|\le s$} \bigr\}
$$
equipped with the norm
$$
 \|\phi\|_{H^{s}(\R^d)}
 =
 \left(\sum_{|\alpha|\le s}\|\pa_x^{\alpha} \phi\|_{L^2(\R^d)}^{2}\right)^{1/2}.
$$
We also introduce the following function space: 
$$
 \Sigma^{s}(\R^d)= \bigl\{
 \phi \, ;\, x^{\alpha}\phi(x) \in H^{s-|\alpha|}(\R^d) 
  \ \mbox{for all $|\alpha|\le s$} \bigr\}
$$
equipped with the norm
$$
 \|\phi\|_{\Sigma^{s}(\R^d)}
 =
 \left(\sum_{|\alpha|\le s}\|x^{\alpha} \phi\|_{H^{s-|\alpha|}(\R^d)}^{2}\right)^{1/2}.
$$

The main result of this paper is the following.
\begin{thm} \label{Thm1}
Let $\varphi \in \Sigma^{s}(\R^2)$ with $s\ge 7$, 
and $F$ be of the form \eqref{Nonlin}. 
Assume that \eqref{Mass3} is satisfied and 
the nonlinear term $F$ satisfies the null condition in the sense of 
Definition~$\ref{Def_null}$. 
Then there exists a positive constant $\eps_1$ such that 
\eqref{Eq}--\eqref{Data} admits a unique global solution 
$u\in 
   C\bigl([0,\infty);\Sigma^{s}(\R^2)\bigr)$,
provided that $\|\varphi\|_{\Sigma^{s}(\R^2)} \le \eps_1$. 
Moreover, the solution $u(t)$ has a free profile, i.e., 
there exists $\varphi^+=(\varphi_j^+)_{j=1,2,3} \in \Sigma^{s-1}(\R^2)$ 
such that 
$$
 \lim_{t \to \infty}
 \sum_{j=1}^{3} 
 \|U_{m_j}(-t)u_j(t)- \varphi_j^{+}\|_{\Sigma^{s-1}(\R^2)}
 = 0,
$$
where $U_{m}(t)=\exp\left(\frac{it}{2m}\Delta\right)$. 
\end{thm}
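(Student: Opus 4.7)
The plan is to prove Theorem~\ref{Thm1} by a standard continuity argument based on a priori estimates in a norm equivalent, at each time $t$, to $\sum_{j}\|U_{m_j}(-t)u_j(t)\|_{\Sigma^{s}(\R^2)}$. Since the Galilean vector field $J_{m_j}(t)=x+(it/m_j)\pa_x$ commutes with $i\pa_t+(1/2m_j)\Delta$ and conjugates to multiplication by $x$ through $U_{m_j}(t)$, the working norm is $\sum_{|\alpha|+|\beta|\le s,\,j}\|\pa_x^{\alpha}J_{m_j}^{\beta}u_j(t)\|_{L^2}$. Local existence in $\Sigma^{s}(\R^2)$ for quadratic derivative Schr\"odinger systems is classical, so everything reduces to closing a bootstrap assumption of the shape $\|u(t)\|_{X}\le M\eps$ on any interval of existence, improved to $\|u(t)\|_{X}\le (M/2)\eps$ under the smallness hypothesis.

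The first ingredient is pointwise decay. In two space dimensions, a Klainerman--Sobolev-type estimate, obtained by conjugating the usual Sobolev embedding through $U_{m_j}(t)$, yields $\|u_j(t)\|_{L^{\infty}}\lesssim \jb{t}^{-1}\sum_{|\alpha|\le 2}\|J_{m_j}^{\alpha}u_j(t)\|_{L^2}$ for $t\ge 1$. Plugging this naively into $F_j$ only gives $\|F_j\|_{L^2}\lesssim t^{-1}\|u\|_{X}^{2}$, which is critical (logarithmically divergent when integrated in time) and, before accounting for the derivative loss, already fails to close.

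The null condition is what removes this borderline divergence. Using the algebraic identity $\pa_{x_a}u_j=(im_j x_a/t)u_j+(m_j/(it))(J_{m_j}u_j)_a$, one decomposes each nonlinear term as
\begin{equation*}
F_j(u,\pa_x u)=p_j(x/t)\,\mathcal{G}_j(u)+R_j(t,x),
\end{equation*}
where $\mathcal{G}_j$ is a product of the undifferentiated components and $R_j$ is a sum of terms each containing at least one factor $J_{m_k}u_k$ together with an extra $t^{-1}$ prefactor; the mass resonance \eqref{Mass3} is exactly what matches the oscillatory phases so that such a clean algebraic split is possible. The null condition forces $p_j\equiv 0$, so only $R_j$ survives, and its $L^2$-norm is controlled by $t^{-2}\|u\|_{X}^{2}$, which is integrable in $t$.

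For the top-order energy and to absorb the genuine derivative loss produced by commuting $\pa_x$ and $J_{m_j}$ into the quadratic nonlinearity, one then appeals to smoothing estimates of Kenig--Ponce--Vega type for the inhomogeneous Schr\"odinger equation; the Leibniz-type rule for $J_{m_j}$ supplied by Lemma~\ref{Lem_leibniz}, which again depends crucially on \eqref{Mass3}, is what allows the null-type decomposition to be propagated to every commutator $\pa_x^{\alpha}J_{m_j}^{\beta}F_j$ of order up to $s$. Once the bootstrap closes, the solution extends globally, and the integrability of the $R_j$ contributions in the relevant norm shows that $U_{m_j}(-t)u_j(t)$ is Cauchy in $\Sigma^{s-1}(\R^2)$ as $t\to\infty$, delivering the asymptotically free profile $\varphi^{+}$. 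The main obstacle I anticipate is orchestrating these ingredients consistently: one must perform the null decomposition not merely on the bare $F_j$ but on every high-order commutator, and simultaneously arrange the smoothing framework so that the half-derivative gain precisely matches the half-derivative loss, all without spoiling the integrable $t^{-2}$ decay that the null structure provides.
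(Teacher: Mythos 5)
Your proposal is correct and follows essentially the same route as the paper: Galilean vector fields $J_{m_j}$ with the mass-resonance Leibniz rules, the $\jb{t}^{-1}$ Klainerman--Sobolev decay, the substitution $\pa_{x_a}=\frac{im}{t}(x_a-J_{m,a})$ so that the null condition kills the non-integrable $p_j(x/t)$ part and leaves an integrable $t^{-2}$ remainder, Kenig--Ponce--Vega-type smoothing for the top-order derivative loss, and scattering from the resulting integrability. The only orchestration detail you leave implicit, which the paper makes precise, is the two-tier energy hierarchy: the top norm $\|u\|_{\Gamma,s}$ is allowed to grow like $(1+t)^{C\eps}$ while the lower norm $\|u\|_{\Gamma,s-1}$ stays uniformly of size $\eps$, the null structure's extra $t^{-1}$ being exactly what absorbs that slow growth.
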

\begin{rem} 
As we have mentioned in the introduction, small data global existence 
for \eqref{NLS3} has been proved although the nonlinearity in \eqref{NLS3} 
does not satisfy the  null condition in the sense of 
Definition~\ref{Def_null}. However, it is also shown in  \cite{HLN1} that 
the solution to \eqref{NLS3} does not have the free profile, which should be 
contrasted with Theorem~\ref{Thm1}. 
If one tries  to show the global existence in the case where the 
null condition is violated, some long-range effects must be taken into 
account 
{(see \cite{HLN1}, \cite{HLN2}, \cite{KLS}, etc., for the related works)}. 
\end{rem}

If we do not assume the null condition, our approach does not imply the small 
data global existence in two-dimensional case. 
However, we can prove the almost global existence. 

\begin{thm} \label{Thm2}
Let $\varphi \in \Sigma^{s}(\R^2)$ with $s\ge 7$, 
and $F$ be of the form \eqref{Nonlin}. 
Assume that \eqref{Mass3} is satisfied. 
Then there exist  positive constants $\eps_2$ and ${\omega}$ such that 
\eqref{Eq}--\eqref{Data} admits a unique solution 
$u \in 
 C\bigl([0,T];\Sigma^{s}(\R^2)\bigr)$
with some $T\ge \exp({\omega}/\eps)$, provided that 
$ \|\varphi\|_{\Sigma^{s}(\R^2)} =:\eps \le \eps_2$. 
\end{thm}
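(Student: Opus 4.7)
\textbf{Proof strategy for Theorem~\ref{Thm2}.}\
The plan is to combine standard local well-posedness with a priori estimates derived from the commuting vector field method and smoothing effects, in the spirit of John's almost global existence result for quadratic quasilinear wave equations in three space dimensions. Local solvability in $C([0,T_0];\Sigma^s(\R^2))$ for some $T_0>0$ is obtained by a contraction mapping in a norm combining $\Sigma^s$-regularity with a Kenig--Ponce--Vega type local smoothing norm, the latter being needed to absorb the derivative appearing in \eqref{Nonlin}. The blow-up alternative then reduces matters to an a priori bound for a suitable norm up to $T=\exp({\omega}/\eps)$.

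Next, introduce the commuting vector fields $\pa_{x_a}$ and $J_{m_j}(t)=x+\frac{it}{m_j}\pa_x$; the latter commutes with $i\pa_t+\frac{1}{2m_j}\Delta$, and the mass resonance relation \eqref{Mass3} is precisely what makes the Leibniz-type rule of Lemma~\ref{Lem_leibniz} applicable to the bilinear forms in \eqref{Nonlin}. Define
$$
E_s(t)=\sum_{j=1}^{3}\sum_{|\alpha|+|\beta|\le s}\|\pa_x^{\alpha}J_{m_j}(t)^{\beta}u_j(t)\|_{L^2(\R^2)}^2,
$$
which is equivalent to $\|u(t)\|_{\Sigma^s(\R^2)}^2$ through the intertwining relation between $J_m(t)$ and $U_m(t)\,x\,U_m(-t)$. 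A Schr\"odinger-adapted Klainerman--Sobolev inequality then gives, in two space dimensions,
$$
\|u_j(t)\|_{L^\infty(\R^2)}+\|\pa_x u_j(t)\|_{L^\infty(\R^2)}\lesssim (1+t)^{-1}E_s(t)^{1/2}.
$$
Placing one factor of each bilinear term in $L^\infty$ and the other in $L^2$, and absorbing the derivative loss in \eqref{Nonlin} via the smoothing gain already built into the contraction norm, one arrives at the differential inequality
$$
\frac{d}{dt}E_s(t)\lesssim (1+t)^{-1}E_s(t)^{3/2}.
$$

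Integration yields $E_s(t)^{-1/2}\gtrsim \eps^{-1}-C\log(1+t)$, so $E_s(t)\le 4\eps^2$ on the interval where $C\eps\log(1+t)\le 1/2$, i.e.\ $t\le \exp({\omega}/\eps)$ for sufficiently small ${\omega}>0$; together with the local theory this extends the solution up to $T\ge \exp({\omega}/\eps)$. The main obstacle is closing the energy estimate despite the $\pa_x u$ in \eqref{Nonlin}: the smoothing gain must be balanced carefully against the vector-field weights so that the bilinear $L^\infty\cdot L^2$ splitting is legitimate at the top order $s$, and it is for this balancing that the regularity threshold $s\ge 7$ enters. Without the null condition one cannot extract the extra decay that would come from the cancellation $p_j\equiv 0$, so the borderline integral $\int_1^t\tau^{-1}\,d\tau=\log t$ is unavoidable; this logarithmic divergence is exactly what produces the exponential (rather than infinite) lifespan bound.
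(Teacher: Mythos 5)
Your overall architecture is right --- vector-field energies built from $\pa_x$ and $J_{m_j}(t)$, a Klainerman--Sobolev decay estimate $(1+t)^{-1}$ in dimension $2$, and the logarithmic divergence of $\int (1+t)^{-1}\,dt$ producing an $\exp(\omega/\eps)$ lifespan, exactly as the paper intends. But the central difficulty of the proof, which is to close the top-order estimate despite the derivative in \eqref{Nonlin}, is dismissed in one clause (``absorbing the derivative loss \ldots via the smoothing gain already built into the contraction norm''), and that clause does not survive scrutiny. The smoothing effect is not a property of the local-existence contraction norm that the a priori estimate can silently inherit; the paper has to re-derive it at the a priori level through Lemmas~\ref{Lem_smoothing}, \ref{Lem_aux} and \ref{Lem_QLE}, which replace the usual $L^2$ pairing by the gauged pairing $\langle S_j f, S_j g\rangle$ and trade the dangerous top-order term $g\,\pa_{x_a}\Gamma^\alpha u$ for a positive half-derivative smoothing functional plus a harmless $\jb{t}^{-1}$-weighted quantity. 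A naive $L^\infty\cdot L^2$ splitting at order $s$ would instead force $\|u\|_{\Gamma,s+1}$ to appear, which $E_s$ does not control; your claimed pointwise ODE $\dot E_s \lesssim (1+t)^{-1}E_s^{3/2}$ is not established and is not what the machinery produces.

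What the machinery does produce, and what the paper actually uses, is a two-tier bootstrap: the low-order norm $E_{s-1}$ satisfies a standard energy estimate against the top-order norm (Lemma~\ref{Lem_est_low}), while the top-order norm $\|u(t)\|_{\Gamma,s}$ is controlled by the smoothing estimate in the \emph{quasilinear} form $L_{m_j}u_j^{(\alpha)}=\sum(g_{j,a}^\alpha\pa_{x_a}u_k^{(\alpha)}+\cdots)+r_j^\alpha$, yielding via Gronwall the \emph{slowly growing} bound $\|u(t)\|_{\Gamma,s}\le C_4\eps(1+t)^{C_5 E_{s-1}(T)}$ (Lemma~\ref{Lem_est_high}). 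Substituting this into the low-order inequality gives $E_{s-1}(T)\le C_1\eps + C_2C_4 e^{C_5A\omega}A\eps^2\log(1+T)$, and the lifespan is read off from requiring $\eps\log(1+T)\le \omega$. Note the structural difference from your one-norm picture: the top-order norm is \emph{not} shown to stay $O(\eps)$ pointwise from a cubic ODE; it is allowed to grow like $t^{C\eps}$, and boundedness up to $\exp(\omega/\eps)$ is a consequence, not a hypothesis. If you want to pursue a single-norm version you must still (i) run the $S(t)$-gauged energy estimate of Section~5 to avoid the derivative loss, (ii) track that the resulting right-hand side is of the form $(1+t)^{-1}E_{s_0}(T)\,\|u(t)\|_{\Gamma,s}^2$ with the small prefactor $E_{s_0}(T)$ a \emph{sup in time}, not a pointwise quantity, and (iii) close via Gronwall plus a continuity argument. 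These are exactly the steps your proposal skips, and they are where the regularity threshold $s\ge 7$ and the small-data smallness $e_{t_0,T}\le\delta$ for Lemma~\ref{Lem_QLE} actually get used.
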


In the higher dimensional case ($d\ge 3$), we are able to show the following 
small data global existence result without assuming the null condition: 

\begin{thm} \label{Thm3}
Let $d\ge 3$, $\varphi \in \Sigma^{s}(\R^d)$ with 
$s\ge 2\left[\frac{d}{2}\right]+5$, 
and $F$ be of the form \eqref{Nonlin},
where $\left[\frac{d}{2}\right]$ is the largest integer not exceeding $\frac{d}{2}$. 
Assume that \eqref{Mass3} is satisfied. 
Then there exists a positive constant $\eps_3$ such that 
\eqref{Eq}--\eqref{Data} admits a unique global solution 
$u \in 
  C\bigl([0,\infty);\Sigma^{s}(\R^d)\bigr)$,
provided that $\|\varphi\|_{\Sigma^{s}(\R^d)} \le \eps_3$. 
Moreover, 
there exists $\varphi^+=(\varphi_j^+)_{j=1,2,3} \in \Sigma^{s-1}(\R^d)$ 
such that 
$$
 \lim_{t \to \infty}
 \sum_{j=1}^{3} 
 \|U_{m_j}(-t)u_j(t)- \varphi_j^{+}\|_{\Sigma^{s-1}(\R^d)}
 = 0.
$$
\end{thm}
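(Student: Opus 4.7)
My strategy is to combine the commuting vector field method built on the operators $J_{m_j}(t)=x+(it/m_j)\pa_x$ with $L^\infty$ decay estimates of Klainerman--Sobolev type, exploiting that in dimension $d\ge 3$ the free Schr\"odinger decay rate $t^{-d/2}$ is integrable at infinity. This extra integrability is exactly what removes the need for the null structure required by Theorem~\ref{Thm1} in two space dimensions; the mass-resonance relation \eqref{Mass3} is still essential because it is what makes the Leibniz rule for $J_{m_j}$ available on each component $F_j$ of the nonlinearity.

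\textbf{Main steps.} First, a standard Picard iteration yields local existence of $u\in C([0,T_*);\Sigma^s(\R^d))$; here $\Sigma^s$ is the natural data space because $J_{m_j}(0)=x$ and $J_{m_j}(t)$ commutes with the $j$-th linear Schr\"odinger operator. Second, I introduce the evolving energy
$$
E_s(t)^2=\sum_{j=1}^{3}\sum_{|\alpha|+|\beta|\le s}\|J_{m_j}(t)^\alpha \pa_x^\beta u_j(t)\|_{L^2(\R^d)}^2,
$$
which at $t=0$ is equivalent to $\|\varphi\|_{\Sigma^s(\R^d)}^2$. Applying $J_{m_j}^\alpha\pa_x^\beta$ to each $F_j$ and using the mass-resonance Leibniz rule (Lemma~\ref{Lem_leibniz}) for the $J$ operators together with the ordinary product rule for $\pa_x$ produces a sum of bilinear expressions $(J^{\alpha_1}\pa_x^{\beta_1'}u)(J^{\alpha_2}\pa_x^{\beta_2'}u)$ with $|\alpha_1|+|\alpha_2|=|\alpha|$, $|\beta_1|+|\beta_2|=|\beta|$ and $|\beta_i'|\le|\beta_i|+1$. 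Third, pairing $J^\alpha\pa_x^\beta F_j$ with $J^\alpha\pa_x^\beta \overline{u_j}$ and taking imaginary parts turns the system into a differential inequality for $E_s^2$; on its right-hand side I place the lower-order factor in $L^\infty$ via a Klainerman--Sobolev-type inequality of the shape
$$
\|v(t)\|_{L^\infty(\R^d)}\le C\,t^{-d/2}\sum_{|\gamma|\le[d/2]+1}\|J_m(t)^\gamma v(t)\|_{L^2(\R^d)},\qquad t\ge 1,
$$
and the higher-order factor in $L^2$. The regularity threshold $s\ge 2[d/2]+5$ is exactly what is needed so that the Sobolev embedding can act on the low factor while keeping the high factor within $E_s$.

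\textbf{Main obstacle and conclusion.} The principal difficulty, just as for Theorems~\ref{Thm1}--\ref{Thm2}, is the derivative loss: in certain terms of the Leibniz expansion the high-frequency factor carries $|\alpha|+|\beta|+1$ operators, one more than $E_s$ controls. I would resolve this by coupling the energy estimate with Kato-type smoothing estimates for each Schr\"odinger component, which recover half a derivative in a local space-time $L^2$ norm; placed against the decaying low-order factor, this absorbs the extra derivative and yields the differential inequality
$$
\frac{d}{dt}E_s(t)^2\le C\,t^{-d/2}E_s(t)^3\qquad (t\ge 1),
$$
with the short-time interval $[0,1]$ handled by the local existence theory. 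Since $\int_1^\infty t^{-d/2}\,dt<\infty$ for $d\ge 3$, a standard continuity argument closes the bootstrap for $\|\varphi\|_{\Sigma^s(\R^d)}$ sufficiently small, yielding the global solution. Finally, the asymptotic state $\varphi^+\in\Sigma^{s-1}(\R^d)$ is obtained by showing that $\{U_{m_j}(-t)u_j(t)\}_t$ is Cauchy in $\Sigma^{s-1}$: using the integral equation, the difference between times $t_1<t_2$ is bounded in $\Sigma^{s-1}$ by $\int_{t_1}^{t_2}\tau^{-d/2}E_s(\tau)^2\,d\tau$, which tends to $0$. The technically most delicate point is the construction of the correct smoothing norm and verification that it meshes with the Leibniz expansion derived from mass resonance; once that is in place, the rest of the argument is routine bookkeeping.
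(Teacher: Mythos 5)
Your overall strategy --- the vector fields $J_{m_j}$ with the mass-resonance Leibniz rule, the Klainerman--Sobolev decay $t^{-d/2}$, integrability of that rate for $d\ge 3$ in place of the null condition, smoothing to absorb the derivative loss, and a Cauchy-sequence argument for the free profile --- is exactly the paper's route (the paper proves Theorem~\ref{Thm3} by reusing the machinery of Theorems~\ref{Thm1}--\ref{Thm2} and noting that $\jb{t}^{-\frac{d}{2}+C\eps}\in L^1(0,\infty)$ for $d\ge3$). The genuine gap is in the quantitative heart of your plan: the single top-order differential inequality $\frac{d}{dt}E_s(t)^2\le C\,t^{-d/2}E_s(t)^3$ is not what the smoothing argument delivers. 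The gauge operator $S_{\pm}(t;\kappa)$ recovers the half derivative only through the commutator $[L_m,S]$, whose favorable term carries the weight $\kappa\jb{t}^{-1}$, and the same commutator leaves an unavoidable remainder of size $\kappa\jb{t}^{-1}\|S f\|_{L^2}^2$ with $\kappa\sim\eps$ (Lemmas~\ref{Lem_smoothing} and \ref{Lem_QLE}). That term is \emph{not} time-integrable, so at top order one can only conclude $\|u(t)\|_{\Gamma,s}\le C\eps(1+t)^{C\eps}$ (Lemma~\ref{Lem_est_high}), never a uniform bound; your one-norm bootstrap as stated does not close. The paper instead runs a two-tier argument: a loss-free standard energy estimate at order $s-1$, where after one Leibniz expansion the nonlinearity involves at most $s$ operators and is placed entirely in $L^2$ against an $L^\infty$ factor decaying like $t^{-d/2}$ (Lemma~\ref{Lem_est_low}(i)), paired with the slowly growing top norm; the resulting integrand $(1+t)^{-d/2+C\eps}$ is integrable precisely when $d\ge3$. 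This hierarchy is also why the hypothesis is $s\ge 2\left[\frac{d}{2}\right]+5$ rather than $2\left[\frac{d}{2}\right]+4$, and the scattering estimate must likewise be carried out at level $s-1$, where $\|F(t)\|_{\Gamma,s-1}\le C\eps^2\jb{t}^{-\frac{d}{2}+C\eps}\in L^1(0,\infty)$.

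A secondary but real issue: local existence cannot be obtained by ``standard Picard iteration'' in $\Sigma^s$, because the same derivative loss prevents the Duhamel map from being a contraction there. The paper constructs local solutions by parabolic regularization $L_{m_j,\nu}=L_{m_j}-i\nu\Delta$, uniform-in-$\nu$ bounds via the smoothing lemma, and a compactness/uniqueness argument (Lemma~\ref{Lem_local} and Appendix~A.3). If you replace your claimed top-order inequality by the paper's pair of estimates (bounded $E_{s-1}$, top norm growing like $(1+t)^{C\eps}$) and use the regularization scheme for local existence, the rest of your outline goes through.
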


\begin{rem} 
 In the case of $d=1$, 
 our approach does not imply any global nor almost global 
 existence results because of insufficiency of expected decay in $t$ of 
 the quadratic nonlinear term (see Remarks~\ref{1DRemark} 
 and \ref{Rmk_loc_1d} below  for the detail). 
 In the recent paper by Ozawa--Sunagawa \cite{OS}, 
 a small data blow-up result is obtained for a three-component system of 
 quadratic nonlinear Schr\"odinger equations in one space dimension. 
 More precisely, it is shown in \cite{OS} that we can  choose 
 $m_j$, $F_j$ and $\varphi_j$ with $\|\varphi\|_{\Sigma^s}=\eps$ 
 such that the corresponding solution to \eqref{Eq}--\eqref{Data} blows up 
 in finite time no matter how small $\eps>0$ is. 
 However, the  nonlinear term treated in \cite{OS} is different from 
 \eqref{Nonlin}.
 \\
\end{rem}

The rest of this paper is organized as follows. The next section is devoted to 
preliminaries on basic properties of the operator $J_m$. 
In Section~4, a characterization of the null condition will be given in terms 
of some special quadratic forms. As a consequence, extra-decay property of 
the nonlinear term satisfying the null condition under mass resonance will 
be made clear. 
In Section~5, we recall the smoothing property of linear Schr\"odinger 
equations. After that, the main theorems will be  proved in Section~6 
by means of {\it a priori} estimates. 
The Appendix is devoted to the proof of technical lemmas. 

In what follows, we denote several positive constants by the same 
letter $C$, which may vary from one line to another. 
Also we will frequently use the following convention on implicit constants: 
The expression $f=\sum_{\lambda \in \Lambda}' g_{\lambda}$ means that there 
exists a family $\{A_{\lambda}\}_{\lambda \in \Lambda}$ of constants such 
that $f=\sum_{\lambda \in \Lambda} A_{\lambda} g_{\lambda}$. 
For $z\in \R^d$ (or $z\in \R$), we write $\jb{z}=\sqrt{1+|z|^2}$.

\section{The operator $J_m$}
For a
non-zero real constant $m$, we set 
$L_m=i\pa_t+\frac{1}{2m}\Delta$ and 
$J_m(t)=\bigl(J_{m,a}(t)\bigr)_{a=1,\hdots,d}$ with $J_{m,a}(t)=x_a+\frac{it}{m}\pa_{x_a}$. 
Then we can easily check that 
\begin{align}  
 [L_m, \pa_{x_a}]=[L_m, J_{m,a}(t)]=0, \quad
 [J_{m,a}(t), \pa_{x_b}]=-\delta_{ab}, \quad 
 [J_{m,a}(t), J_{m,b}(t)]=0,
 \label{Comm}
\end{align}
where $[\cdot,\cdot]$ denotes the commutator of linear operators, and 
$\delta_{ab}$ is the Kronecker symbol, i.e.,
$$
 \delta_{ab}=\left\{
 \begin{array}{ll}
 1& (a=b),\\
 0& (a\ne b).
 \end{array}
 \right.
$$ 
We write 
$J_m(t)^{\alpha}=J_{m,1}(t)^{\alpha_{1}}\cdots J_{m,d}(t)^{\alpha_{d}}$ 
for a multi-index $\alpha=(\alpha_1,\hdots,\alpha_{d}) \in (\Z_+)^{d}$. 
Here and the later on as well, $\Z_+$ denotes the set of all non-negative 
integers. We also set
$$
 \Gamma_m(t)=(\pa_{x_1},\hdots,\pa_{x_d}, J_{m,1}(t),\hdots, J_{m,d}(t)),
$$
$$
 \Gamma_m(t)^{\alpha}
 =\pa_{x_1}^{\alpha_1}\cdots \pa_{x_d}^{\alpha_{d}}
  J_{m,1}(t)^{\alpha_{d+1}}\cdots J_{m,d}(t)^{\alpha_{2d}}
$$
for a multi-index $\alpha=(\alpha_1,\hdots,\alpha_{2d}) \in (\Z_+)^{2d}$. 
For simplicity of exposition, we often write $J_{m,a}$, $J_m$ and 
$\Gamma_{m}$ for $J_{m,a}(t)$, $J_m(t)$ and $\Gamma_{m}(t)$, 
respectively.

The following identities are quite useful: 
\begin{align}
 J_{m,a}(t) f 
  =
   \frac{it}{m} e^{im\theta} \pa_{x_a} (e^{-im\theta} f)
  =
   U_m(t) (x_a U_m(-t) f), 
   \label{Id_jm}
\end{align}
where 
$\theta=\frac{|x|^2}{2t}$
 and $U_m(t)=\exp(\frac{it}{2m}\Delta)$. 
Indeed, we can deduce the following lemmas from \eqref{Id_jm}.

\begin{lem}\label{Lem_kla}
We have
$$
 \|f\|_{L^{\infty}(\R^d)} \le \frac{C_m}{\jb{t}^{\frac{d}{2}} }
 \sum_{|\alpha| \le [\frac{d}{2}]+1}\|\Gamma_m(t)^{\alpha}f\|_{L^2(\R^d)}
$$
for $t\in \R$, 
where $C_m>0$ is independent of $t$. 
\end{lem}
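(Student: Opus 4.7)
The approach is to combine the conjugation identity \eqref{Id_jm} with the standard Sobolev embedding $H^{[d/2]+1}(\R^d) \hookrightarrow L^\infty(\R^d)$ via a rescaling that produces the dispersive decay factor $\jb{t}^{-d/2}$.

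First I would iterate \eqref{Id_jm} by induction on $|\alpha|$ to verify that, for $t \neq 0$,
\[
 J_m(t)^\alpha f = \left(\frac{it}{m}\right)^{|\alpha|} e^{im\theta}\,\pa_x^\alpha\!\left(e^{-im\theta} f\right), \qquad \theta=\frac{|x|^2}{2t},
\]
where the induction step only uses $e^{-im\theta}J_{m,a}f = (it/m)\pa_{x_a}(e^{-im\theta}f)$, itself a restatement of \eqref{Id_jm}. Setting $g := e^{-im\theta} f$, this gives $|g(x)|=|f(x)|$ pointwise and
\[
 \|\pa_x^\alpha g\|_{L^2(\R^d)} = \left(\frac{|m|}{|t|}\right)^{|\alpha|}\|J_m(t)^\alpha f\|_{L^2(\R^d)}.
\]

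For $|t| \le 1$, the standard Sobolev embedding applied directly to $f$ gives $\|f\|_{L^\infty} \le C\sum_{|\alpha|\le[d/2]+1}\|\pa_x^\alpha f\|_{L^2}$, which is acceptable because $\jb{t}^{d/2}$ is then bounded and $\pa_x^\alpha f$ already appears among the $\Gamma_m(t)^\alpha f$. For $|t| \ge 1$, I rescale by $\lambda := |t|/|m|$: defining $h(y) := g(\lambda y)$ and applying Sobolev to $h$, a change of variables yields $\|h\|_{L^\infty}=\|f\|_{L^\infty}$ and $\|\pa_y^\alpha h\|_{L^2} = \lambda^{|\alpha|-d/2}\|\pa_x^\alpha g\|_{L^2}$. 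The key point is that $\lambda$ is tuned so that $\lambda^{|\alpha|}\cdot(|m|/|t|)^{|\alpha|}=1$ for every $\alpha$, so the $|\alpha|$-dependent factors cancel term by term and only the Jacobian factor $\lambda^{-d/2}=(|m|/|t|)^{d/2}$ survives, giving exactly
\[
 \|f\|_{L^\infty} \le C\left(\frac{|m|}{|t|}\right)^{d/2}\sum_{|\alpha|\le[d/2]+1}\|J_m(t)^\alpha f\|_{L^2}.
\]

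Summing over $|\alpha|\le[d/2]+1$ and gluing the two regimes via $\jb{t}$ completes the proof, with a constant depending on $m$. There is no serious obstacle; the only subtlety is the observation that the rescaling parameter $\lambda = |t|/|m|$ is precisely what balances the $t$-dependence hidden in the commutator $[\pa_{x_a}, e^{-im\theta}]$, thereby turning the $e^{-im\theta}$ conjugation trick into genuine decay rather than just an algebraic identity.
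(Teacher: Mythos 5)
Your proof is correct, and it follows the same overall strategy as the paper: conjugate by $e^{-im\theta}$ via \eqref{Id_jm} to convert $J_m(t)^\alpha$ into $(it/m)^{|\alpha|}\pa_x^\alpha$ acting on $g=e^{-im\theta}f$, treat $|t|\le 1$ by the plain Sobolev embedding, and extract the $\jb{t}^{-d/2}$ decay for $|t|\ge 1$ from the fact that $|g|=|f|$ pointwise while the derivative norms of $g$ carry factors $(|m|/|t|)^{|\alpha|}$. The one place where you diverge is the mechanism for turning the inhomogeneous powers $(|m|/|t|)^{|\alpha|}$ into the uniform factor $(|m|/|t|)^{d/2}$: the paper applies the Gagliardo--Nirenberg--Sobolev interpolation inequality to $g$, so that only the fraction $d/(2\sigma)$ of the top-order factor $t^{-\sigma}$, $\sigma=[d/2]+1$, survives; you instead rescale $g$ by $\lambda=|t|/|m|$ and apply the ordinary (non-interpolated) Sobolev embedding, letting the Jacobian supply $\lambda^{-d/2}$ while the $|\alpha|$-dependent factors cancel term by term. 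The two devices are interchangeable here (scaling is essentially how one proves the interpolation inequality), and your version is marginally more elementary in that it avoids invoking the multiplicative form of the embedding; the paper's version avoids introducing the rescaled function. Either way the constant depends only on $m$ and the conclusion is the stated one.
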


\begin{proof}
We have only to consider the case of $t\ge 1$, because the opposite case 
easily follows from the standard Sobolev inequality. 
By the Gagliardo-Nirenberg-Sobolev inequality (see, e.g., \cite{Fr}), 
we have 
$$
 \|f\|_{L^{\infty}} 
 \le 
 C \|f\|_{L^2}^{1-\frac{d}{2\sigma}} 
 \biggl( 
  \sum_{|\alpha|= \sigma}\|\pa_x^{\alpha} f\|_{L^2}
 \biggr)^{\frac{d}{2\sigma}},
$$
where $\sigma= \left[\frac{d}{2}\right]+1$. 
Also we note that \eqref{Id_jm} yields
$$
 J_{m}^{\alpha}f
 = 
 \Bigl(\frac{it}{m}\Bigr)^{|\alpha|} 
 e^{im \theta} \pa_x^{\alpha} (e^{-im\theta}f)
$$
for any $\alpha \in (\Z_+)^d$. From them it follows that
\begin{align*}
  \|f\|_{L^{\infty}} 
  &= 
  \|e^{-im \theta} f\|_{L^{\infty}} 
  \\
  &\le 
  C \|e^{-im \theta}f\|_{L^2}^{1-\frac{d}{2\sigma}} 
 \biggl(
  \sum_{|\alpha|= \sigma}\|\pa_x^{\alpha} (e^{-im \theta} f)\|_{L^2}
 \biggr)^{\frac{d}{2\sigma}}
 \\
 &\le 
 C \|f\|_{L^2}^{1-\frac{d}{2\sigma}} 
 \biggl(
  |m|^\sigma t^{-\sigma}\sum_{|\alpha|= \sigma}\|J_{m}^{\alpha}f\|_{L^2}
 \biggr)^{\frac{d}{2\sigma}}
 \\
  &\le 
 C|m|^{\frac{d}{2}} t^{-\frac{d}{2}} \sum_{|\alpha|\le \sigma}\|J_{m}^{\alpha}f\|_{L^2}.
\end{align*}
This completes the proof.
\end{proof}

\begin{lem} \label{Lem_leibniz}
Let $m_1$, $m_2$, $m_3$ be 
non-zero real constants satisfying \eqref{Mass3}. 
Then we have 
\begin{align*} 
 &J_{m_1,a}(t)(\overline{f} g)
 =
  - \frac{m_2}{m_1}\bigl(\overline{ J_{m_2,a}(t) f} \bigr) g
  + \frac{m_3}{m_1} \overline{f} \bigl( J_{m_3,a}(t) g),\\
 &J_{m_2,a}(t)( f\overline{g})
 =
    \frac{m_3}{m_2} \bigl( J_{m_3,a}(t) f)\overline{g} 
  - \frac{m_1}{m_2} f\bigl(\overline{ J_{m_1,a}(t) g} \bigr),\\
 &J_{m_3,a}(t)(fg)
 =\frac{m_1}{m_3} \bigl( J_{m_1,a}(t) f \bigr) g 
  + \frac{m_2}{m_3}  f \bigl(J_{m_2,a}(t)g \bigr)   
\end{align*}
for $a\in \{1,\hdots,d\}$ and smooth functions $f$, $g$. 
\end{lem}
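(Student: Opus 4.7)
The three identities have the same flavor, so I would treat them uniformly and then note the first one in detail. The quickest route is a direct computation from the definition $J_{m,a}(t)f = x_a f + \frac{it}{m}\partial_{x_a}f$. For the first identity, expand
\[
J_{m_1,a}(t)(\overline{f}g) = x_a\overline{f}g + \frac{it}{m_1}\bigl((\partial_{x_a}\overline{f})g + \overline{f}(\partial_{x_a}g)\bigr),
\]
and similarly write out the right-hand side. The coefficients of $x_a\overline{f}g$ collapse to $(-m_2 + m_3)/m_1$, which equals $1$ precisely because of \eqref{Mass3}; the coefficients of $(\partial_{x_a}\overline{f})g$ and $\overline{f}(\partial_{x_a}g)$ both come out to $it/m_1$ automatically. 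So the whole lemma reduces to a single arithmetic identity $m_3 - m_2 = m_1$, applied in three cosmetically different ways.

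A more structural variant I would present uses the factored form in \eqref{Id_jm}, namely $J_{m,a}(t)f = \frac{it}{m}e^{im\theta}\partial_{x_a}(e^{-im\theta}f)$ with $\theta = |x|^2/(2t)$. Under \eqref{Mass3} one has the key algebraic identity
\[
e^{-im_1\theta}(\overline{f}g) = \overline{e^{-im_2\theta}f}\cdot e^{-im_3\theta}g,
\]
since $-m_1 = m_2 - m_3$. Applying the ordinary Leibniz rule for $\partial_{x_a}$ to the right-hand side and multiplying back by $\frac{it}{m_1}e^{im_1\theta}$, each factor of $e^{\pm im_k\theta}$ regroups with the corresponding derivative to form $\overline{J_{m_2,a}(t)f}$ and $J_{m_3,a}(t)g$, with the constants $-m_2/m_1$ and $m_3/m_1$ appearing as bookkeeping from the $\frac{it}{m}$ prefactors. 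The analogous identities for $J_{m_2,a}$ and $J_{m_3,a}$ follow by reassociating the exponentials in the same way (using $-m_2 = m_1 - m_3$ and $-m_3 = -m_1 - m_2$, respectively), together with the observation that $\overline{J_{m,a}(t) g} = x_a\overline{g} - \frac{it}{m}\partial_{x_a}\overline{g}$.

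There is no real obstacle; the only place care is needed is tracking the sign coming from complex conjugation (i.e.\ that $\overline{J_{m,a}(t)f}$ carries a $-it/m$ and not $+it/m$), which is what makes the minus signs appear in front of $m_2/m_1$ and $m_1/m_2$ in the first two identities but not in the third. I would present the second (exponential) proof because it makes transparent exactly where the mass resonance \eqref{Mass3} is used, and because it foreshadows the phase factor $e^{im_j|x|^2/(2t)}$ that already played a role in the heuristic derivation of the null condition in Section~2.
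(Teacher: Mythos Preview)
Your proposal is correct, and the approach you favor (using the factored form $J_{m,a}(t)f = \frac{it}{m}e^{im\theta}\partial_{x_a}(e^{-im\theta}f)$, the phase identity $e^{-im_1\theta}\overline{f}g = \overline{e^{-im_2\theta}f}\cdot e^{-im_3\theta}g$, and the ordinary Leibniz rule) is exactly the argument the paper gives. Your alternative direct expansion from the definition $J_{m,a}(t)f = x_a f + \frac{it}{m}\partial_{x_a}f$ is also valid and slightly more elementary, though the paper does not present it.
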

\begin{proof} 
Since $m_1=-m_2+m_3$, it follows from \eqref{Id_jm} that 
\begin{align*}
 J_{m_1,a}(\overline{f} g)
 &=
 \frac{it}{m_1} \overline{e^{i m_2 \theta}} e^{im_3 \theta}
  \pa_{x_a}\bigl\{ (\overline{e^{-im_2\theta}f}) (e^{-im_3\theta}g) \bigr\}\\
 &=
 \frac{it}{m_1} \Bigl\{
 \overline{e^{im_2\theta} \pa_{x_a} (e^{-im_2\theta}f)} g 
 +
 \overline{f} e^{im_3\theta} \pa_{x_a}(e^{-im_3\theta}g) \Bigr\}\\
 &=
 -\frac{m_2}{m_1} 
 \Bigl\{\overline{\frac{it}{m_2}e^{im_2\theta} \pa_{x_a} (e^{-im_2\theta}f)}
 \Bigr\} g 
 +\frac{m_3}{m_1} \, 
 \overline{f} \, 
 \Bigl\{\frac{it}{m_3}e^{im_3\theta} \pa_{x_a}(e^{-im_3\theta}g)\Bigr\} \\
 &=
 - \frac{m_2}{m_1}\bigl(\overline{ J_{m_2,a} f} \bigr) g
  + \frac{m_3}{m_1} \overline{f} \bigl( J_{m_3,a} g).
\end{align*}
The  other two identities can be shown in the same way. 
\end{proof}

\section{Characterization of the null condition}

Throughout this section, we fix 
non-zero real constants $m_1$, $m_2$, $m_3$
satisfying \eqref{Mass3}. 
Remember that our null condition depends on the masses. 
Let us introduce the following quadratic forms: 
\begin{align*} 
 &G_{1,a}(f,g)
   = m_2 \overline{f} (\pa_{x_a} g) + m_3 (\overline{\pa_{x_a} f}) g,\\
 &G_{2,a}(f,g)
   = m_3  f (\overline{\pa_{x_a} g}) + m_1 (\pa_{x_a} f) \overline{g},\\
 &G_{3,a}(f,g)= m_1 f (\pa_{x_a} g) - m_2  (\pa_{x_a} f) g
\end{align*}
and
$$
 Q_{ab}(f,g)=(\pa_{x_a} f)(\pa_{x_b} g)-(\pa_{x_b} f)(\pa_{x_a} g)
$$ 
for $a,b\in \{1,\hdots,d\}$. 
We call $G_{1,a}$, $G_{2,a}$, $G_{3,a}$  
{\it the null gauge forms} associated with the mass resonance relation 
\eqref{Mass3}, 
while $Q_{ab}$ is called {\it the strong null forms} 
(cf.~\cite{T1}, \cite{KT}, \cite{HK}, \cite{Geo90}). 

The objectives of this section are twofold: 
The first is to give an algebraic characterization of the  null condition 
in terms  of the null gauge forms and the strong null forms 
(Lemma~\ref{Lem_null_char}). 
The second is to investigate properties of these quadratic forms 
in connection with the operator $J_m$ (Lemmas~\ref{Lem_extra} and 
\ref{Lem_act_j}). 
As a consequence, extra-decay structure of $F$ satisfying 
the null condition under mass resonance will be made clear 
(Corollary~\ref{Cor_extra}).

We start with the following lemma. 

\begin{lem} \label{Lem_null_char}
The nonlinear term $F=(F_j)_{j=1,2,3}$ of the form \eqref{Nonlin} satisfies 
the null condition if and only if it can be written in the following form: 
\begin{align}
 &F_1(v,\pa_x v) =
  \mathop{{\;\,\sum}'}_{a=1}^{d} \, 
  G_{1,a}(v_2,v_3)
  +
  \mathop{{\;\,\sum}'}_{a,b=1}^{d} Q_{ab}  (\overline{v_2}, v_3),
 \label{Null_char}\\
 &F_2(v,\pa_x v) =
  \mathop{{\;\,\sum}'}_{a=1}^{d} \, 
  G_{2,a}(v_3,v_1)
  +
  \mathop{{\;\,\sum}'}_{a,b=1}^{d} Q_{ab}  (v_3, \overline{v_1}),
 \\
 &F_3(v,\pa_x v) =
  \mathop{{\;\,\sum}'}_{a=1}^{d} \, 
  G_{3,a}(v_1,v_2)
  +
  \mathop{{\;\,\sum}'}_{a,b=1}^{d} Q_{ab}  (v_1, v_2)
\end{align}
for any $\C^3$-valued smooth functions $v=(v_j)_{j=1,2,3}$. 
\end{lem}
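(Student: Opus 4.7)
The argument is essentially linear algebra: for each $j$, I would expand $F_j$ by the multi-indices $\alpha,\beta$ (each of which is either $0$ or a unit multi-index $e_a$), translate $p_j(\xi)\equiv 0$ into linear relations among the coefficients $\Co{j}{\alpha}{\beta}$, and match those relations to the coefficient patterns of $G_{j,a}$ and $Q_{ab}$. I plan to carry out the $F_1$ case in detail; $F_2$ and $F_3$ are strictly parallel. First I would expand
\begin{equation*}
F_1 = \Co{1}{0}{0}\overline{v_2}v_3 + \sum_{a=1}^{d}\Co{1}{e_a}{0}(\overline{\pa_{x_a}v_2})v_3 + \sum_{a=1}^{d}\Co{1}{0}{e_a}\overline{v_2}(\pa_{x_a}v_3) + \sum_{a,b=1}^{d}\Co{1}{e_a}{e_b}(\overline{\pa_{x_a}v_2})(\pa_{x_b}v_3),
\end{equation*}
and, using $\overline{im_j\xi_a} = -im_j\xi_a$ for $m_j,\xi_a\in\R$,
\begin{equation*}
p_1(\xi) = \Co{1}{0}{0} - im_2\sum_{a}\Co{1}{e_a}{0}\xi_a + im_3\sum_{a}\Co{1}{0}{e_a}\xi_a + m_2 m_3\sum_{a,b}\Co{1}{e_a}{e_b}\xi_a\xi_b.
\end{equation*}
Since $m_2,m_3\ne 0$, the vanishing of $p_1$ on $\R^d$ is equivalent (by the separate vanishing of the constant, linear, and quadratic homogeneous pieces in $\xi$) to the three conditions: $\Co{1}{0}{0}=0$; $m_3\Co{1}{0}{e_a}=m_2\Co{1}{e_a}{0}$ for every $a$; and the matrix $(\Co{1}{e_a}{e_b})_{a,b}$ is antisymmetric.

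For the ``only if'' direction, assume these three conditions. Set $A_a := \Co{1}{0}{e_a}/m_3 = \Co{1}{e_a}{0}/m_2$, which is well-defined by the linear relation. The definition of $G_{1,a}$ then makes $\sum_a A_a G_{1,a}(v_2,v_3)$ reproduce exactly the linear-in-derivatives part of $F_1$, while antisymmetry lets me rewrite the bilinear-in-derivatives part as $\tfrac12\sum_{a,b}\Co{1}{e_a}{e_b}Q_{ab}(\overline{v_2},v_3)$. Together with $\Co{1}{0}{0}=0$, this produces \eqref{Null_char}. The ``if'' direction is verified by substituting the definitions of $G_{1,a}$ and $Q_{ab}$ back into \eqref{Null_char}, reading off the resulting $\Co{1}{\alpha}{\beta}$, and checking the three conditions — equivalently, by a direct substitution showing that the symbol $p_1$ associated to each $G_{1,a}$ and each $Q_{ab}(\overline{v_2},v_3)$ vanishes identically.

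The cases $F_2$ and $F_3$ are handled by the same scheme, modulo obvious bookkeeping. For $F_3$, whose nonlinearity carries no conjugation, the linear condition becomes $m_1\Co{3}{e_a}{0} + m_2\Co{3}{0}{e_a} = 0$, matched precisely by the coefficient pattern $(-m_2,m_1)$ in $G_{3,a}(v_1,v_2) = m_1 v_1(\pa_{x_a}v_2) - m_2(\pa_{x_a}v_1)v_2$; for $F_2$, with conjugation on the second factor, the relation reads $m_3\Co{2}{e_a}{0} = m_1\Co{2}{0}{e_a}$, matched by $G_{2,a}$. The strong null forms $Q_{ab}$ absorb the antisymmetric quadratic-derivative part in each case. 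I expect no real mathematical obstacle; the only care needed is with the complex conjugations and the sign $\overline{im_j\xi_a} = -im_j\xi_a$. Note that the mass resonance \eqref{Mass3} plays no role in this lemma — it enters only later (via Lemma \ref{Lem_leibniz}) when the gauge forms interact with $J_m$.
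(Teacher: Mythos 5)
Your proof is correct and follows essentially the same route as the paper: expand $p_j(\xi)$, read off that the constant term vanishes, the linear coefficients are proportional to the mass pattern of $G_{j,a}$, and the derivative-derivative matrix is antisymmetric, then match these to the null gauge forms and strong null forms (and the paper, like you, does not use \eqref{Mass3} here). One small slip: from $m_3 C_{1,o,\iota(a)} = m_2 C_{1,\iota(a),o}$ the well-defined common value is $A_a = C_{1,o,\iota(a)}/m_2 = C_{1,\iota(a),o}/m_3$ (you swapped $m_2$ and $m_3$), but this is immaterial since the primed sums absorb arbitrary constants.
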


\begin{rem}
An analogous characterization of the null condition for quadratic nonlinear 
Klein-Gordon systems can be found in Proposition~{5.1} of \cite{KOS}. 
See also \cite{Side}, \cite{A} for related works on nonlinear elastic 
wave equations. 
\end{rem}

\begin{proof}
We will consider only $F_1$ because the others can be shown in the 
same way. We set $o=(0,\hdots,0)\in (\Z_+)^d$ and  
$\iota(a)=(0,\hdots,0,\stackrel{\tiny\mbox{$a$-th}}{1},0,\hdots,0)
\in (\Z_+)^d$ for $a\in \{1,\hdots,d\}$. 
By the definition of $p_1(\xi)$, we have 
\begin{align*}
&p_1(\xi)
=
\Co{1}{o}{o} 
+i\sum_{a=1}^{d} (m_3 \Co{1}{o}{\iota(a)}-m_2 \Co{1}{\iota(a)}{o}) \xi_a\\
&
+ m_2m_3 \sum_{a=1}^{d} \Co{1}{\iota(a)}{\iota(a)} {\xi_a}^2
+ m_2m_3\sum_{1\le a<b\le d} 
 (\Co{1}{\iota(a)}{\iota(b)} + \Co{1}{\iota(b)}{\iota(a)})\xi_a \xi_b,
\end{align*}
where $\Co{1}{\alpha}{\beta}$ is the constant appearing in \eqref{Nonlin}.
In order that this polynomial vanish identically on $\R^d$, we must have 
\begin{align*}
 \begin{array}{ll}
 \Co{1}{o}{o} = \Co{1}{\iota(a)}{\iota(a)} =0
 & \mbox{for $a\in\{1,\hdots,d\}$},\\
 m_3 \Co{1}{o}{\iota(a)}-m_2 \Co{1}{\iota(a)}{o}=0  
 & \mbox{for $a\in\{1,\hdots,d\}$},\\
 \Co{1}{\iota(a)}{\iota(b)} + \Co{1}{\iota(b)}{\iota(a)}=0
 & \mbox{for $a, b\in\{1,\hdots,d\}$ with $a\ne b$}.
  \end{array}  
\end{align*}
These identities imply that $F_1$ is of the form \eqref{Null_char}. 
It is easy to see that the converse is also true.
\end{proof}

Next we turn our attention to properties of the null gauge forms 
and the strong null forms. The following lemma asserts a gain 
of extra-decay in $t$ with the aid of $J_m$.

\begin{lem} \label{Lem_extra}
{\rm (i)}\ \  For $a \in \{1,\hdots,d\}$, we have 
\begin{align*}
 G_{1,a}(f,g)=&
 \frac{im_2m_3}{t}\Bigl\{
 (\overline{J_{m_2, a} f}) g - \overline{f} (J_{m_3, a}g) 
 \Bigr\}, \\
 G_{2,a}(f,g)=&
 \frac{im_3m_1}{t}\Bigl\{
  -(J_{m_3, a}f)\overline{g} +f(\overline{J_{m_1, a} g}) 
 \Bigr\}, \\
 G_{3,a}(f,g)=&
 \frac{i m_1m_2}{t}\Bigl\{
 (J_{m_1, a} f) g -{f} (J_{m_2, a}g)
 \Bigr\}.
\end{align*}
{\rm (ii)}\ \ 
For $a,b \in \{1,\hdots,d\}$ and for 
$m$, $\mu$ $\in \R\setminus \{0\}$, 
we have 
\begin{align*}
 Q_{a b}(f,g)
=&
 \frac{\mu}{it}
 \Bigl\{(\pa_{x_a} f)(J_{\mu,b}g)-(\pa_{x_b} f)(J_{\mu,a}g)  \Bigr\}\\
 &\ \ +
 \frac{m}{it}\Bigl\{(J_{m,a} f)(\pa_{x_b} g)-(J_{m,b} f)(\pa_{x_a} g) \Bigr\}\\
 &\ \ +
 \frac{m\mu}{t^2}
 \Bigl\{(J_{m,a} f)(J_{\mu,b}g)-(J_{m,b} f)(J_{\mu,a}g) \Bigr\},\\
 Q_{a b}(\overline{f},g)
=&
 \frac{\mu}{it}
 \Bigl\{
  (\overline{\pa_{x_a} f})(J_{\mu,b}g)- (\overline{\pa_{x_b} f})(J_{\mu,a}g) 
 \Bigr\}\\
 &\ \ -
 \frac{m}{it}
 \Bigl\{
  (\overline{J_{m,a} f})(\pa_{x_b} g)-(\overline{J_{m,b} f})(\pa_{x_a} g) 
 \Bigr\}\\
 &\ \ -
 \frac{m\mu}{t^2}
 \Bigl\{(\overline{J_{m,a} f})(J_{\mu,b}g)-(\overline{J_{m,b} f})(J_{\mu,a}g) \Bigr\}.
\end{align*}

\end{lem}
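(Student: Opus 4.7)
The plan is to derive both parts from the elementary rearrangement $\pa_{x_a}f = \frac{m}{it}\bigl(J_{m,a}(t)f - x_a f\bigr)$, obtained by solving the definition of $J_{m,a}(t)$ for $\pa_{x_a}$. Taking complex conjugates also gives $\overline{\pa_{x_a}f} = -\frac{m}{it}\bigl(\overline{J_{m,a}(t)f} - x_a\bar{f}\bigr)$. Everything else is bookkeeping.

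For part (i), I would apply the above to $\pa_{x_a}g$ (with $m=m_3$) and to $\overline{\pa_{x_a}f}$ (with $m=m_2$) and substitute into $G_{1,a}(f,g) = m_2\bar{f}(\pa_{x_a}g) + m_3(\overline{\pa_{x_a}f})g$. Each summand then splits into a ``$J$-piece'' with coefficient $\pm\frac{m_2 m_3}{it}$ and an ``$x$-piece'' of the form $\pm\frac{m_2 m_3 x_a}{it}\bar{f}g$; the weights and signs in the definition of $G_{1,a}$ have been chosen precisely so that the two $x$-pieces cancel. What remains is $-\frac{m_2m_3}{it}\{(\overline{J_{m_2,a}f})g - \bar{f}(J_{m_3,a}g)\} = \frac{im_2m_3}{t}\{(\overline{J_{m_2,a}f})g - \bar{f}(J_{m_3,a}g)\}$, as claimed. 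The other two identities follow from the same calculation with the masses cyclically permuted.

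For part (ii), I would apply the same substitution to every derivative in $Q_{ab}(f,g) = (\pa_{x_a}f)(\pa_{x_b}g) - (\pa_{x_b}f)(\pa_{x_a}g)$, using parameter $m$ on the $f$-side and $\mu$ on the $g$-side. The contribution $\frac{m\mu x_a x_b}{t^2}fg$ is symmetric in $(a,b)$ and hence cancels in the antisymmetric combination. To verify the claimed three-block decomposition without reorganizing the expansion from scratch, I would rather expand each of the three blocks on the right-hand side in turn: substituting $J_{m,a} = x_a + \frac{it}{m}\pa_{x_a}$ back in, each of the first two blocks equals $Q_{ab}(f,g)$ plus a specific $x$-dependent residual, while the third (pure $JJ$) block equals $-Q_{ab}(f,g)$ plus exactly those residuals with opposite signs. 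The residuals telescope and the three blocks sum to $Q_{ab}(f,g)$. The identity for $Q_{ab}(\bar{f},g)$ is proved identically, using the conjugated substitution, which flips the sign of the $\frac{it}{m}\pa_{x_a}$ part; this accounts for the extra minus signs in the second and third blocks of the statement.

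There is no real obstacle: the argument is purely algebraic, and the only care required is with the signs produced by complex conjugation and with verifying the cancellation of the $x$-dependent remainders. The conceptual content of the lemma is that the coefficients defining $G_{j,a}$ and the antisymmetric structure of $Q_{ab}$ are engineered precisely so that, once $\pa_x$ is written in terms of $J_m$, the $O(1)$ contributions proportional to $x/t$ vanish and a net factor of $1/t$ (respectively $1/t^2$, for the pure $JJ$ block in (ii)) emerges---exactly the gain exploited later in Corollary~\ref{Cor_extra}.
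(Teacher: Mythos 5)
Your proposal is correct and follows essentially the same route as the paper: part (i) is the substitution $\pa_{x_a}=\frac{im}{t}x_a-\frac{im}{t}J_{m,a}$ with the $x_a$-terms cancelling by the choice of coefficients, and part (ii) rests on the antisymmetric cancellation of the $x_ax_b$ contribution, which the paper phrases equivalently by substituting $mx_a=mJ_{m,a}-it\pa_{x_a}$ into the trivial identities $(mx_af)(\mu x_bg)-(mx_bf)(\mu x_ag)=0$ and its conjugated analogue. The sign bookkeeping in your part (i) checks out.
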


\begin{proof}
{\rm (i)} follows immediately from the relation 
$\pa_{x_a}=\frac{im}{t}x_a-\frac{im}{t} J_{m,a}$. 
To prove {\rm (ii)}, we just substitute $mx_a=m J_{m,a}-it \pa_{x_a}$ and 
$\mu x_b=\mu  J_{\mu,b}-it \pa_{x_b}$ into 
the identities 
\begin{align*}
&(mx_a f)(\mu x_b g)-(mx_b f)(\mu x_a g) =0,\\
&(\overline{mx_a f})(\mu x_b g)-(\overline{mx_b f})(\mu x_a g) =0.
\end{align*}
This completes the proof.
\end{proof}

As for the action of the operator $J_m$ on the null gauge forms and 
the strong null forms, we have the following:

\begin{lem} \label{Lem_act_j}
Assume that the mass resonance relation \eqref{Mass3} is satisfied. 
Then we have 
\begin{align}
 &J_{m_1}^{\alpha} G_{1,a}(f,g)
 =
 \mathop{{\;\,\sum}'}_{|\beta|+|\gamma|\le |\alpha|} \, 
  G_{1,a}(J_{m_2}^{\beta}f, J_{m_3}^{\gamma}g),
 \\
 &J_{m_2}^{\alpha} G_{2,a}(f,g)
 =
 \mathop{{\;\,\sum}'}_{|\beta|+|\gamma|\le |\alpha|} \, 
  G_{2,a}(J_{m_3}^{\beta}f, J_{m_1}^{\gamma}g),
 \\
 &J_{m_3}^{\alpha} G_{3,a}(f,g)
 =
 \mathop{{\;\,\sum}'}_{|\beta|+|\gamma|\le |\alpha|} \, 
  G_{3,a}(J_{m_1}^{\beta}f, J_{m_2}^{\gamma}g),
 \label{J_g}
\end{align}
and
\begin{align}
 J_{m_1}^{\alpha} Q_{ab}(\overline{f},g)
 =&
 \mathop{{\;\,\sum}'}_{|\beta|+|\gamma|\le |\alpha|} \, 
  Q_{ab}(\overline{J_{m_2}^{\beta}f}, J_{m_3}^{\gamma}g) 
 \nonumber\\
 &+
 \sum_{c=1}^{d}\mathop{{\;\,\sum}'}_{|\beta|+|\gamma|\le |\alpha|-1} \, 
  G_{1,c}(J_{m_2}^{\beta}f, J_{m_3}^{\gamma}g),
 \\
 J_{m_2}^{\alpha} Q_{ab}(f, \overline{g})
 =&
 \mathop{{\;\,\sum}'}_{|\beta|+|\gamma|\le |\alpha|} \, 
  Q_{ab}(J_{m_3}^{\beta}f, \overline{J_{m_1}^{\gamma}g}) 
 \nonumber\\
 &+
 \sum_{c=1}^{d}\mathop{{\;\,\sum}'}_{|\beta|+|\gamma|\le |\alpha|-1} \, 
  G_{2,c}(J_{m_3}^{\beta}f, J_{m_1}^{\gamma}g),
 \\
 J_{m_3}^{\alpha} Q_{ab}(f,g)
 =&
 \mathop{{\;\,\sum}'}_{|\beta|+|\gamma|\le |\alpha|} \, 
  Q_{ab}(J_{m_1}^{\beta}f, J_{m_2}^{\gamma}g) 
 \nonumber\\
 &+
 \sum_{c=1}^{d}\mathop{{\;\,\sum}'}_{|\beta|+|\gamma|\le |\alpha|-1} \, 
  G_{3,c}(J_{m_1}^{\beta}f, J_{m_2}^{\gamma}g)
 \label{J_q}
\end{align}
for any $\alpha \in (\Z_+)^{d}$ and $a,b \in \{1,\hdots,d\}$.
\end{lem}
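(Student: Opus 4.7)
The plan is to argue by induction on $|\alpha|$, exploiting the fact that each identity has a Leibniz-type structure: $J_{m_j}^\alpha$ applied to one of these bilinear forms produces the same bilinear form with $J$-derivatives distributed onto the factors, possibly up to a remainder living in the space of null gauge forms $G$. Since this structural shape is preserved under composition with a further $J_{m_j,c}$, it is enough to settle the case $|\alpha|=1$; all higher orders then follow by iteration, the $Q$-order incrementing by one at each step while the $G$-remainder order stays one behind.

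For the null-gauge identities, fix $\alpha = \iota(c)$ and expand $J_{m_1,c} G_{1,a}(f,g) = J_{m_1,c}\bigl[m_2 \overline{f}(\pa_{x_a}g) + m_3 (\overline{\pa_{x_a}f}) g\bigr]$ via Lemma~\ref{Lem_leibniz}. This puts $J_{m_2,c}$ or $J_{m_3,c}$ inside a $\pa_{x_a}$; using $[J_{m,c},\pa_{x_a}] = -\delta_{ac}$ from \eqref{Comm} to commute the $J$'s outside, one picks up residues which are scalar multiples of $\delta_{ac}\overline{f}g$. A direct matching of coefficients shows that these residues cancel \emph{exactly}---a cancellation that hinges on the identity $m_1 = -m_2 + m_3$ already built into Lemma~\ref{Lem_leibniz}---and the surviving terms reassemble as $-\tfrac{m_2}{m_1} G_{1,a}(J_{m_2,c}f, g) + \tfrac{m_3}{m_1} G_{1,a}(f, J_{m_3,c}g)$. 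The identities for $G_{2,a}$ and $G_{3,a}$ follow by the same mechanism.

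For the strong-null-form identities, I apply the same strategy to $Q_{ab}(\overline{f},g) = (\overline{\pa_{x_a}f})(\pa_{x_b}g) - (\overline{\pa_{x_b}f})(\pa_{x_a}g)$. The commutator residues are now not pure products but carry one $\pa_x$ on $f$ or on $g$; the key observation is that, after antisymmetrization in $(a,b)$, they bundle into $\tfrac{1}{m_1}\bigl[\delta_{ac} G_{1,b}(f,g) - \delta_{bc} G_{1,a}(f,g)\bigr]$, which is precisely a null-gauge-form remainder of $J$-order zero---consistent with the bound $|\beta|+|\gamma|\le|\alpha|-1$ on the $G$-remainder in the lemma. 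The two remaining $Q$-identities are symmetric. With both single-$J$ identities in hand, the induction closes by applying them termwise: acting by $J_{m_1,c}$ on a $Q$-term produces a new $Q$-term of higher $J$-order plus a $G$-remainder at the allowed order, while acting on an existing $G$-term produces only further $G$-terms via the first identity. The principal obstacle, and the only genuinely calculational point, is the simultaneous bookkeeping of these commutator residues: that they cancel outright on $G$-forms while regrouping as $G$-forms on $Q$-forms is the algebraic heart of the lemma and reflects the role of the mass resonance \eqref{Mass3}.
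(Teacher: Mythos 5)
Your proposal is correct and follows essentially the same route as the paper: reduce to $|\alpha|=1$, distribute $J_{m_j,c}$ over the factors via Lemma~\ref{Lem_leibniz}, commute the resulting $J$'s past the spatial derivatives using \eqref{Comm}, observe that the commutator residues cancel exactly on the null gauge forms and regroup as null gauge forms on the strong null forms, and then close by induction. Your explicit residue computations (exact cancellation of the $\delta_{ac}\overline{f}g$ terms for $G_{1,a}$, and the bundle $\frac{1}{m_1}\bigl[\delta_{ac}G_{1,b}(f,g)-\delta_{bc}G_{1,a}(f,g)\bigr]$ for $Q_{ab}(\overline{f},g)$) match the paper's calculation for the corresponding $G_{3,a}$ and $Q_{ab}(f,g)$ cases.
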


\begin{proof}
We will show only \eqref{J_g} and \eqref{J_q}, because the others can 
be shown in the same way. 
From  Lemma~\ref{Lem_leibniz} and the commutation relation \eqref{Comm}, 
it follows that
\begin{align*}
 J_{m_3,b}G_{3,a}(f,g)
 =&
 m_1\left\{
  \frac{m_1}{m_3} (J_{m_1,b} f) (\pa_{x_a} g)
  + \frac{m_2}{m_3} f ( J_{m_2,b} \pa_{x_a} g)
 \right\}\\
 &-
 m_2\left\{
  \frac{m_1}{m_3}(J_{m_1,b} \pa_{x_a} f ) g
  + \frac{m_2}{m_3} (\pa_{x_a} f) ( J_{m_2,b} g)
 \right\}\\
 =&
 \frac{m_1}{m_3}\Bigl\{
 m_1 (J_{m_1,b} f) (\pa_{x_a} g)
 -
 m_2 (J_{m_1,b} \pa_{x_a} f) g
 \Bigr\}\\
 &+
 \frac{m_2}{m_3} \Bigl\{
 m_1 f ( J_{m_2,b} \pa_{x_a} g)
 -
 m_2 (\pa_{x_a} f) ( J_{m_2,b} g)
 \Bigr\}\\
 =&
 \frac{m_1}{m_3}\Bigl\{
 G_{3,a}(J_{m_1,b} f, g)
 +
 m_2 \delta_{ba}  f g
 \Bigr\}\\
 &+
 \frac{m_2}{m_3} \Bigl\{
 G_{3,a} (f, J_{m_2,b}g)
 -
 m_1 f \delta_{ba} g
 \Bigr\}\\
 =&
  \frac{m_1}{m_3} G_{3,a}(J_{m_1,b} f, g)
 +
 \frac{m_2}{m_3}  G_{3,a} (f, J_{m_2,b}g).
\end{align*}
Similarly we have 
\begin{align*}
J_{m_3,c}Q_{ab}(f,g) =&
 \left\{
  \frac{m_1}{m_3} (J_{m_1,c} \pa_{x_a} f) (\pa_{x_b} g)
  +
  \frac{m_2}{m_3} (\pa_{x_a} f) (J_{m_2,c} \pa_{x_b} g)
 \right\}\\
 &\ \  - 
 \left\{
  \frac{m_1}{m_3} (J_{m_1,c} \pa_{x_b} f) (\pa_{x_a} g)
  +
  \frac{m_2}{m_3}(\pa_{x_b} f) (J_{m_2,c} \pa_{x_a} g)
 \right\}
\\
=&
 \frac{m_1}{m_3}\Bigl\{
 (J_{m_1,c} \pa_{x_a} f) (\pa_{x_b} g)-(J_{m_1,c} \pa_{x_b} f) (\pa_{x_a} g)
  \Bigr\}\\ 
 &\ \  + 
 \frac{m_2}{m_3} \Bigl\{
 (\pa_{x_a} f) (J_{m_2,c} \pa_{x_b} g)-(\pa_{x_b} f) (J_{m_2,c} \pa_{x_a} g)
  \Bigr\} \\
=&
 \frac{m_1}{m_3}\Bigl\{Q_{ab}(J_{m_1,c}f,g)
 -\delta_{ca}f(\pa_{x_b} g) + \delta_{cb}f(\pa_{x_a} g) 
  \Bigr\}\\ 
 &\ \  + 
 \frac{m_2}{m_3} \Bigl\{Q_{ab}(f,J_{m_2,c} g)
 -(\pa_{x_a} f) \delta_{cb} g + (\pa_{x_b} f) \delta_{ca} g
  \Bigr\} \\
 =&
 \frac{m_1}{m_3}Q_{ab}(J_{m_1,c}f,g)
 +
  \frac{m_2}{m_3} Q_{ab}(f,J_{m_2,c} g)
 +
 \frac{\delta_{cb}}{m_3} G_{3,a}(f,g)\\
 &\ \ -
 \frac{\delta_{ca}}{m_3} G_{3,b}(f,g).
\end{align*}
These equalities imply \eqref{J_g} and \eqref{J_q} with $|\alpha|=1$. 
By induction on $\alpha$, we arrive at the desired conclusion.
\end{proof}

\begin{cor} \label{Cor_extra}
Assume that the masses satisfy \eqref{Mass3} 
and that the nonlinear term $F=(F_j)_{j=1,2,3}$ of the form \eqref{Nonlin} satisfies the null condition. 
Then we have
$$
 \sum_{j=1}^{3}\sum_{|\alpha|\le s} |\Gamma_{m_j}^{\alpha}F_j(v,\pa_x v)|
 \le 
 \frac{C}{\jb{t}} 
 \left(
  \sum_{j=1}^{3}\sum_{|\alpha|\le \left[\frac{s}{2}\right]+1} 
  |\Gamma_{m_j}^{\alpha}v_j|
 \right)
 \left( 
  \sum_{j=1}^{3}\sum_{|\alpha|\le s+1} |\Gamma_{m_j}^{\alpha}v_j| 
 \right)
$$
for any $s \in \Z_+$ and $v=(v_j)_{j=1,2,3}$, 
where the positive constant $C$ is independent of $t$.
\end{cor}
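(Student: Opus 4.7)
The plan is to chain together the three preceding lemmas from this section. First, by Lemma~\ref{Lem_null_char}, the null condition allows writing each $F_j(v,\pa_x v)$ as a linear combination of null gauge forms $G_{j,a}(v_k,v_l)$ and strong null forms $Q_{ab}(v_k,v_l)$ (with appropriate conjugations and mass-index assignments dictated by \eqref{Nonlin} and \eqref{Null_char}). This reduces the claim to a pointwise estimate for $\Gamma_{m_j}^{\alpha}$ applied to each such bilinear form.

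Next, I would compute how $\Gamma_{m_j}^{\alpha}$ acts on these forms. The $J_m$-part of $\Gamma_{m_j}^{\alpha}$ is handled by Lemma~\ref{Lem_act_j}; the $\pa_x$-part is handled by the ordinary Leibniz rule, which preserves both $G_{j,a}$ and $Q_{ab}$ since each is bilinear in its arguments and their first derivatives. Combining these with the commutation relations \eqref{Comm}, one obtains schematically that $\Gamma_{m_j}^{\alpha}G_{j,a}(v_k,v_l)$ is a linear combination of $G_{j,a}(\Gamma_{m_k}^{\beta}v_k,\Gamma_{m_l}^{\gamma}v_l)$ with $|\beta|+|\gamma|\le|\alpha|$, while $\Gamma_{m_j}^{\alpha}Q_{ab}(v_k,v_l)$ is a linear combination of $Q_{ab}(\Gamma_{m_k}^{\beta}v_k,\Gamma_{m_l}^{\gamma}v_l)$ plus lower-order null gauge forms of the same type, again with $|\beta|+|\gamma|\le|\alpha|$. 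Commutator errors stemming from $[J_{m,a},\pa_{x_b}]=-\delta_{ab}$ are absorbed into the $G$-part of the decomposition.

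Third, I would invoke Lemma~\ref{Lem_extra} on each resulting $G$- and $Q$-form. Restricting to $t\ge 1$, every such form picks up an explicit factor of $1/t$ (using $1/t^2\le 1/t$ to dominate the last term in Lemma~\ref{Lem_extra}(ii)), at the cost of introducing at most one additional $J_m$-derivative on one of the two factors. This yields the pointwise bound
$$
 |\Gamma_{m_j}^{\alpha}F_j(v,\pa_x v)|
 \le
 \frac{C}{\jb{t}}
 \sum_{|\beta'|+|\gamma'|\le |\alpha|+1}
 |\Gamma_{m_k}^{\beta'}v_k|\,|\Gamma_{m_l}^{\gamma'}v_l|
$$
for all $|\alpha|\le s$. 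Since $|\beta'|+|\gamma'|\le s+1$, the smaller of the two indices is at most $[\frac{s+1}{2}]\le[\frac{s}{2}]+1$ and the other is at most $s+1$; assigning the smaller to the first factor and summing over $j,\alpha,k,l$ gives the claimed inequality. The case $t\le 1$ is immediate since $F$ is a quadratic expression in $v$ and $\pa_x v$, and $\jb{t}^{-1}$ is bounded below on this range.

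The main technical point I expect to be delicate is the derivative bookkeeping in the second step: verifying that the commutator errors from $[J_{m,a},\pa_{x_b}]=-\delta_{ab}$ genuinely fit into the null-form hierarchy, so that the full mixed operator $\Gamma_{m_j}^{\alpha}$ (not merely $J_{m_j}^{\alpha}$, as in Lemma~\ref{Lem_act_j}) preserves the null structure. Once this is established by induction on $|\alpha|$, the remainder of the argument is a direct substitution and re-indexing.
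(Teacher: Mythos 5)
Your proposal is correct and follows the same route as the paper: Lemma~\ref{Lem_null_char} to rewrite $F_j$ in terms of the forms $G_{j,a}$ and $Q_{ab}$, then Lemma~\ref{Lem_act_j} together with the ordinary Leibniz rule to commute $\Gamma_{m_j}^{\alpha}$ through them, and finally Lemma~\ref{Lem_extra} applied termwise to extract the factor $\jb{t}^{-1}$. The only cosmetic slip is the intermediate bound $|\beta'|+|\gamma'|\le|\alpha|+1$, which for the $Q_{ab}$ terms should read $|\alpha|+2$ (each factor gains at most one $\Gamma$), but since each index individually increases by at most one, your final re-indexing $\min\le\left[\frac{s}{2}\right]+1$, $\max\le s+1$ still goes through.
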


\begin{proof}
By Lemma~\ref{Lem_null_char}, Lemma~\ref{Lem_act_j} and 
the usual Leibniz rule, we have 
\begin{align*}
 &\Gamma_{m_1}^{\alpha}F_1(v,\pa_x v) \\
  &\ \ =
  \sum_{|\beta| + |\gamma| \le |\alpha|} \Bigl\{
  \mathop{{\;\,\sum}'}_{a=1}^{d} \, 
  G_{1,a}(\Gamma_{m_2}^{\beta}v_2,\Gamma_{m_3}^{\gamma}v_3)
  +
  \mathop{{\;\,\sum}'}_{a,b=1}^{d} 
  Q_{ab} (\overline{\Gamma_{m_2}^{\beta}v_2},\Gamma_{m_3}^{\gamma}v_3)
 \Bigr\},
 \\
 &\Gamma_{m_2}^{\alpha}F_2(v,\pa_x v) \\
 &\ \ =
  \sum_{|\beta| + |\gamma| \le |\alpha|} \Bigl\{
 \mathop{{\;\,\sum}'}_{a=1}^{d} \, 
  G_{2,a}(\Gamma_{m_3}^{\beta}v_3,\Gamma_{m_1}^{\gamma}v_1)
  +
  \mathop{{\;\,\sum}'}_{a,b=1}^{d} 
  Q_{ab}  (\Gamma_{m_3}^{\beta}v_3, \overline{\Gamma_{m_1}^{\gamma} v_1})
 \Bigr\},
 \\
 &\Gamma_{m_3}^{\alpha}F_3(v,\pa_x v) \\
 &\ \ =
  \sum_{|\beta| + |\gamma| \le |\alpha|} \Bigl\{
  \mathop{{\;\,\sum}'}_{a=1}^{d} \, 
  G_{3,a}(\Gamma_{m_1}^{\beta}v_1, \Gamma_{m_2}^{\gamma}v_2)
  +
  \mathop{{\;\,\sum}'}_{a,b=1}^{d} 
  Q_{ab}  (\Gamma_{m_1}^{\beta}v_1, \Gamma_{m_2}^{\gamma}v_2)
 \Bigr\}
\end{align*}
for any $\alpha \in (\Z_+)^{2d}$. 
To obtain the desired estimate, we have only to apply Lemma \ref{Lem_extra} 
for each terms on the right-hand sides of the above identities. 
\end{proof}

\section{Smoothing effect}
In this section, we recall smoothing properties of the linear Scr\"odinger 
equations. As we have mentioned in the introduction, the system \eqref{Eq} 
does not allow the standard energy estimate because the  nonlinearity contains 
the derivatives of the unknown. Smoothing effect is a useful device 
to overcome this difficulty. 
Among various versions  of smoothing  effects, 
we will mainly follow the approach of \cite{BHN} with a few modifications 
to fit our purpose (see also \cite{KPV}, \cite{Do}, \cite{Chi4}, etc., 
for the history and more information on this subject).

Let ${\mathcal S}(\R^d)$ be the space of rapidly decreasing functions,
and the Fourier transform of $f\in {\mathcal S}(\R^d)$ be given by
$$
\widehat{f}(\xi)\bigl(={\mathcal F}[f](\xi)\bigr)
=\frac{1}{(2\pi)^{d/2}}\int_{\R^d}e^{-i{y}\cdot\xi} f({y}) d{y}.
$$
For $a \in \{1,\hdots,d\}$,  we denote by $\mathcal{H}_a$ 
the Hilbert transform with respect to $x_a$, that is, 
$$
 (\mathcal{H}_a f)(x) 
 =
 \frac{1}{\pi}\mathrm{p.v.} \int_{\R} f(x-\tau \mathbf{1}_a)
 \frac{d\tau}{\tau}
$$
for $f\in {\mathcal S}(\R^d)$,
where $\mathbf{1}_a=(\delta_{a b})_{b=1,\hdots,d} \in \R^d$.
${\mathcal H}_a$ can be uniquely extended to a bounded linear 
operator on $L^2(\R^d)$ satisfying 
$\widehat{{\mathcal H}_a f}(\xi)=-i\sgn{\xi_a}\widehat{f}(\xi)$. 
With a parameter $\kappa \in (0,1]$, we put 
$$
{\Lambda}_{\kappa, a}(t,x)=\kappa \arctan
\left(\frac{x_a}{\jb{t}}\right)
$$ 
and 
$$
 S_{\pm, a}(t; \kappa) f
 = 
 \bigl(\cosh {\Lambda}_{\kappa, a}(t,\cdot) \bigr)f
 \mp 
 i \bigl(\sinh {\Lambda}_{\kappa, a}(t,\cdot) \bigr) \mathcal{H}_a f
$$
for $f\in L^2(\R^d)$ and $t\in \R$.
We define the  operators $S_{\pm}(t; \kappa)$ by 
$$
 S_{\pm}(t; \kappa)=\prod_{a=1}^{d} S_{\pm, a}(t; \kappa)
$$
for $t\in \R$.
Since 
$\|\bigl(\tanh {\Lambda}_{\kappa, a}(t,\cdot) \bigr)
{\mathcal H}_a\|_{L^2\to L^2}
\le \tanh(\pi/2)<1$, 
both $S_{\pm}(t; \kappa)$ and their inverse operators $S_{\pm}(t; \kappa)^{-1}$, which are given by
$$
S_{\pm}(t; \kappa)^{-1} f
=
\prod_{a=1}^d 
 \left(
  I\mp i \tanh {\Lambda}_{\kappa, a}(t,\cdot){\,\mathcal H}_a
 \right)^{-1}
\left(\frac{f}{\cosh {\Lambda}_{\kappa, a}(t,\cdot)}\right),
$$
are bounded operators on $L^2(\R^d)$ with 
the estimates 
\begin{equation}
\label{EstS}
\sup_{t\in \R, \kappa\in (0,1]}\|S_{\pm}(t; \kappa)\|_{L^2\to L^2}<\infty, \quad  
\sup_{t\in \R, \kappa\in (0,1]}\|S_{\pm}(t; \kappa)^{-1}\|_{L^2\to L^2}<\infty.
\end{equation}

Roughly speaking, if we put $S(t)=S_+(t; \kappa)$ when $m>0$ and 
$S(t)=S_-(t; \kappa)$ when $m<0$, 
then the operator $S(t)$ is expected to satisfy 
$$
 [L_m, S(t)] \simeq 
 -\frac{i\kappa}{|m|\jb{t}} 
 \sum_{a=1}^{d} w_a^2(t,\cdot)  S(t)|\pa_{x_a}| +\mbox{remainder},
$$  
where $|\pa_{x_a}|$ is interpreted as the Fourier multiplier and 
$$
 w_a(t,x)=\jbf{\frac{x_a}{\jb{t}}}^{-1}=\biggl(1+\frac{x_a^{2}}{1+t^2}\biggr)^{-\frac{1}{2}}.
$$
This enables us to recover a half-derivative of the solution 
(Lemma~\ref{Lem_smoothing}). 
By combining with auxiliary estimates (Lemma~\ref{Lem_aux}), 
we can get rid of the worst contribution of the nonlinear term 
(Lemma~\ref{Lem_QLE}). 

\begin{lem} \label{Lem_smoothing}
Let  
$m \in \R\setminus \{0\}$, $\kappa \in (0,1]$, $t_0\in\R$, and $T>0$. 
Put 
$S(t)=S_+(t;\kappa)$ when $m>0$ and $S(t)=S_-(t;\kappa)$ when $m<0$.
We have 
\begin{align*}
& \|S(t)f(t)\|_{L^2}^2 
 +
 \int_{t_0}^t \frac{\kappa}{|m|\jb{\tau}}
 \sum_{a=1}^{d} 
\Bigl\| w_a(\tau) S(\tau)|\pa_{x_a}|^{\frac{1}{2}}f(\tau) \Bigr\|_{L^2}^2 d\tau\\
&\le 
 \|S(t_0)f(t_0)\|_{L^2}^2+\int_{t_0}^t\biggl( 
 \frac{C\kappa}{\jb{\tau}} \|S(\tau)f(\tau)\|_{L^2}^2
 +
 2\left|\Jb{S(\tau)f(\tau), S(\tau) L_mf(\tau)}_{L^2}\right| \biggr)d\tau
\end{align*}
for $t\in [t_0, t_0+T]$ and $f\in C\bigl([t_0, t_0+T]; H^2(\R^d)\bigr)\cap C^1\bigl([t_0, t_0+T]; L^2(\R^d)\bigr)$,
where the constant $C$ is independent of $\kappa \in (0,1]$, $T>0$ and $t_0\in \R$. 
\end{lem}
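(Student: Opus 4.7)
The plan is to differentiate $\|S(t)f(t)\|_{L^2}^2$ in time, extract a smoothing gain from the commutator $[\Delta,S(t)]$, and integrate. The sign convention $S=S_+$ for $m>0$ vs.\ $S=S_-$ for $m<0$ is exactly what makes this gain favorably signed. Using $\partial_t f=-iL_m f+\frac{i}{2m}\Delta f$ together with $\mathrm{Im}\jb{\Delta(Sf),Sf}_{L^2}=0$ (from the self-adjointness of $\Delta$), a routine manipulation gives
\[
\frac{d}{dt}\|Sf\|_{L^2}^2
= 2\mathrm{Re}\jb{(\partial_tS)f,Sf}_{L^2}
+ 2\mathrm{Im}\jb{SL_m f,Sf}_{L^2}
+ \frac{1}{m}\mathrm{Im}\jb{[\Delta,S]f,Sf}_{L^2}.
\]
Since $|\partial_t\Lambda_{\kappa,a}|\le C\kappa/\jb{t}$, the first term is $O(\kappa\jb{t}^{-1}\|Sf\|_{L^2}^2)$ after using \eqref{EstS} to bound $\|f\|_{L^2}\lesssim\|Sf\|_{L^2}$, and the second is bounded by $2|\jb{Sf,SL_m f}_{L^2}|$. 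Both are already in the form required by the target inequality.

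The core is the computation of $[\Delta,S]$. Because the $S_{\pm,a}$'s for distinct $a$ commute with each other and with $\partial_{x_b}^2$ for $b\ne a$, it suffices to analyze the one-variable commutators
\[
[\partial_{x_a}^2,S_{\pm,a}]=\partial_{x_a}T_a+T_a\partial_{x_a},\qquad
T_a := (\partial_{x_a}\Lambda_{\kappa,a})\bigl(\sinh\Lambda_{\kappa,a}\mp i\cosh\Lambda_{\kappa,a}\mathcal{H}_a\bigr).
\]
Using the Fourier-multiplier identities $\mathcal{H}_a\partial_{x_a}=|\partial_{x_a}|$ and $\mathcal{H}_a|\partial_{x_a}|=-\partial_{x_a}$, the principal first-order contributions combine, via the algebraic identity $\cosh\Lambda_{\kappa,a}|\partial_{x_a}|\pm i\sinh\Lambda_{\kappa,a}\partial_{x_a}=S_{\pm,a}|\partial_{x_a}|$, into
\[
\frac{1}{2m}[\Delta,S]f
=-\frac{i\kappa}{|m|\jb{t}}\sum_{a=1}^d w_a(t,\cdot)^2\,S|\partial_{x_a}|f+R(t)f,
\]
where $\partial_{x_a}\Lambda_{\kappa,a}=\kappa w_a^2/\jb{t}$ has been used and $R(t)$ is an $L^2$-bounded operator of norm $O(\kappa/\jb{t}^2)$, collecting the lower-order contributions of $\partial_{x_a}^2\Lambda_{\kappa,a}$ and $(\partial_{x_a}\Lambda_{\kappa,a})^2$. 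The selection of $S_\pm$ according to $\mathrm{sgn}\,m$ is precisely what turns the $\mp$ in $T_a$ into the favorable $-i/|m|$ here rather than $+i/|m|$; with the opposite choice the extracted term would have the wrong sign.

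Substituting back, the third term of the energy identity equals $-\frac{2\kappa}{|m|\jb{t}}\sum_a\mathrm{Re}\jb{w_a^2 S|\partial_{x_a}|f,Sf}_{L^2}$ up to an $O(\kappa\jb{t}^{-1}\|Sf\|_{L^2}^2)$ error. Writing $|\partial_{x_a}|=|\partial_{x_a}|^{1/2}|\partial_{x_a}|^{1/2}$ and moving one factor across the pairing by self-adjointness gives
\[
\mathrm{Re}\jb{w_a^2 S|\partial_{x_a}|f,Sf}_{L^2}
=\|w_a S|\partial_{x_a}|^{1/2}f\|_{L^2}^2
+\mathrm{Re}\jb{|\partial_{x_a}|^{1/2}f,[|\partial_{x_a}|^{1/2},S^*w_a^2 S]f}_{L^2}.
\]
The commutator $[|\partial_{x_a}|^{1/2},S^*w_a^2S]$ is $L^2$-bounded with a $\jb{t}^{-1}$-decaying norm by standard commutator calculus, since $\partial_{x_a}w_a^2$ and $\partial_{x_a}\sinh\Lambda_{\kappa,a}$ are each $O(\jb{t}^{-1})$ in $L^\infty$. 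Combined with \eqref{EstS}, this error can be absorbed into the $C\kappa\jb{t}^{-1}\|Sf\|_{L^2}^2$ term. Integrating from $t_0$ to $t$ produces the claimed inequality.

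The main technical obstacle is the final absorption step: showing that the fractional commutator $[|\partial_{x_a}|^{1/2},S^*w_a^2S]$ is genuinely $L^2$-bounded with a sufficiently small norm, uniformly in $\kappa\in(0,1]$. The commutator expansion in Step 2 is essentially algebraic, but the sign-tracking—namely that the $S_\pm$ choice is dictated by $\mathrm{sgn}\,m$—is the structural input that converts the identity into a usable \emph{a priori} estimate.
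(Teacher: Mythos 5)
Your overall strategy is the same as the paper's: differentiate $\|Sf\|_{L^2}^2$, compute $[\Delta,S]$, use $\pa_{x_a}\Lambda_{\kappa,a}=\kappa w_a^2/\jb{t}$ and the sign convention to extract the negatively-signed term $-\frac{2\kappa}{|m|\jb{t}}\sum_a\realpart\jb{w_a^2S|\pa_{x_a}|f,Sf}_{L^2}$, and then identify this with the smoothing norm modulo absorbable errors. Your first three steps (the energy identity, the computation $[\pa_{x_a}^2,S_{\pm,a}]=\pa_{x_a}T_a+T_a\pa_{x_a}$, and the recombination $\cosh\Lambda_{\kappa,a}|\pa_{x_a}|\pm i\sinh\Lambda_{\kappa,a}\pa_{x_a}=S_{\pm,a}|\pa_{x_a}|$) are correct and match the paper.

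The gap is in the final absorption step. Your identity
$\realpart\jb{w_a^2S|\pa_{x_a}|f,Sf}_{L^2}=\|w_aS|\pa_{x_a}|^{1/2}f\|_{L^2}^2+\realpart\jb{|\pa_{x_a}|^{1/2}f,[|\pa_{x_a}|^{1/2},S^*w_a^2S]f}_{L^2}$
is algebraically valid, but the error term cannot be absorbed as you claim. Mere $L^2$-boundedness of the commutator (which is all that Lemma~\ref{BeHaNa} gives, and with norm $O(\|g\|_\infty^{1/2}\|\pa_{x_a}g\|_\infty^{1/2})=O(\jb{t}^{-1/2})$, not $O(\jb{t}^{-1})$) bounds it only by $\||\pa_{x_a}|^{1/2}f\|_{L^2}\cdot C\jb{t}^{-1/2}\|f\|_{L^2}$, and the factor $\||\pa_{x_a}|^{1/2}f\|_{L^2}$ is controlled neither by $\|Sf\|_{L^2}$ nor by the dissipative quantity $\|w_aS|\pa_{x_a}|^{1/2}f\|_{L^2}$, since the weight $w_a=\jb{x_a/\jb{t}}^{-1}$ degenerates at spatial infinity. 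To close this you would need a genuinely sharper Calder\'on-type estimate showing that $[|\pa_{x_a}|^{1/2},S^*w_a^2S]$ gains half a derivative (maps $H^{-1/2}$ to $L^2$ uniformly in $\kappa$), which you neither state nor prove. The paper avoids the issue by a different rearrangement: it writes $w_aS|\pa_{x_a}|f=\pa_{x_a}(w_aS\mathcal{H}_af)+[w_aS,\pa_{x_a}]\mathcal{H}_af$ and uses $\pa_{x_a}=-|\pa_{x_a}|^{1/2}\mathcal{H}_a|\pa_{x_a}|^{1/2}$, so that after moving the outer $|\pa_{x_a}|^{1/2}$ onto the second slot $w_aSf$, every occurrence of a half-derivative is either exactly $w_aS|\pa_{x_a}|^{1/2}f$ or sits inside a commutator with $w_aS$ applied to $f$ or $\mathcal{H}_af$; a bare $|\pa_{x_a}|^{1/2}f$ never appears, and the cross terms are then absorbed by Cauchy--Schwarz into the dissipation. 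You should replace your final step with this arrangement (or supply the stronger commutator estimate).
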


\begin{lem}  \label{Lem_aux}
%
%
%
%
%
Let $m \in \R\setminus \{0\}$ and $\kappa\in (0,1]$. 
Let $S$, $S'$ be either $S_+(t;\kappa)$ or $S_-(t;\kappa)$.
We have 
\begin{align*}
 &\left| \Jb{S f, S(g \pa_{x_a} h)}_{L^2}   \right|
  +
  \left| \Jb{S f, S (g \overline{\pa_{x_a} h})}_{L^2} \right|\\
&\le
  \frac{C}{\jb{t}^{\frac{d}{2}}}
 \left(
  \sum_{|\alpha|\le [\frac{d}{2}]+3}\|\Gamma_m^{\alpha} g \|_{L^2} 
 \right) \left( 
   \|f\|_{L^2} 
   +
   \left\| w_a(t) S|\pa_{x_a}|^{\frac{1}{2}}f \right\|_{L^2}
 \right) \\
 &\qquad \times 
 \left(  
   \|h\|_{L^2} 
   +
   \left\| w_a(t) S'|\pa_{x_a}|^{\frac{1}{2}}h \right\|_{L^2}
 \right),
\end{align*}
where the constant $C$ is independent of $\kappa \in (0,1]$ and $t \in \R$. 
\end{lem}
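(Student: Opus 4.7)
The plan is to symmetrize the single $\pa_{x_a}$-derivative across the inner product by means of the factorization $\pa_{x_a}=-|\pa_{x_a}|^{1/2}\,\mathcal{H}_a\,|\pa_{x_a}|^{1/2}$, moving one half-derivative onto $f$ by duality; after the smoothing operators are commuted past multiplication by $g$, the main contribution becomes a pairing of $|\pa_{x_a}|^{1/2}Sf$ against $|\pa_{x_a}|^{1/2}Sh$ with $\bar g$ sitting as a coefficient. The weights $w_a$ enter because one inserts $w_a\cdot w_a^{-1}$ inside the coefficient: the $w_a$ attaches to each half-derivative factor to produce the $L^2$-norms appearing on the right-hand side, while $w_a^{-1}$ is absorbed into the $L^\infty$-bound on $g$ via the estimate $w_a^{-1}\le 1+|x_a|/\jb{t}$.

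Concretely, I would first use the product structure $S=\prod_{b=1}^{d}S_{\pm,b}(t;\kappa)$ together with the uniform bounds in \eqref{EstS} to reduce to an analysis involving essentially only $S_{\pm,a}$, since the factors with $b\ne a$ commute with $\pa_{x_a}$, $\mathcal{H}_a$, and with multiplication by functions of $x_a$ alone. Expanding $S_{\pm,a}=\cosh\Lambda_{\kappa,a}\pm i\sinh\Lambda_{\kappa,a}\,\mathcal{H}_a$, I would then separate the principal contribution---in which exactly one half-derivative lands on each of $f$ and $h$---from the error terms coming from $[S_{\pm,a},g]$, $[|\pa_{x_a}|^{1/2},g]$, and from derivatives of $\Lambda_{\kappa,a}$. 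Commutators with $g$ are handled by Calder\'on-type estimates that effectively replace $g$ by $\pa_{x_a}g$ in $L^\infty$, while derivatives of $\Lambda_{\kappa,a}$ produce harmless factors $\kappa w_a^2/\jb{t}$.

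The principal term, using that $|\pa_{x_a}|^{1/2}$ is self-adjoint and $\mathcal{H}_a$ is skew-adjoint on $L^2$, takes the schematic form
$$
 \pm\bigl\langle w_a\mathcal{H}_a|\pa_{x_a}|^{1/2}Sf,\ w_a^{-1}\bar g\,S'\,|\pa_{x_a}|^{1/2}h\bigr\rangle_{L^2}.
$$
The first factor has $L^2$-norm controlled by $\|w_a S|\pa_{x_a}|^{1/2}f\|_{L^2}$ up to the boundedness of $\mathcal{H}_a$; for the second factor one places $w_a^{-1}\bar g$ in $L^\infty$. Since $w_a^{-1}\le 1+|x_a|/\jb{t}$, this reduces to bounding $g$ and $x_a g/\jb{t}$ in $L^\infty$. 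The identity $x_a=J_{m,a}-i(t/m)\pa_{x_a}$ converts the latter into $L^\infty$-bounds of $\Gamma_m g$, and then Lemma~\ref{Lem_kla} yields the $\jb{t}^{-d/2}$ decay at the cost of $[\frac{d}{2}]+1$ derivatives, with the remaining derivatives in the final count $[\frac{d}{2}]+3$ absorbed by the commutator terms. The conjugated case $S(g\overline{\pa_{x_a}h})$ is handled in the same way, the only algebraic difference being $\overline{\mathcal{H}_a h}=-\mathcal{H}_a\bar h$, which merely flips the sign in $S_{\pm,a}$.

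The main obstacle is the commutator bookkeeping: each rearrangement produces several error terms, and one must verify that every remainder actually meets the $\jb{t}^{-d/2}$ decay with the stated derivative budget on $g$, while being vigilant that no step silently costs a full derivative on $f$ or $h$ rather than a half-derivative equipped with the weight $w_a$.
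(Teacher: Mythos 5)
Your overall strategy coincides with the paper's: split $\pa_{x_a}$ into two half-derivatives through $\mathcal{H}_a$, insert the weights $w_a$, control all commutators by the Calder\'on-type estimate of Lemma~\ref{BeHaNa}, and convert the resulting weighted $L^\infty$-norms of $g$ into $\jb{t}^{-d/2}\sum_{|\alpha|\le[\frac{d}{2}]+3}\|\Gamma_m^\alpha g\|_{L^2}$ via $x_a=J_{m,a}-\frac{it}{m}\pa_{x_a}$ and Lemma~\ref{Lem_kla}. However, your principal term does not close as written. In the pairing $\pm\jbf{w_a\mathcal{H}_a|\pa_{x_a}|^{\frac12}Sf,\ w_a^{-1}\overline{g}\,S'|\pa_{x_a}|^{\frac12}h}_{L^2}$ you put $w_a^{-1}\overline{g}$ in $L^\infty$ and are left with $\|S'|\pa_{x_a}|^{\frac12}h\|_{L^2}$, i.e.\ the \emph{unweighted} half-derivative of $h$. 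This is not controlled by the right-hand side of the lemma: since $w_a$ degenerates like $\jb{t}/|x_a|$ for large $|x_a|$, the norm $\||\pa_{x_a}|^{\frac12}h\|_{L^2}$ is genuinely stronger than $\|h\|_{L^2}+\|w_aS'|\pa_{x_a}|^{\frac12}h\|_{L^2}$, and in the application only the weighted norm is recovered from the dissipative term in Lemma~\ref{Lem_smoothing}. Both half-derivative factors must carry the weight, which forces \emph{two} powers of $w_a^{-1}$ onto the coefficient, i.e.\ $\|w_a^{-2}g\|_{L^\infty}=\|\jb{x_a/\jb{t}}^{2}g\|_{L^\infty}$. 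This is precisely where the derivative budget comes from: two powers of $x_a/\jb{t}$ (or one power together with one $\pa_{x_a}g$ from the commutators) plus the $[\frac{d}{2}]+1$ of Lemma~\ref{Lem_kla} give exactly $[\frac{d}{2}]+3$. Your count, based on a single power of $x_a$ with the remainder ``absorbed by the commutator terms,'' is accounting for the uncorrected version of the main term.

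A secondary point: the factors $S_{\pm,b}$ with $b\ne a$ commute with $\pa_{x_a}$, $\mathcal{H}_a$ and with functions of $x_a$ alone, but not with multiplication by $g(x)$, which depends on all variables; so the proposed reduction to $S_{\pm,a}$ generates additional Calder\'on commutators $[\mathcal{H}_b,g]$ for every $b\ne a$. The paper avoids this entirely by never commuting $S$ past $g$: it writes $\jbf{Sf,S(g\pa_{x_a}h)}_{L^2}=\jbf{\overline{g}S^*Sf,\pa_{x_a}h}_{L^2}$, uses $\pa_{x_a}h=(\pa_{x_a}w_a^{-1})w_ah-w_a^{-1}|\pa_{x_a}|^{\frac12}\mathcal{H}_a|\pa_{x_a}|^{\frac12}(w_ah)$, and then only commutes $|\pa_{x_a}|^{\frac12}$ and $w_a^{\pm1}$ past $S$, $S'$ and $S^*$, for which Lemma~\ref{BeHaNa} applied to the single factor $S_{\pm,a}$ suffices. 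If you repair the weight bookkeeping along these lines, your argument becomes the paper's proof.
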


We will give a sketch of the proof of these lemmas in the Appendix. \\

Let $m_j\in \R\setminus\{0\}$ for $j\in\{1,2,3\}$, $t_0\in \R$ and $T>0$.
We consider the system
\begin{equation}
\label{QLS}
L_{m_j}v_j
=
\sum_{k=1}^3 \sum_{a=1}^d 
 \left(
  {\Phi}_{jk, a} \pa_{x_a}v_k + {\Psi}_{jk,a}\pa_{x_a} \overline{v_k}
 \right)
 +
 {\Theta}_j,
\quad (t,x)\in (t_0,t_0+T)\times \R^d
\end{equation}
for $j=1,2,3$ with given functions $\Phi_{jk,a}$, $\Psi_{jk,a}$ and $\Theta_j$. 
For $s\in \Z_+$ and an interval $I$, {we denote by} 
${\mathcal B}^s(I\times \R^d)$ the space of functions of $C^s$-class on 
$I\times\R^d$ with bounded derivatives of order up to $s$. 

\begin{lem}\label{Lem_QLE} 
Let $d\ge 2$, and let ${\lambda}_{jk}, \mu_{jk}\in \R\setminus\{0\}$ 
be given. Suppose that 
$$
 {\Phi}_{jk,a}, {\Psi}_{jk,a}
  \in {\mathcal B}^1([t_0, t_0+T]\times \R^d)
      \cap 
      C\left([t_0,t_0+T];\Sigma^{\left[\frac{d}{2}\right]+3}(\R^d)\right),
$$
and
${\Theta}=({\Theta}_j)_{j=1,2,3}\in 
L^1\bigl(t_0,t_0+T; L^2(\R^d)\bigr)$. 
Let $v\in C\bigl([t_0,t_0+T]; L^2(\R^d)\bigr)$ satisfy \eqref{QLS}.
There is a positive constant $\delta$ such that if
\begin{align*}
 &e_{t_0,T} :=\\
 &\sup_{t\in [t_0,t_0+T]} 
  \sum_{j,k=1}^3\sum_{a=1}^d\sum_{|\alpha|\le [\frac{d}{2}]+3}
  \left(
      \|\Gamma_{\lambda_{jk}}(t)^\alpha {\Phi}_{jk,a}(t)\|_{L^2}
      +
      \|\Gamma_{\mu_{jk}}(t)^\alpha {\Psi}_{jk,a}(t)\|_{L^2}
 \right)\\
 &\le \delta,
\end{align*}
then we have
\begin{align*}
 \|v(t)\|_{L^2}^2
 \le & 
 C \|v(t_0)\|_{L^2}^2
 +
 {C} \int_{t_0}^t \left(
    \frac{e_{t_0,T}}{\jb{\tau}}\|v(\tau)\|_{L^2}^2
    +
    \|v(\tau)\|_{L^2}\|{\Theta}(\tau)\|_{L^2}
  \right) d\tau 
\end{align*}
for $t\in [t_0, t_0+T]$. Here the positive constants $\delta$ and $C$ depend 
only on $|m_j|$, $|{\lambda}_{jk}|$ and $|\mu_{jk}|$. 
In particular, they are independent of $t_0$ and $T$.
\end{lem}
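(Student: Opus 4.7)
The plan is to combine the smoothing estimate of Lemma~\ref{Lem_smoothing}, which yields a positive weighted half-derivative quantity on its left-hand side, with the bilinear bound of Lemma~\ref{Lem_aux}, which produces precisely the matching quantity on the right. A standard mollification in $x$ allows one to assume $v$ is smooth enough to apply Lemma~\ref{Lem_smoothing}; the general case follows by density.

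Fix $\kappa=1$ and, for each $j\in\{1,2,3\}$, set $S_j(t):=S_+(t;1)$ if $m_j>0$ and $S_j(t):=S_-(t;1)$ if $m_j<0$. Abbreviate $A_{j,a}(\tau):=\|w_a(\tau)S_j(\tau)|\pa_{x_a}|^{1/2}v_j(\tau)\|_{L^2}$. Applying Lemma~\ref{Lem_smoothing} to each $v_j$ gives
\begin{align*}
 &\|S_j v_j(t)\|_{L^2}^2+\int_{t_0}^t\frac{1}{|m_j|\jb{\tau}}\sum_{a=1}^d A_{j,a}(\tau)^2\,d\tau\\
 &\quad\le\|S_j v_j(t_0)\|_{L^2}^2+\int_{t_0}^t\Bigl(\frac{C}{\jb{\tau}}\|S_jv_j\|_{L^2}^2+2\bigl|\Jb{S_jv_j,S_jL_{m_j}v_j}_{L^2}\bigr|\Bigr)d\tau.
\end{align*}
Substituting \eqref{QLS} for $L_{m_j}v_j$, the $\Theta_j$-contribution is bounded by $C\|v_j\|_{L^2}\|\Theta_j\|_{L^2}$ via \eqref{EstS}. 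Each nonlinear term is estimated by Lemma~\ref{Lem_aux} with $f=v_j$, $h=v_k$, $S=S_j$, $S'=S_k$, and mass $\lambda_{jk}$ (for $g=\Phi_{jk,a}$) or $\mu_{jk}$ (for $g=\Psi_{jk,a}$). Since $d\ge 2$ yields $\jb{\tau}^{-d/2}\le\jb{\tau}^{-1}$, this gives
$$\bigl|\Jb{S_jv_j,S_j(\Phi_{jk,a}\pa_{x_a}v_k)}_{L^2}\bigr|+\bigl|\Jb{S_jv_j,S_j(\Psi_{jk,a}\pa_{x_a}\overline{v_k})}_{L^2}\bigr|\le\frac{Ce_{t_0,T}}{\jb{\tau}}\bigl(\|v_j\|_{L^2}+A_{j,a}\bigr)\bigl(\|v_k\|_{L^2}+A_{k,a}\bigr).$$

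Expanding these products and applying $PQ\le\tfrac12(P^2+Q^2)$ controls the right-hand side by a finite sum of terms of the form $(Ce_{t_0,T}/\jb{\tau})\|v_\cdot\|_{L^2}^2$ and $(Ce_{t_0,T}/\jb{\tau})A_{\cdot,a}^2$. Choosing $\delta$ so small that $C\delta\cdot\max_j|m_j|$ is less than, say, $\tfrac12$, every $A^2$-term is absorbed into the positive smoothing integral on the left. Summing over $j\in\{1,2,3\}$, using \eqref{EstS} to replace $\|S_jv_j\|_{L^2}^2$ by a quantity comparable to $\|v_j\|_{L^2}^2$, and discarding the residual non-negative smoothing term yields the claimed inequality. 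The main difficulty is the scale matching: the decay $\jb{t}^{-d/2}$ of Lemma~\ref{Lem_aux} must be at least as fast as the weight $\jb{\tau}^{-1}$ from the smoothing gain, which forces $d\ge 2$ (for $d=2$ the two rates coincide exactly, and for $d\ge 3$ the surplus decay is pocketed by the Gronwall loop); moreover, $\delta$ must be fixed uniformly in $t_0,T,\kappa$ purely in terms of $|m_j|,|\lambda_{jk}|,|\mu_{jk}|$, which requires careful bookkeeping of the mass-dependence of every implicit constant arising in Lemmas~\ref{Lem_smoothing} and~\ref{Lem_aux}.
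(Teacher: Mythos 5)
Your overall architecture (smoothing estimate plus bilinear bound plus absorption) is the right one, but the choice $\kappa=1$ creates a genuine gap: it loses the factor $e_{t_0,T}$ in front of the $\jb{\tau}^{-1}\|v\|_{L^2}^2$ term, and that factor is the whole point of the lemma. Lemma~\ref{Lem_smoothing} produces on its right-hand side the term $\int \frac{C\kappa}{\jb{\tau}}\|S(\tau)f(\tau)\|_{L^2}^2\,d\tau$. With $\kappa=1$ this is $\frac{C}{\jb{\tau}}\|v_j\|_{L^2}^2$ with a constant \emph{not} proportional to $e_{t_0,T}$, so after Gronwall you only obtain $\|v(t)\|_{L^2}^2\le C\|v(t_0)\|_{L^2}^2(1+t)^{C}+\cdots$ with a fixed exponent $C$, rather than the asserted bound with the prefactor $e_{t_0,T}/\jb{\tau}$. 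In the application (Lemma~\ref{Lem_est_high}) the exponent of $(1+t)$ must be $O(E_{s-1}(T))$, i.e.\ small with the data; a fixed power of $t$ would destroy the integrability $\jb{t}^{-3/2+C\eps}$ used in the proof of Theorem~\ref{Thm1}. So your argument proves a strictly weaker statement than the lemma claims.

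The paper's resolution is to \emph{tune} $\kappa$ rather than fix it: after bounding the nonlinear pairings by Lemma~\ref{Lem_aux} with a constant $C_*$ independent of $\kappa$ (this independence is why $\kappa$ can be chosen afterwards), one sets $\kappa:=C_*e_{t_0,T}$ and $\delta:=1/C_*$, treating the trivial case $e_{t_0,T}=0$ separately by the standard energy estimate. Then the bad half-derivative contributions, of size $\frac{C_*e_{t_0,T}}{|m_j|\jb{\tau}^{d/2}}\|w_aS_j|\pa_{x_a}|^{1/2}v_j\|_{L^2}^2$, are cancelled \emph{exactly} by the smoothing gain $\frac{\kappa}{|m_j|\jb{\tau}}\|w_aS_j|\pa_{x_a}|^{1/2}v_j\|_{L^2}^2$ (using $d\ge2$ so that $\jb{\tau}^{-d/2}\le\jb{\tau}^{-1}$), while the term $\frac{C\kappa}{\jb{\tau}}\|S_jv_j\|_{L^2}^2$ automatically becomes $\frac{CC_*e_{t_0,T}}{\jb{\tau}}\|S_jv_j\|_{L^2}^2$, which is of the required form. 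Your absorption-by-smallness step for the $A_{j,a}^2$ terms is fine as far as it goes, but you must also carry the factor $e_{t_0,T}$ through the zeroth-order term, and that forces $\kappa\sim e_{t_0,T}$. A secondary point: the regularization should be done as in the paper by mollifying in $t$ and $x$ using the hypothesis $\Phi_{jk,a},\Psi_{jk,a}\in\mathcal{B}^1$, since $v$ is only assumed continuous with values in $L^2$ and a density argument in $x$ alone does not justify applying Lemma~\ref{Lem_smoothing}, which requires $f\in C([t_0,t_0+T];H^2)\cap C^1([t_0,t_0+T];L^2)$.
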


\begin{proof}
Here we prove the estimate only for the case where 
$${\Theta}\in C\bigl([t_0,t_0+T]; L^2(\R^d)\bigr)$$ 
and 
$$v \in C\bigl([t_0,t_0+T]; H^2(\R^d)\bigr)
      \cap C^1\bigl([t_0,t_0+T]; L^2(\R^d)\bigr).$$ 
The general case follows from the standard argument of mollifiers (in time and 
space variables), where we use the assumption that the coefficients belong to 
${\mathcal B}^1\bigl([t_0,t_0+T]\times \R^d\bigr)$ (see \cite{Miz} for 
instance).

We may assume $e_{t_0,T}>0$, because the standard energy estimate gives the 
desired result if $e_{t_0,T}=0$. 
For each $j\in\{1,2,3\}$, we put $S_j(t)=S_+(t;\kappa)$ if $m_j>0$, and 
$S_j(t)=S_-(t;\kappa)$ if $m_j<0$, where the parameter $\kappa\in (0,1]$ 
will be fixed later. By Lemma~\ref{Lem_aux}, we get
\begin{align*}
2&\sum_{j=1}^{3}\left|\jb{S_j(\tau)v_j(\tau), 
       S_j(\tau)L_{m_j}v_j(\tau)}_{L^2}\right|\\
&\le  
{C_*} \sum_{j=1}^3\sum_{a=1}^d
  \frac{e_{t_0,T}}{|m_j|\jb{\tau}^{\frac{d}{2}}} 
 \left(
  \left\|w_aS_{j}(\tau)|\pa_{x_a}|^{\frac{1}{2}}v_{j}(\tau)\right\|_{L^2}^2
  + 
  \|v_{j}(\tau)\|_{L^2}^2
 \right)\\
&\quad{}+
  2\sum_{j=1}^{3}
   \|S_j(\tau)v_j(\tau)\|_{L^2} \|S_j(\tau)\Theta_j(\tau)\|_{L^2}
\end{align*}
with some positive constant $C_*$ depending only on $|m_j|$, $|\lambda_{jk}|$ 
and 
$|\mu_{jk}|$ (in particular, $C_*$ is independent of $\kappa$).
Note that we have $\|v_k(t)\|_{L^2}\le C\|S_k(t)v_k(t)\|_{L^2}$ 
by \eqref{EstS}. 
We put 
$$
\kappa:=C_*e_{t_0,T},\quad \delta=\frac{1}{C_*}.
$$
If $0<e_{t_0,T}\le \delta$, we have $0<\kappa\le 1$ and 
it follows from Lemma~\ref{Lem_smoothing} that
\begin{align}
&\sum_{j=1}^3\|S_j(t)v_j(t)\|_{L^2}^2
 \nonumber\\
&\le  
-\sum_{j=1}^3
  \int_{t_0}^t
   \frac{\kappa}{|m_j|\jb{\tau}}\sum_{a=1}^d
   \left\|w_aS_j(\tau)|\pa_{x_a}|^{\frac{1}{2}}v_j(\tau)\right\|_{L^2}^2 
  d\tau
 +
 \sum_{j=1}^3\|S_j(t_0) v_j(t_0)\|_{L^2}^2
 \nonumber\\
&\quad {}+\sum_{j=1}^3
  \int_{t_0}^t
   \left(
     \frac{C\kappa}{\jb{\tau}}\|S_j(\tau)v_j(\tau)\|_{L^2}^2
     +
     2\left|
      \jb{S_j(\tau)v_j(\tau), S_j(\tau)L_{m_j}v_j(\tau)}_{L^2}
     \right|
   \right)
  d\tau
\nonumber\\
&\le  
  \sum_{j=1}^3\int_{t_0}^t
   \frac{C_*e_{t_0,T}-\kappa}{|m_j|\jb{\tau}}
   \sum_{a=1}^d
     \left\|
       w_aS_j(\tau)|\pa_{x_a}|^{\frac{1}{2}}v_j(\tau)
     \right\|_{L^2}^2 
  d\tau
  +
  \sum_{j=1}^3\|S_j(t_0)v_j(t_0)\|_{L^2}^2
\nonumber\\
&\quad {}+
\sum_{j=1}^3\int_{t_0}^t
  \left\{ 
  \frac{Ce_{t_0,T}}{\jb{\tau}}
  \bigl( \|S_j(\tau)v_j(\tau)\|_{L^2}^2 + \|v_j(\tau)\|_{L^2}^2 \bigr)
 +
  2\|S_j(\tau)v_j(\tau)\|_{L^2}\|S_j(\tau)\Theta_j(\tau)\|_{L^2}
  \right\} 
 d\tau
 \nonumber\\
 &\le 
 \ \sum_{j=1}^3\|S_j(t_0)v_j(t_0)\|_{L^2}^2
 +
 C \int_{t_0}^t
  \left(\frac{e_{t_0,T}}{\jb{\tau}}\|v(\tau)\|_{L^2}^2
  +
   \|v(\tau)\|_{L^2}\|\Theta(\tau)\|_{L^2}\right)
 d\tau
\label{Est_S_v}
\end{align}
for $t\in [t_0,t_0+T]$, since $d\ge 2$.
Recalling \eqref{EstS} again, we obtain the desired result.
\end{proof}

\begin{rem}\label{1DRemark}
When $d=1$, we need to assume that 
$\left(\sup_{t\in[t_0,t_0+T]}\jb{t}\right)^{1/2}\, e_{t_0,T}$ 
is sufficiently small 
in order that the above proof works, because 
$\jb{\tau}^{-1/2} \ge \jb{\tau}^{-1}$.
\end{rem}

\section{Proof of the theorems}

Throughout this section, we always suppose that the mass resonance relation 
\eqref{Mass3}, as well as \eqref{Nonlin}, is satisfied. 
For $v:\R^d \ni x \mapsto (v_j(x))_{j=1,2,3}\in \C^3$ and $s \in \Z_+$, 
we set 
$$
 |v(x)|_{\Gamma(t),s}
 =
 \left(
  \sum_{j=1}^{3} \sum_{|\alpha|\le s} |\Gamma_{m_j}(t)^{\alpha} v_j(x)|^2
 \right)^{1/2}
$$
and
$$
 \|v\|_{\Gamma(t),s}=\bigl\| |v(\cdot)|_{\Gamma(t), s} \bigr\|_{L^2(\R^d)}.
$$
Note that $\|v\|_{\Gamma(t),s}$ and 
$\sum_{j=1}^{3}\|U_{m_j}(-t) v_j\|_{\Sigma^s}$ are equivalent as a norm, where 
$U_m(t)=\exp\left(\frac{it}{2m}\Delta\right)$ for $m\in \R\setminus\{0\}$. 
Indeed, \eqref{Id_jm} implies 
$$
\pa^{\alpha}J_{m_j}(t)^{\beta}v_j=U_{m_j}(t)\pa^{\alpha}\bigl( x^{\beta}
U_{m_j}(-t)v_j\bigr), \quad \alpha,\beta\in (\Z_+)^d. 
$$
Moreover, for any bounded interval $I(\subset\R)$ and $s\in \Z_+$, there is 
a positive constant $C_{I,s}$ such that 
$$
\frac{1}{C_{I, s}} \|v\|_{\Gamma(t),s}
\le 
 \|v\|_{\Sigma^s}
\le 
 C_{I,s} \|v\|_{\Gamma(t),s},
\quad t\in I.
$$
In fact, given $m\in \R\setminus\{0\}$, there are polynomials 
$p^{\alpha,m}_{\beta,\gamma}$ and $q^{\alpha,m}_{\beta,\gamma}$
of $t\in \R$ such that we have
$$
\Gamma_m(t)^\alpha f(x)=\sum_{|\beta|+|\gamma|\le |\alpha|}
p_{\beta,\gamma}^{\alpha, m}(t)\pa_x^\beta\bigl(x^\gamma f(x)\bigr),
\quad \alpha\in (\Z_+)^{2d},
$$
and
$$
\pa_x^\beta\bigl(x^\gamma f(x)\bigr)
=\sum_{|\alpha| \le |\beta|+|\gamma|}
q_{\beta,\gamma}^{\alpha, m}(t)\Gamma_m(t)^\alpha f(x),
\quad \beta,\gamma\in (\Z_+)^d
$$
for any sufficiently smooth function $f$.
In the sequel, we often write $|u(t, x)|_{\Gamma, s}$ and 
$\|u(t)\|_{\Gamma,s}$ for $|u(t, x)|_{\Gamma(t),s}$ and 
$\|u(t,\cdot)\|_{\Gamma(t),s}$, respectively.
We also write 
$u^{(\alpha)}=\bigl(u_{j}^{(\alpha)}\bigr)_{j=1,2,3}$, 
$F^{(\alpha)}=\bigl(F_{j}^{(\alpha)}\bigr)_{j=1,2,3}$ 
with 
$u_j^{(\alpha)}= \Gamma_{m_j}^{\alpha}u_j$, 
$F_j^{(\alpha)}=\Gamma_{m_j}^{\alpha}\bigl(F_j(u,\pa_x u)\bigr)$ 
for the simplicity of the exposition.
 
First we state the local existence result for \eqref{Eq} 
(considered for $t>t_0$)
with the initial condition 
\begin{align}
 u_j(t_0,x)=\psi_j(x),
 \quad  x\in \R^d,\ j=1,2,3,
\label{Data2}
\end{align}
with some $t_0\in \R$, instead of \eqref{Data}.
For the convenience of the readers, we will give an outline of the proof 
in the Appendix. 

\begin{lem}\label{Lem_local}
Let $d\ge 2$, $t_0\in \R$, and $B\ge 0$. 
For any $s\ge 2\left[\frac{d}{2}\right]+4$, there is a positive constant 
$\delta_s$, which is independent of $t_0$ and $B$, such that
for any $\psi=(\psi_j)_{j=1,2,3}\in \Sigma^s(\R^d)$ with
$$
\|\psi\|_{\Gamma(t_0),\left[\frac{d}{2}\right]+4}\le \eps <\delta_s
$$
and $\|\psi\|_{\Gamma(t_0),2\left[\frac{d}{2}\right]+4}\le B$,
the initial value problem \eqref{Eq}--\eqref{Data2} possesses a unique 
solution $u\in C\bigl([t_0, t_0+T_s^*]; \Sigma^s(\R^d)\bigr)$ with 
\begin{equation}
\label{DS3}
 \sup_{t \in[t_0, t_0+T_s^*]}
 \|u(t)\|_{\Gamma, \left[\frac{d}{2}\right]+4}<\delta_s,
\end{equation}
where $T_s^*=T_s^*(\eps, B)$ is a positive constant which can be determined 
only by $s$, $\eps$ and $B$, and is independent of $t_0$.
\end{lem}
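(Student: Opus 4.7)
The plan is a Picard-type iteration adapted to the quasi-linear energy framework of Lemma~\ref{Lem_QLE}. Set $u^{0}\equiv 0$, and given $u^n=(u_j^n)$, define $u^{n+1}=(u_j^{n+1})$ as the solution of the linear Schr\"odinger problem
$$L_{m_j}u_j^{n+1}=F_j(u^n,\pa_x u^n),\qquad u_j^{n+1}(t_0)=\psi_j,$$
obtained via Duhamel's formula. Convergence rests on (a)~a uniform a~priori bound on $\|u^n\|_{\Gamma,s}$ through Lemma~\ref{Lem_QLE}, and (b)~a low-regularity difference estimate.

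For (a), fix $|\alpha|\le s$, commute $\Gamma_{m_j}^\alpha$ with $L_{m_j}$ through \eqref{Comm}, and use the Leibniz identities of Lemma~\ref{Lem_leibniz} to write
$$L_{m_j}\bigl(\Gamma_{m_j}^\alpha u_j^{n+1}\bigr)=\sum_{k=1}^{3}\sum_{a=1}^{d}\bigl(\Phi_{jk,a}^{n,\alpha}\pa_{x_a}\Gamma_{m_k}^\alpha u_k^{n+1}+\Psi_{jk,a}^{n,\alpha}\pa_{x_a}\overline{\Gamma_{m_k}^\alpha u_k^{n+1}}\bigr)+\Theta_j^{n,\alpha},$$
where the coefficients $\Phi^{n,\alpha},\Psi^{n,\alpha}$ are built from $u^n$ carrying at most one extra $\Gamma$ or $\pa_x$ (the extremal Leibniz term in which all $|\alpha|$ copies of $\Gamma$ land on a single factor), and are therefore controlled in $\Sigma^{[\frac{d}{2}]+3}$ by $\|u^n\|_{\Gamma,[\frac{d}{2}]+4}$; the remainder $\Theta^{n,\alpha}$ collects all terms in which each factor receives at most $|\alpha|-1$ copies of $\Gamma$, for which Lemma~\ref{Lem_kla} gives the pointwise bound $\|\Theta^{n,\alpha}(t)\|_{L^2}\le C\|u^n\|_{\Gamma,[\frac{d}{2}]+3}\|u^n\|_{\Gamma,s}$. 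Applying Lemma~\ref{Lem_QLE} with $e_{t_0,T}\le C\|u^n\|_{\Gamma,[\frac{d}{2}]+4}$ and summing over $|\alpha|\le s$, a Gr\"onwall argument propagates the bootstrap assumptions $\|u^n\|_{\Gamma,[\frac{d}{2}]+4}<\delta_s$ and $\|u^n\|_{\Gamma,s}\le 2CB$ from step $n$ to step $n+1$, provided $\delta_s$ is small enough and $T_s^{*}=T_s^{*}(\eps,B)>0$ is chosen small accordingly. Every constant involved, as well as the $\jb{\tau}^{-1}$ weights appearing in Lemma~\ref{Lem_QLE}, is invariant under time translation, so $T_s^{*}$ depends only on $\eps$, $B$, $s$ and the masses, not on $t_0$.

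For (b), the difference $v^n=u^{n+1}-u^n$ solves a linear system of exactly the form \eqref{QLS} whose coefficients are built out of the (already bounded) iterates $u^n$ and $u^{n-1}$, with a source quadratic in $v^{n-1}$; Lemma~\ref{Lem_QLE} applied at zero order yields
$$\|v^n(t)\|_{L^2}^{2}\le C\int_{t_0}^{t}\|v^{n-1}(\tau)\|_{L^2}^{2}\,d\tau,$$
so $\{u^n\}$ is Cauchy in $C([t_0,t_0+T_s^{*}];L^2(\R^d))$. Interpolating against the uniform $\Sigma^s$-bound of step~(a) upgrades this to strong convergence in $\Sigma^{s'}$ for every $s'<s$, which suffices to pass to the limit in the nonlinearity; weak-$*$ compactness places the limit $u$ in $L^\infty([t_0,t_0+T_s^{*}];\Sigma^s(\R^d))$, and a standard Bona--Smith regularization recovers continuity in time with values in $\Sigma^s$. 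Uniqueness follows from the same $L^2$-difference argument applied to any two solutions of \eqref{Eq}.

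The main obstacle is the derivative loss inherent in the quasi-linear nonlinearity: a naive iteration in $\Sigma^s$ cannot be closed, because the Leibniz expansion of $\Gamma^\alpha F_j$ contains terms carrying one more derivative than the iterate itself on a single factor. The point of isolating these terms inside the principal part $\Phi\,\pa v+\Psi\,\pa\overline{v}$ of \eqref{QLS}, rather than burying them in $\Theta$, is precisely that Lemma~\ref{Lem_QLE} can absorb them via the smoothing gain $\jb{\tau}^{-1}\|w_aS|\pa_{x_a}|^{1/2}v\|_{L^2}^{2}$; getting the book-keeping of this decomposition and the bootstrap on $\|u^n\|_{\Gamma,[\frac{d}{2}]+4}$ to match the smallness threshold $\delta$ of Lemma~\ref{Lem_QLE} is the technical crux.
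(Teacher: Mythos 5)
There is a genuine gap at the heart of step~(a), and it is precisely the derivative-loss issue you correctly identify as the crux. With the iteration you define, $L_{m_j}u_j^{n+1}=F_j(u^n,\pa_x u^n)$, the right-hand side depends only on $u^n$; applying $\Gamma_{m_j}^\alpha$ and the Leibniz rules of Lemma~\ref{Lem_leibniz} therefore produces top-order terms of the form $\Phi^{n}_{jk,a}\,\pa_{x_a}\Gamma_{m_k}^\alpha u_k^{n}$ — the extra derivative falls on the \emph{previous} iterate $u^n$, not on the unknown $u^{n+1}$ as written in your displayed equation. Lemma~\ref{Lem_QLE} gains half a derivative only for the solution $v$ of the linear system \eqref{QLS}, i.e.\ for $u^{n+1}$; it cannot absorb a source term containing $\pa_{x_a}\Gamma^\alpha u^n$, so closing the estimate at level $s$ requires $\|u^n\|_{\Gamma,s+1}$, and the iteration loses one derivative per step. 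If, to avoid this, you instead put the top-order derivative on $u^{n+1}$, then the linear problem at each step is a variable-coefficient Schr\"odinger equation with first-order terms, whose solvability in $\Sigma^s$ is itself the nontrivial point (such equations are in general not even $L^2$ well-posed without structural hypotheses) and is certainly not supplied by ``Duhamel's formula'' for the free flow. Either way the scheme as stated does not close.

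The paper resolves this by a parabolic regularization: one first solves $L_{m_j,\nu}u_j^\nu=F_j(u^\nu,\pa_x u^\nu)$ with $L_{m,\nu}=L_m-i\nu\Delta$, where the genuine smoothing of $e^{\nu t\Delta}$ makes a contraction argument work for each fixed $\nu>0$; the quasi-linear decomposition with the top derivative on the solution itself is then legitimate for the actual solution $u^\nu$ of the nonlinear regularized system, and the $\nu$-uniform version of the smoothing estimate (Lemmas~\ref{Lem_smoothing2} and \ref{Lem_QLE2}) yields bounds on a time interval independent of $\nu$, after which one passes to the limit $\nu\to0$ by compactness and recovers strong $\Sigma^s$-continuity via the monotonicity of $\|S_j\Gamma^\alpha u_j\|_{L^2}$ together with a time-reversal and uniqueness argument. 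Your bootstrap book-keeping in step~(a), the $t_0$-independence observation, and the $L^2$ difference estimate in step~(b) are all in the right spirit, but they must be run on the regularized nonlinear solutions (or some other scheme that genuinely circumvents the loss), not on a free-flow Picard iteration.
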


\begin{rem} \label{Rmk_loc_1d}
In the case of $d=1$, 
this claim is true except that $T_s^*$ may depend also on $t_0$ 
(see Remark~\ref{1DRemark} as well as the proof of 
Lemma~\ref{LE_approx} in the Appendix). 
It is unclear whether this exception is just a technical one or not, 
but this problem is out of the purpose of the present work.
\end{rem}

By virtue of this lemma, 
it suffices to obtain an {\it a priori} estimate for 
$\|u(t)\|_{\Gamma, 2\left[\frac{d}{2}\right]+4}$
with a sufficiently small upper bound
in order to show the global 
or almost global existence for \eqref{Eq}--\eqref{Data}.
Let $u$ be a solution to \eqref{Eq}--\eqref{Data} 
satisfying $u\in C\bigl([0,T^*];\Sigma^{s}(\R^d)\bigr)$
with some $T^*>0$ and some positive integer 
$s(\ge 2\left[\frac{d}{2}\right]+4)$.

We define 
$$
 E_k(T)= \sup_{t \in [0,T]}\|u(t)\|_{\Gamma, k}
$$
for $T \in [0,T^*]$ and $k \in \Z_+$. 
Then we have the following:

\begin{lem} \label{Lem_est_low}
Let $d\ge 1$, $s \ge 2\left[\frac{d}{2}\right]+4$ and $T \in [0,T^*]$.\\
{\rm{(i)}}\ \ We have 
\begin{align}  
 E_{s-1}(T)\le 
 C_1 \|\varphi\|_{\Sigma^{s-1}}
 + 
 C_2E_{s-1}(T) 
 \int_{0}^{T}\frac{\|u(t)\|_{\Gamma,s}}{(1+t)^{\frac{d}{2}}}dt,
 \label{Est_low}
\end{align}
where the positive constants $C_1$, $C_2$ are independent of $T$. \\
{\rm (ii)}\ \ 
If the null condition is satisfied, then we have 
\begin{align}  
 E_{s-1}(T)\le 
 C_1 \|\varphi\|_{\Sigma^{s-1}}
 + 
 C_3E_{s-1}(T) 
 \int_{0}^{T}\frac{\|u(t)\|_{\Gamma,s}}{(1+t)^{1+\frac{d}{2}}}dt,
 \label{Est_low_null}
\end{align}
where the positive constant $C_3$ is independent of $T$. 
\end{lem}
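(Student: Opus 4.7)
The plan is to differentiate \eqref{Eq} by $\Gamma_{m_j}(t)^\alpha$ for each $|\alpha|\le s-1$, apply the quasilinear $L^2$ estimate of Lemma~\ref{Lem_QLE} to the resulting system for $u_j^{(\alpha)}$, and close the estimate by bounding the inhomogeneity in $L^2$ through the decay estimate of Lemma~\ref{Lem_kla}. Using $[L_{m_j},\Gamma_{m_j}]=0$ from \eqref{Comm}, one has $L_{m_j}u_j^{(\alpha)}=F_j^{(\alpha)}$; iterating Lemma~\ref{Lem_leibniz} together with the ordinary Leibniz rule for $\pa_x$ expands $F_j^{(\alpha)}$ into a sum of products $(\pa^{\gamma_1}\Gamma_{m_k}^{\beta_1}u_k^\#)(\pa^{\gamma_2}\Gamma_{m_l}^{\beta_2}u_l^\#)$ with $|\beta_1|+|\gamma_1|+|\beta_2|+|\gamma_2|\le|\alpha|+1$, where $\#$ indicates that each factor may be conjugated in accordance with \eqref{Nonlin}. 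The summands in which this total order reaches the maximum $|\alpha|+1$ necessarily contain one factor of the form $\pa_{x_a}u_k^{(\alpha)}$ or $\pa_{x_a}\overline{u_k^{(\alpha)}}$ multiplied by $u_l^\#$ or $\pa u_l^\#$; they fit into the $\Phi\,\pa v+\Psi\,\pa\bar v$ skeleton of Lemma~\ref{Lem_QLE} with coefficients $\Phi,\Psi$ linear in $\pa_x^{\le 1}u$, while the remaining summands are collected into the inhomogeneity $\Theta_j^{(\alpha)}$.

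Since $\Phi,\Psi$ are linear in $\pa_x^{\le 1}u$, the quantity $e_{0,T}$ in Lemma~\ref{Lem_QLE} is bounded by $CE_{[d/2]+4}(T)\le CE_{s-1}(T)$, the latter valid since $s-1\ge[d/2]+4$ (which follows from $s\ge 2[d/2]+4$ and $d\ge 2$). Running the whole argument inside a bootstrap that keeps $E_{s-1}(T)$ below a fixed small constant guarantees $e_{0,T}\le\delta$; applying Lemma~\ref{Lem_QLE} with $t_0=0$, summing over $|\alpha|\le s-1$, and absorbing $\int\frac{e_{0,T}}{\jb{\tau}}\|u^{(\alpha)}\|_{L^2}^2\,d\tau$ by a Gr\"onwall step (harmless while $E_{s-1}(T)$ stays small), I reduce the problem to an $L^2$ bound of $\sum_{|\alpha|\le s-1}\|\Theta_j^{(\alpha)}(\tau)\|_{L^2}$.

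Each summand of $\Theta_j^{(\alpha)}$ has total operator-count at most $|\alpha|+1\le s$, so by symmetry one of its two factors carries at most $[\tfrac{s}{2}]$ operators; placing that factor in $L^\infty$ via Lemma~\ref{Lem_kla} and the other in $L^2$ gives
$$
\|\Theta_j^{(\alpha)}(\tau)\|_{L^2}
\le \frac{C}{\jb{\tau}^{d/2}}\,\|u(\tau)\|_{\Gamma,[s/2]+[d/2]+1}\,\|u(\tau)\|_{\Gamma,s}
\le \frac{C}{\jb{\tau}^{d/2}}\,E_{s-1}(T)\,\|u(\tau)\|_{\Gamma,s},
$$
the last inequality being valid because $[\tfrac{s}{2}]+[\tfrac{d}{2}]+1\le s-1$ under $s\ge 2[\tfrac{d}{2}]+4$. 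Putting all this together produces \eqref{Est_low}. For part~(ii), Corollary~\ref{Cor_extra} (with $s$ replaced by $s-1$) already controls all of $\Gamma^\alpha F_j$ pointwise with an additional factor $\jb{t}^{-1}$ coming from the null structure; this makes the quasilinear/semilinear split unnecessary and lets me choose $\Phi=\Psi=0$, $\Theta_j^{(\alpha)}=F_j^{(\alpha)}$ in Lemma~\ref{Lem_QLE}. Repeating the previous $L^\infty$-$L^2$ estimate then yields \eqref{Est_low_null}.

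The main obstacle is the derivative loss produced by the top-order summands of $F_j^{(\alpha)}$, precisely what the smoothing-based estimate of Lemma~\ref{Lem_QLE} is designed to neutralize. A secondary bookkeeping subtlety is that the Leibniz identities of Lemma~\ref{Lem_leibniz} redistribute the operator $J_{m_j}$ over the masses $m_k$, $m_l$, and this redistribution is consistent only by virtue of the mass resonance relation~\eqref{Mass3}; one has to track the resulting mass shifts carefully to cast the differentiated equation in the form required by Lemma~\ref{Lem_QLE}.
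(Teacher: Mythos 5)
Your treatment of part (ii) is essentially the paper's: with the null condition, Corollary~\ref{Cor_extra} bounds $\|F^{(\alpha)}\|_{L^2}$ for $|\alpha|\le s-1$ by $C\jb{t}^{-1-\frac{d}{2}}E_{s-1}(T)\|u\|_{\Gamma,s}$ and the standard energy inequality closes the estimate. But your route for part (i) has a genuine gap, and it stems from misidentifying the obstacle. There is \emph{no} derivative-loss problem in this lemma: since the right-hand side of \eqref{Est_low} is allowed to contain the \emph{higher} norm $\|u(t)\|_{\Gamma,s}$, the entire $F^{(\alpha)}$ --- including the top-order summands $(\pa_{x_a}u_k^{(\alpha)})\cdot u_l^{\#}$ that you route through the quasilinear skeleton --- can be placed directly into the inhomogeneity and estimated in $L^2$ by $C\jb{t}^{-\frac{d}{2}}\bigl\||u|_{\Gamma,[\frac{s+1}{2}]}\bigr\|_{L^\infty}\|u\|_{\Gamma,s}\le C\jb{t}^{-\frac{d}{2}}E_{s-1}(T)\|u\|_{\Gamma,s}$ via Lemmas~\ref{Lem_leibniz} and \ref{Lem_kla}. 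The standard energy inequality $\|u^{(\alpha)}(t)\|_{L^2}\le\|u^{(\alpha)}(0)\|_{L^2}+\int_0^t\|F^{(\alpha)}\|_{L^2}\,d\tau$ then gives \eqref{Est_low} unconditionally. The smoothing machinery of Lemma~\ref{Lem_QLE} is needed only at the top order $s$, i.e.\ in Lemma~\ref{Lem_est_high}.

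By instead invoking Lemma~\ref{Lem_QLE} you incur three costs that the stated lemma does not permit. First, Lemma~\ref{Lem_QLE} requires $d\ge 2$, whereas the lemma is asserted for $d\ge 1$ (and is used with the energy method also in the one-dimensional discussion). Second, you must assume $e_{0,T}\le\delta$, i.e.\ a smallness hypothesis on the solution which the lemma does not carry; the statement is an unconditional inequality. Third, and most seriously, the term $C\int_0^t\frac{e_{0,T}}{\jb{\tau}}\|v(\tau)\|_{L^2}^2\,d\tau$ produced by Lemma~\ref{Lem_QLE} cannot be ``absorbed by a Gr\"onwall step'' into the form \eqref{Est_low}: Gr\"onwall converts it into a multiplicative factor $(1+t)^{Ce_{0,T}}$ (exactly the growth that appears in \eqref{Est_high}), not the clean growth-free bound claimed; and since $e_{0,T}$ is a time-supremum, the decay in that term is only $\jb{\tau}^{-1}$, which for $d\ge 3$ is strictly worse than the $\jb{\tau}^{-\frac{d}{2}}$ in \eqref{Est_low} and would break the convergence of the integral needed in the proof of Theorem~\ref{Thm3}. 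Your own observation in part (ii) --- that one may take $\Phi=\Psi=0$ and put everything into $\Theta$ --- is precisely what should be done in part (i) as well.
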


\begin{proof}
By Lemmas~\ref{Lem_kla} and \ref{Lem_leibniz}, we have 
\begin{align*} 
  \sum_{|\alpha|\le s-1} \|F^{(\alpha)}(t)\|_{L^2}
   &\le 
  C
   \Bigl\| |u(t)|_{\Gamma, [\frac{s+1}{2}]} \Bigr\|_{L^\infty} 
   \|u(t)\|_{\Gamma,s}\\
   &\le 
  \frac{C}{\jb{t}^{\frac{d}{2}}} 
  \|u(t)\|_{\Gamma, s-1}\|u(t)\|_{\Gamma,s}
    \le 
  CE_{s-1}(T) \frac{\|u(t)\|_{\Gamma,s}}{\jb{t}^{\frac{d}{2}}}
\end{align*}
for $t\in [0, T]$.
Here we have used the relation 
$\left[\frac{s+1}{2}\right]+\left[\frac{d}{2}\right]+1 \le s-1$ 
for $s\ge 2\left[\frac{d}{2}\right]+4$.  
Also it follows from the commutation relation \eqref{Comm} that 
$L_{m_j}u_j^{(\alpha)} =F_j^{(\alpha)}$. 
Therefore the standard energy inequality leads to  
\begin{align*}
 \sum_{|\alpha|\le s-1}\|u^{(\alpha)}(t)\|_{L^2}
 &\le 
  \sum_{|\alpha|\le s-1}\|u^{(\alpha)}(0)\|_{L^2} 
 + 
 \int_0^t
  \sum_{|\alpha|\le s-1}\|F^{(\alpha)}(\tau) \|_{L^2}
 d\tau\\
 &\le 
 C\|\varphi\|_{\Sigma^{s-1}}
 +  
 C E_{s-1}(T)  
 \int_0^{T} \frac{\|u(\tau)\|_{\Gamma,s}}{\jb{\tau}^{\frac{d}{2}}} d\tau
\end{align*}
for $t\le T$. This implies \eqref{Est_low}. 
If the null condition is satisfied, we can use Corollary~\ref{Cor_extra}, 
instead of Lemma~\ref{Lem_leibniz}, to obtain 
\begin{align} 
  \sum_{|\alpha|\le s-1} \|F^{(\alpha)}(t)\|_{L^2}
   &\le 
  \frac{C}{\jb{t}} 
   \Bigl\|
     |u(t)|_{\Gamma, [\frac{s+1}{2}]}
   \Bigr\|_{L^\infty} 
     \|u(t)\|_{\Gamma,s}
   \nonumber\\
   &\le 
   \frac{C}{\jb{t}^{1+\frac{d}{2}}} 
   \|u(t)\|_{\Gamma, s-1}\|u(t)\|_{\Gamma,s}
   \nonumber\\
  &\le 
  CE_{s-1}(T)  \frac{\|u(t)\|_{\Gamma,s}}{\jb{t}^{1+\frac{d}{2}}}
  \label{DoubleStar}
\end{align}
for $t\in [0,T]$,
which, together with the energy inequality, yields \eqref{Est_low_null}.
\end{proof}

\begin{lem} \label{Lem_est_high}
Let $d\ge 2$, $s \ge 2\left[\frac{d}{2}\right]+5$ and $T \in [0,T^*]$. 
We put $s_0:=\left[\frac{d}{2}\right]+4$. 
There is a positive constant $\widetilde{\delta}$ such that if 
$E_{s_0}(T)\le \widetilde{\delta}$, we have 
\begin{align}  
 \|u(t)\|_{\Gamma, s}
 \le C_4\|\varphi\|_{\Sigma^{s}} (1+t)^{C_5E_{s-1}(T)}
 \label{Est_high}
\end{align}
for $t \in [0,T]$, 
where the positive constants $C_4$, $C_5$ and $\widetilde{\delta}$ are 
independent of $T$. 
\end{lem}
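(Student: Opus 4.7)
The plan is to apply the quasi-linear energy estimate of Lemma~\ref{Lem_QLE} to the triple $v=(u_1^{(\alpha)}, u_2^{(\alpha)}, u_3^{(\alpha)})$ for each multi-index $\alpha\in(\Z_+)^{2d}$ with $|\alpha|\le s$, then sum in $\alpha$ and invoke Gronwall's inequality. Since $[L_{m_j},\Gamma_{m_j}]=0$ by \eqref{Comm}, the reduced equation reads $L_{m_j}u_j^{(\alpha)}=F_j^{(\alpha)}$, and my goal is to recast each $F_j^{(\alpha)}$ in the quasi-linear template
\[
F_j^{(\alpha)} = \sum_{k=1}^{3}\sum_{a=1}^{d}\bigl(\Phi_{jk,a}^{\alpha}\,\pa_{x_a}u_k^{(\alpha)} + \Psi_{jk,a}^{\alpha}\,\pa_{x_a}\overline{u_k^{(\alpha)}}\bigr) + \Theta_j^{\alpha},
\]
where the coefficients $\Phi_{jk,a}^{\alpha}$, $\Psi_{jk,a}^{\alpha}$ involve $u$ essentially without any vector fields applied, and $\Theta_j^{\alpha}$ collects the ``balanced'' remainder terms.

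To produce this decomposition I would expand $F_j^{(\alpha)}=\Gamma_{m_j}^{\alpha}F_j(u,\pa_x u)$ by means of Lemma~\ref{Lem_leibniz} for the $J$-type vector fields and the ordinary Leibniz rule for the $\pa_x$-factors; whenever a subfactor of the form $\Gamma_{m_l}^{\gamma}\pa_{x_a}u_l$ appears, I would further commute it into $\pa_{x_a}\Gamma_{m_l}^{\gamma}u_l$ plus commutator remainders coming from \eqref{Comm}. The top-order piece, in which all of $\Gamma_{m_j}^{\alpha}$ falls on the differentiated factor while the other factor is left untouched, is exactly what fits the template above, so that $\Phi_{jk,a}^{\alpha}$ and $\Psi_{jk,a}^{\alpha}$ depend linearly on the $u_m$'s with no $\Gamma$'s applied. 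Consequently
\[
\sum_{|\beta|\le \left[\frac{d}{2}\right]+3}\bigl(\|\Gamma_{m_k}^{\beta}\Phi_{jk,a}^{\alpha}\|_{L^2}+\|\Gamma_{m_k}^{\beta}\Psi_{jk,a}^{\alpha}\|_{L^2}\bigr)\le C\,E_{s_0}(T),
\]
and choosing $\widetilde{\delta}$ so that $C\widetilde{\delta}$ lies below the threshold $\delta$ of Lemma~\ref{Lem_QLE} verifies the smallness hypothesis $e_{0,T}\le \delta$.

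For the forcing $\Theta_j^{\alpha}$, a typical summand is $(\Gamma_{m_k}^{\beta}u_k)(\pa_{x_a}\Gamma_{m_l}^{\gamma}u_l)$ (and conjugated variants) with $|\beta|+|\gamma|\le s$ and $|\gamma|<|\alpha|$; by symmetry I may assume $|\beta|\le \left[\frac{s}{2}\right]$, whereupon $|\beta|+\left[\frac{d}{2}\right]+1\le s-1$ follows from $s\ge 2\left[\frac{d}{2}\right]+5$. Lemma~\ref{Lem_kla} then yields
\[
\|\Theta_j^{\alpha}(t)\|_{L^2}\le \frac{C}{\jb{t}^{d/2}}\|u(t)\|_{\Gamma,s-1}\|u(t)\|_{\Gamma,s}\le \frac{C\,E_{s-1}(T)}{1+t}\|u(t)\|_{\Gamma,s},
\]
where the second inequality uses $d\ge 2$. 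Since $s_0\le s-1$, I also get $e_{0,T}\le C\,E_{s-1}(T)$. Inserting these estimates into the conclusion of Lemma~\ref{Lem_QLE}, bounding the cross term $\|v\|_{L^2}\|\Theta\|_{L^2}$ by AM--GM, and summing over all $|\alpha|\le s$ would produce
\[
\|u(t)\|_{\Gamma,s}^2\le C\|\varphi\|_{\Sigma^s}^2 + C\int_{0}^{t}\frac{E_{s-1}(T)}{1+\tau}\|u(\tau)\|_{\Gamma,s}^2\,d\tau,
\]
and Gronwall's inequality then delivers the advertised bound $\|u(t)\|_{\Gamma,s}^2\le C\|\varphi\|_{\Sigma^s}^2(1+t)^{C\,E_{s-1}(T)}$.

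The main obstacle I anticipate is the algebraic bookkeeping required to separate the genuinely quasi-linear top-order piece, which must be absorbed by the smoothing term in Lemma~\ref{Lem_QLE}, from the $\Theta$-remainder. One must make sure that every summand of $\Gamma_{m_j}^{\alpha}F_j$ in which a single factor carries close to $s$ vector fields fits exactly into the prescribed quasi-linear template, while every mixed term in $\Theta$ retains a spare low-order factor to be placed in $L^\infty$ via Lemma~\ref{Lem_kla}. The threshold $s\ge 2\left[\frac{d}{2}\right]+5$ is exactly what permits the forcing estimate to be closed in terms of $E_{s-1}$ rather than $E_{s}$, which is the structure needed to feed the subsequent bootstrap in Section~6.
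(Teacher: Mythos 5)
Your proposal is correct and follows essentially the same route as the paper's proof: Lemma~\ref{Lem_leibniz} is used to peel off the top-order quasi-linear terms $g_{j,a}^{\alpha}\pa_{x_a}u_k^{(\alpha)}$, $h_{j,a}^{\alpha}\overline{\pa_{x_a}u_k^{(\alpha)}}$ with coefficients controlled by $E_{s_0}(T)$, the balanced remainder is bounded by $C\jb{t}^{-d/2}E_{s-1}(T)\|u(t)\|_{\Gamma,s}$ using $\left[\frac{s}{2}\right]+\left[\frac{d}{2}\right]+2\le s-1$, and Lemma~\ref{Lem_QLE} plus Gronwall closes the estimate. The only details the paper additionally records are the choice of the mass parameters $(\lambda_j,\mu_j)$ in which the coefficient norms are measured (so that, e.g., $\Gamma_{-m_2}$ acting on $\overline{\pa^{\beta}u_2}$ matches $\Gamma_{m_2}$ on $u_2$) and the verification that the coefficients belong to ${\mathcal B}^1([0,T]\times\R^d)$, both routine.
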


\begin{proof}
For $\alpha\in (\Z_+)^{2d}$, we write $\alpha=(\alpha',\alpha'')$
with $\alpha', \alpha''\in (\Z_+)^d$. 
Let $|\alpha|\le s$. By using Lemma \ref{Lem_leibniz}, we can split 
$F_j^{(\alpha)}$ into the following form: 
\begin{align*}
 &F_1^{(\alpha)}
 =
 \sum_{a=1}^{d}
 \Bigl( 
 g_{1, a}^{\alpha} \pa_{x_a} u_3^{(\alpha)} 
 +
 h_{1,a}^{\alpha} \overline{\pa_{x_a} u_2^{(\alpha)}}
 \Bigr)
 + 
 r_1^{\alpha},\\
 &F_2^{(\alpha)}
 =
 \sum_{a=1}^{d}
 \Bigl( 
 g_{2, a}^{\alpha} \overline{\pa_{x_a} u_1^{(\alpha)}} 
 +
 h_{2,a}^{\alpha} \pa_{x_a} u_3^{(\alpha)}
 \Bigr)
 + 
 r_2^{\alpha},
 \\
 &F_3^{(\alpha)}
 =
 \sum_{a=1}^{d}
 \Bigl( 
 g_{3, a}^{\alpha} \pa_{x_a} u_2^{(\alpha)}
 +
 h_{3,a}^{\alpha} \pa_{x_a} u_1^{(\alpha)} 
 \Bigr)
 + 
 r_3^{\alpha},
\end{align*}
where 
\begin{align*}
 \begin{array}{l}
 \displaystyle
 g_{1, a}^{\alpha}= 
  \left(\frac{m_3}{m_1} \right)^{|\alpha''|}
  \sum_{|\beta|\le 1} \Co{1}{\beta}{\iota(a)}  (\overline{\pa^{\beta} u_2}), 
 \\
 \displaystyle
 h_{1, a}^{\alpha}= 
  \left(\frac{-m_2}{m_1}\right)^{|\alpha''|} 
 \sum_{|\beta|\le 1} \Co{1}{\iota(a)}{\beta}  (\pa^{\beta} u_3),
 \\
  \displaystyle
 g_{2, a}^{\alpha}= 
  \left(\frac{-m_1}{m_2} \right)^{|\alpha''|}
  \sum_{|\beta|\le 1} \Co{2}{\beta}{\iota(a)}  (\pa^{\beta} u_3), 
 \\
 \displaystyle
 h_{2, a}^{\alpha}= 
  \left(\frac{m_3}{m_2}\right)^{|\alpha''|} 
 \sum_{|\beta|\le 1} \Co{2}{\iota(a)}{\beta}  (\overline{\pa^{\beta} u_1}),
 \\
  \displaystyle
 g_{3, a}^{\alpha}= 
  \left(\frac{m_2}{m_3} \right)^{|\alpha''|}
  \sum_{|\beta|\le 1} \Co{3}{\beta}{\iota(a)} (\pa^{\beta} u_1), 
 \\
 \displaystyle
 h_{3, a}^{\alpha}= 
  \left(\frac{m_1}{m_3}\right)^{|\alpha''|} 
 \sum_{|\beta|\le 1} \Co{3}{\iota(a)}{\beta}  (\pa^{\beta} u_2),
\end{array}
\end{align*}
and $r_j^{\alpha}$ satisfies
$$
 |r_j^{\alpha}| \le C|u|_{\Gamma,[\frac{s}{2}]+1} |u|_{\Gamma,s}.
$$

From \eqref{Eq} we get $u\in C^1\bigl([0,T]; \Sigma^{s-2}(\R^d)\bigr)$,
which, together with the Sobolev embedding theorem, implies 
$g_{j,a}^\alpha, h_{j,a}^\alpha\in {\mathcal B}^1([0,T]\times \R^d)$,
since we have $\left[\frac{d}{2}\right]+2\le s-2$.
We set 
$(\lambda_1, \mu_1)=(-m_2, m_3)$, 
$(\lambda_2, \mu_2)=(m_3, -m_1)$ 
and 
$(\lambda_3, \mu_3)=(m_1, m_2)$. 
Then we get
\begin{align}
 \sum_{|\alpha|\le s}\sum_{j=1}^3\sum_{a=1}^d\sum_{|\gamma|\le [\frac{d}{2}]+3} 
 \Bigl(\|\Gamma_{\lambda_j}^{\gamma} g_{j, a}^{\alpha}(t)\|_{L^2}
 {}+\|\Gamma_{\mu_j}^{\gamma} h_{j, a}^\alpha(t)\|_{L^2}\Bigr)
  &\le 
  C\|u(t)\|_{\Gamma,[\frac{d}{2}]+4}
 \nonumber\\
  &\le CE_{s_0}(T)
 \label{Est_coeff1}
\end{align}
for $0\le t\le T$.
We also have
$$
 \|r_j^{\alpha}(t)\|_{L^2} 
  \le 
  \frac{C}{\jb{t}^{\frac{d}{2}}}
  \|u(t)\|_{\Gamma,[\frac{s}{2}] + [\frac{d}{2}] + 2}  
  \|u(t)\|_{\Gamma,s}
  \le
  CE_{s-1}(T) \frac{\|u(t)\|_{\Gamma,s}}{\jb{t}^{\frac{d}{2}}}
$$
for $0\le t\le T$.
Here we have used the relation 
$\left[\frac{s}{2}\right]+\left[\frac{d}{2}\right]+2\le s-1$ for 
$s\ge 2\left[\frac{d}{2}\right]+5$. 
In view of \eqref{Est_coeff1},  
we can apply Lemma~\ref{Lem_QLE} if $E_{s_0}(T)$ is sufficiently small,
and we obtain
 \begin{align*}
  \|u(t)\|_{\Gamma,s}^2
  \le &
  C \|\varphi\|_{\Sigma^s}^2
  +
  C \int_0^t\left(
    \frac{E_{s_0}(T)\|u(\tau)\|_{\Gamma,s}^2}{\jb{\tau}}
    +
    \sum_{j=1}^3\|r_j^\alpha(\tau)\|_{L^2}\|u(\tau)\|_{\Gamma,s}
  \right)d\tau\\
  \le & 
   C \|\varphi\|_{\Sigma^s}^2
   +
   CE_{s-1}(T)\int_{0}^{t}\frac{\|u(\tau)\|_{\Gamma,s}^2}{\jb{\tau}}d\tau
\end{align*}
for $t\in [0,T]$. 
By the Gronwall lemma, we obtain the desired estimate. 
\end{proof}

Now we are in a position to complete the proof of the main theorems. 

\begin{proof}[\underline{Proof of Theorem~$\ref{Thm1}$}] 
First we show the global existence for \eqref{Eq}--\eqref{Data}. 
Let $d=2$ and 
$s\ge 7=\left. \left(2\left[\frac{d}{2}\right]+5\right)\right|_{d=2}$. 
We put $A=4C_1$ and 
$\eps_0=\min\left\{
         \frac{\widetilde{\delta}}{A}, \frac{1}{2C_5 A}, \frac{1}{8C_3 C_4}
        \right\}$, 
where the constants $\widetilde{\delta}$ and $C_j$ with $1\le j\le 5$ come 
from the previous lemmas. We also define 
$$
 T^{**}=\sup \{ T \in [0,T^*] \  ;\  E_{s-1}(T)\le A\eps \}
$$
for  $\eps=\|\varphi\|_{\Sigma^{s}} \le \eps_0$
(note that we have $T^{**}>0$ because $E_{s-1}(0)\le C_1\eps\le A\eps/4$). 
Then it follows from \eqref{Est_high} and \eqref{Est_low_null} that 
$T^{**}=T^*$. 
Indeed, if $T^{**}<T^*$, we have $E_{s-1}(T^{**}) \le A\eps$ and
thus $E_{s_0}(T^{**})\le A\eps_0\le \widetilde{\delta}$. Hence 
\eqref{Est_high} implies 
$\|u(t)\|_{\Gamma,s} \le C_4 \eps (1+t)^{\frac{1}{2}}$, 
and \eqref{Est_low_null} yields 
\begin{align*}
 E_{s-1}(T^{**})
 \le 
 \frac{A}{4}\eps 
 + 
 C_3 A\eps \int_0^{\infty} \frac{C_4 \eps}{(1+t)^{\frac{3}{2}}}dt
 \le 
 \frac{A}{2}\eps.
\end{align*}
By the continuity of $[0, T^{*}]\ni T \mapsto E_{s-1}(T)$, we can choose 
$\tilde{T}>T^{**}$ such that $E_{s-1}(\tilde{T})\le A\eps$, which contradicts 
the definition of $T^{**}$, and the desired identity is shown. 
Consequently, we see that 
$$
  \sup_{t\in [0,T^*]} \|u(t)\|_{\Gamma,6}
  \le E_{s-1}(T^*) \le A \eps.
$$  
This {\it a priori} bound and Lemma~\ref{Lem_local} imply the global 
existence. Moreover, the global solution $u(t)$ satisfies
\begin{align} 
 \|u(t)\|_{\Gamma,s-1}\le C\eps,
 \quad 
 \|u(t)\|_{\Gamma,s} \le C\eps \jb{t}^{C\eps}
 \label{Est_sol}
\end{align} 
for all  $t\ge 0$, where the second inequality follows from the first one and 
\eqref{Est_high}. 

Next we turn our attention to the asymptotic behavior. 
We write $F_j(t)=F_j(u(t),\pa_x u(t))$ for simplicity.  
We observe that Corollary~\ref{Cor_extra} and \eqref{Est_sol} yield
\begin{align*}
 \sum_{j=1}^{3} \|U_{m_j}(-t) F_j(t)\|_{\Sigma^{s-1}}
 \le
 C\| F(t) \|_{\Gamma,s-1}
 \le 
 \frac{C \eps^2}{\jb{t}^{2-C\eps}}
 \in L^1(0,\infty)
\end{align*}
if $\eps$ is small enough (cf.~\eqref{DoubleStar}). This allows us to define 
$$
 \varphi_j^+:=  \varphi_j -
 i\int_{0}^{\infty} U_{m_j}(-\tau) F_j(\tau) d\tau
 \in \Sigma^{s-1}(\R^2).
$$
Then it follows from the Duhamel formula that 
\begin{align*}
 u_j(t)
 &=  U_{m_j}(t)\varphi_j -
  i\int_{0}^{t} U_{m_j}(t-\tau) F_j(\tau) d\tau\\
 &=U_{m_j}(t)\left(
   \varphi_j^+ 
   + i\int_{t}^{\infty} U_{m_j}(-\tau) F_j(\tau) d\tau
  \right).
\end{align*}
Therefore we have
\begin{align*} 
\|U_{m_j}(-t) u_j(t) - \varphi_j^+ \|_{\Sigma^{s-1}}
 &\le 
 \int_{t}^{\infty} \|U_{m_j}(-\tau) F_j(\tau)\|_{\Sigma^{s-1}} 
 d\tau\\
 &\le 
 \int_{t}^{\infty} \frac{C \eps^2}{\jb{\tau}^{2-C\eps}}
 d\tau\\ 
 &\le 
 \frac{C \eps^2}{\jb{t}^{1-C\eps}}. 
\end{align*}
This completes the proof.
\end{proof}

\begin{proof}[\underline{Proof of Theorem~$\ref{Thm2}$}] 
We put $A=4C_1$ and choose ${\omega}>0$ such that 
$$C_2C_4 e^{C_5 A {\omega}} {\omega} \le 1/4.$$ 
Then, it follows from \eqref{Est_high} and \eqref{Est_low} with $d=2$ 
that the inequalities 
$E_{s-1}(T)\le A\eps$ and $\log (1+T) \le {\omega}/\eps$ imply
\begin{align*}
 E_{s-1}(T)
 &\le 
 C_1\eps 
 + 
 C_2 A\eps \int_0^{T} 
 \frac{C_4 \eps e^{C_5 A\eps \log (1+T)}}{(1+t)}dt
 \\
 &\le 
 \frac{A}{4}\eps + C_2 C_4 e^{C_5 A{\omega}} A\eps^2 \log (1+T)
 \\
 &\le 
 \left(\frac{1}{4} + C_2 C_4 e^{C_5 A{\omega}} {\omega} \right) A\eps 
 \le 
 \frac{A}{2}\eps,
\end{align*}
provided that $\eps$ is small enough.
This means that we can keep $E_{s-1}(T^{*})$ dominated by $A\eps$ 
as long as  $\log (1+T^{*}) \le {\omega}/\eps$. 
This {\it a priori} bound and Lemma~\ref{Lem_local} 
imply the desired almost global existence.
\end{proof}

\begin{proof}[\underline{Proof of Theorem~$\ref{Thm3}$}] 
We skip it since the essential idea is exactly the same as 
that of the preceding ones. We only point out that 
$\jb{t}^{-\frac{d}{2}+C\eps} \in L^1(0,\infty)$ 
if $d\ge 3$ and $\eps$ is small enough.
\end{proof}

\appendix 
\section{Proof of Lemmas~\ref{Lem_smoothing}, \ref{Lem_aux} and \ref{Lem_local}}

This Appendix is devoted to the proof of Lemmas~\ref{Lem_smoothing}, 
\ref{Lem_aux} and \ref{Lem_local},
which are essentially not new but less trivial.

\subsection{Proof of Lemma~\ref{Lem_smoothing}}

The following lemma, which will be used repeatedly, 
is a special case of Lemma~2.1 in \cite{BHN} and we refer 
the readers to it for the proof. 

\begin{lem}\label{BeHaNa}
We have
$$
 \left\| \bigl[ |\pa_{x_a}|^{\frac{1}{2}}, g \bigr] f \right\|_{L^2}
 {}+\left\|\bigl[ |\pa_{x_a}|^{\frac{1}{2}}\mathcal{H}_a, g \bigr] f  
 \right\|_{L^2}\le
 C \|g\|_{L^{\infty}}^{\frac{1}{2}}
   \|\pa_{x_a}g\|_{L^{\infty}}^{\frac{1}{2}} 
   \|f\|_{L^2}
$$
for 
$f\in L^2(\R^d)$, $g\in W^{1,\infty}(\R^d)$ and $a\in \{1,\ldots,d\}$.
\end{lem}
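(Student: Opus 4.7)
The plan is to reduce the estimate to a one-dimensional singular-integral commutator bound and then close the bound by Schur's test combined with a length-scale optimization. Because both $|\pa_{x_a}|^{1/2}$ and $\mathcal H_a$ are Fourier multipliers acting only in the single variable $x_a$, Fubini lets me fix the transverse variables $x'=(x_b)_{b\ne a}\in \R^{d-1}$ and reduce matters to the one-dimensional statement: for $D=-i\pa$ on $\R$ and any $G\in W^{1,\infty}(\R)$,
\[
\bigl\|[|D|^{1/2},G]h\bigr\|_{L^2(\R)}+\bigl\|[|D|^{1/2}\mathcal H,G]h\bigr\|_{L^2(\R)}\le C\|G\|_\infty^{1/2}\|G'\|_\infty^{1/2}\|h\|_{L^2(\R)}.
\]
Applying this with $G=g(\cdot,x')$ and $h=f(\cdot,x')$, squaring, integrating in $x'$, and bounding each transverse $L^\infty$-norm by the full one recovers the $d$-dimensional inequality.

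For the one-dimensional step I would invoke the standard singular-integral representations obtained by inverse Fourier transforming the tempered-distribution symbols $|\xi|^{1/2}$ and $-i\,{\rm sgn}(\xi)|\xi|^{1/2}$,
\[
|D|^{1/2}h(x)=c_0\,\mathrm{p.v.}\!\int_\R\frac{h(x)-h(y)}{|x-y|^{3/2}}\,dy,\quad
|D|^{1/2}\mathcal H\,h(x)=c_1\,\mathrm{p.v.}\!\int_\R\frac{h(x)-h(y)}{|x-y|^{3/2}}\sgn{x-y}\,dy,
\]
valid on Schwartz functions. A direct cancellation of the $h(x)$ contributions when each operator is applied to the product $Gh$ shows that both commutators are integral operators acting on $h$ with kernels of the form
\[
K_\pm(x,y)=c_\pm\,\frac{G(x)-G(y)}{|x-y|^{3/2}}\,\chi_\pm(x-y),\qquad \chi_+\equiv 1,\ \chi_-(z)=\sgn{z}.
\]

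To bound each $K_\pm$ as an $L^2\to L^2$ operator by Schur's test, the key input is the elementary interpolation $|G(x)-G(y)|\le \min(2\|G\|_\infty,\,\|G'\|_\infty|x-y|)$. Splitting $\int_\R|K_\pm(x,y)|\,dy$ at a radius $R>0$ and using each half of the interpolation on the appropriate region yields the bound $C(\|G'\|_\infty R^{1/2}+\|G\|_\infty R^{-1/2})$, and the optimal choice $R=\|G\|_\infty/\|G'\|_\infty$ produces the geometric-mean bound $C\|G\|_\infty^{1/2}\|G'\|_\infty^{1/2}$. By symmetry of the kernel in $(x,y)$ the same bound controls $\sup_y\int|K_\pm|\,dx$, so Schur's test closes the one-dimensional estimate.

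The step that is less mechanical than it appears is the rigorous derivation of the two singular-integral representations and of the termwise cancellation producing $K_\pm$: one must interpret the principal values with care (standard regularization by replacing $|z|^{-3/2}$ with $(z^2+\eps^2)^{-3/4}$, passing to $\eps\to 0^+$ on a dense subclass of $h$) to avoid an argument that would circularly appeal to the boundedness of $|D|^{1/2}$ on some $L^p$-space. Once this bookkeeping is done, no new ideas are required.
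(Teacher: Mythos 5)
Your argument is correct: the reduction to one dimension via Fubini is legitimate because $|\pa_{x_a}|^{1/2}$ and $\mathcal{H}_a$ act only in the variable $x_a$; the kernel of each commutator does collapse to $c_\pm (G(x)-G(y))|x-y|^{-3/2}\chi_\pm(x-y)$, which is symmetric in modulus and, by the interpolation $|G(x)-G(y)|\le\min(2\|G\|_\infty,\|G'\|_\infty|x-y|)$, has Schur bound $C(\|G'\|_\infty R^{1/2}+\|G\|_\infty R^{-1/2})$, optimized to the stated geometric mean. Note that the paper itself offers no proof of this lemma --- it simply cites Lemma~2.1 of \cite{BHN} --- and your singular-integral representation plus Schur-test argument is the standard self-contained proof of that cited result; one small simplification available to you is that, once the difference $G(x)-G(y)$ appears, the kernel is absolutely integrable in $y$ (it is $O(|x-y|^{-1/2})$ near the diagonal and $O(|x-y|^{-3/2})$ at infinity), so no principal value interpretation is needed for the commutator itself, and the regularization care is only required in justifying the two representation formulas on a dense class before passing to general $f\in L^2$ by density.
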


We put $L_{m,\nu}=L_m-i\nu\Delta$ for $m\in \R\setminus\{0\}$ and $\nu\ge 0$.
For the later purpose, we show a slightly generalized version of 
Lemma~\ref{Lem_smoothing}.

\begin{lem} \label{Lem_smoothing2}
Let  
$m \in \R\setminus \{0\}$, $\kappa\in (0,1]$, $t_0 \in \R$, $T>0$, 
and $\nu \in [0,1]$. Put 
$S(t)=S_+(t;\kappa)$ when $m>0$ and $S(t)=S_-(t;\kappa)$ when $m<0$. 
We have 
\begin{align*}
& \|S(t)f(t)\|_{L^2}^2 
 +
 \int_{t_0}^t \frac{\kappa}{|m|\jb{\tau}}
 \sum_{a=1}^{d} 
\Bigl\| w_a(\tau) S(\tau)|\pa_{x_a}|^{\frac{1}{2}}f(\tau) \Bigr\|_{L^2}^2 d\tau
\\
& \le 
\|S(t_0)f(t_0)\|_{L^2}^2+\int_{t_0}^t\biggl( 
 \frac{C\kappa}{\jb{\tau}} \|S(\tau)f(\tau)\|_{L^2}^2
 +
 2\left|\Jb{S(\tau)f(\tau), S(\tau) L_{m,\nu}f(\tau)}_{L^2}\right| \biggr)d\tau
\end{align*}
for $t\in [t_0,t_0+T]$ and $f\in C\bigl([t_0, t_0+T]; H^2(\R^d)\bigr) \cap C^1\bigl([t_0,t_0+T]; L^2(\R^d)\bigr)$,
where the constant $C$ is independent of $\kappa\in (0,1]$, $\nu \in [0,1]$, $T>0$ and $t_0\in \R$. 
\end{lem}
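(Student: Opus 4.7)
The plan is to run a modified energy estimate for $\|S(t)f(t)\|_{L^2}^2$ in the spirit of \cite{BHN}, where the conjugating operator $S(t)$ is tuned so that the commutator $[L_m, S(t)]$ produces the smoothing gain with the favourable sign. Differentiating in time and substituting $i\pa_t f = L_{m,\nu}f - \frac{1}{2m}\Delta f + i\nu\Delta f$, I would decompose
\[
 \tfrac{d}{dt}\|S(t)f(t)\|_{L^2}^2
 = 2\realpart\Jb{Sf,(\pa_t S)f}_{L^2}
   - 2\imagpart\Jb{Sf, S L_{m,\nu}f}_{L^2}
   + \tfrac{1}{m}\imagpart\Jb{Sf, S\Delta f}_{L^2}
   + 2\nu\realpart\Jb{Sf, S\Delta f}_{L^2}.
\]
The $L_{m,\nu}$ term is already in the form $2|\Jb{Sf, SL_{m,\nu}f}_{L^2}|$ appearing on the right-hand side of the inequality. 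The $\pa_t S$ term is benign: since $|\pa_t\Lambda_{\kappa,a}(t,x)|\le C\kappa/\jb{t}$ uniformly in $x$ and $\mathcal{H}_a$ is bounded on $L^2$, it is dominated by $C\kappa\jb{t}^{-1}\|Sf\|_{L^2}^2$ after using \eqref{EstS} to replace $\|f\|_{L^2}$ by $\|Sf\|_{L^2}$.

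For the two Laplacian terms I write $S\Delta f = \Delta Sf + [S,\Delta]f$. The piece with $\Delta Sf$ contributes zero in the Schr\"odinger part, because $\Jb{Sf, \Delta Sf}_{L^2}=-\|\nabla Sf\|_{L^2}^2$ is real, and yields $-2\nu\|\nabla Sf\|_{L^2}^2\le 0$ in the dissipative part, which can be discarded. The heart of the argument is then the commutator $[S,\Delta]$. Since $S_{\pm,b}$ commutes with $\pa_{x_a}$ whenever $b\ne a$, only the $a$-th factor contributes to $[\pa_{x_a}^2,S]$, and $\pa_{x_a}\Lambda_{\kappa,a}(t,x)=\frac{\kappa}{\jb{t}}w_a(t,x)^2$ yields
\[
 [\pa_{x_a},S_{\pm,a}]
 = \frac{\kappa w_a^2}{\jb{t}}
   \bigl(\sinh\Lambda_{\kappa,a}\mp i\cosh\Lambda_{\kappa,a}\mathcal{H}_a\bigr).
\]
Choosing $S=S_+$ when $m>0$ and $S=S_-$ when $m<0$, and using the Hilbert-transform identity $\mathcal{H}_a\pa_{x_a}=-|\pa_{x_a}|$ together with Lemma~\ref{BeHaNa} to commute the cutoffs $w_a$, the hyperbolic functions of $\Lambda_{\kappa,a}$ and $|\pa_{x_a}|^{1/2}$ past $S$ modulo $O(\kappa/\jb{t})$ remainders, I would identify the leading term as
\[
 \tfrac{1}{m}\imagpart \Jb{Sf,[S,\Delta]f}_{L^2}
 = -\frac{\kappa}{|m|\jb{t}}\sum_{a=1}^{d}
     \bigl\|w_a(t)S|\pa_{x_a}|^{1/2}f\bigr\|_{L^2}^2 + R(t),
\]
with $|R(t)|\le C\kappa\jb{t}^{-1}\|Sf\|_{L^2}^2$. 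Moving the negative principal part to the left-hand side after time integration produces the smoothing term in the conclusion.

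The only new feature relative to the $\nu=0$ case is the cross term $2\nu\realpart\Jb{Sf,[S,\Delta]f}_{L^2}$. Controlling $[S,\Delta]f$ in $L^2$ by a combination of $(\kappa/\jb{t})\|\nabla Sf\|_{L^2}$ and $(\kappa/\jb{t})\|Sf\|_{L^2}$ and applying Cauchy--Schwarz with AM--GM, this contribution is at most $\nu\|\nabla Sf\|_{L^2}^2 + C\nu\kappa^2\jb{t}^{-2}\|Sf\|_{L^2}^2$; the first summand is absorbed by the genuinely non-positive $-2\nu\|\nabla Sf\|_{L^2}^2$, while the second, together with $\nu,\kappa\in(0,1]$, is swallowed by the $C\kappa\jb{t}^{-1}\|Sf\|_{L^2}^2$ error, yielding bounds uniform in $\nu$. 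The main obstacle, as in \cite{BHN}, is the algebra in the smoothing step: I expect that carefully tracking the sign of the principal part of $[S,\Delta]$ is what forces the coupling between $\sgn{m}$ and the $\pm$ choice of $S$, and that the bulk of the technical work lies in invoking Lemma~\ref{BeHaNa} on each of the commutator remainders so that all error terms remain inside $C\kappa\jb{t}^{-1}\|Sf\|_{L^2}^2$.
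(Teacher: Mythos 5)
Your argument is correct and is essentially the paper's own proof: an energy identity for $\|S(t)f(t)\|_{L^2}^2$, explicit computation of the commutator of $S$ with $\Delta$ (equivalently with $L_{m,\nu}$) using $\pa_{x_a}\Lambda_{\kappa,a}=\kappa w_a^2/\jb{t}$ and $[\pa_{x_a},S_{\pm,a}]=(\pa_{x_a}\Lambda_{\kappa,a})(\mp i)S_{\pm,a}\mathcal{H}_a$, identification of the signed principal part proportional to $S|\pa_{x_a}|$, conversion to the half-derivative norm via Lemma~\ref{BeHaNa}, and absorption of all remainders (your treatment of the $\nu$-cross term via the dissipation $-2\nu\|\nabla Sf\|_{L^2}^2$ is a harmless variant of the paper's absorption into the smoothing term). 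Two details to fix in a full write-up: with the paper's convention $\widehat{\mathcal{H}_af}=-i\sgn{\xi_a}\widehat{f}$ one has $\mathcal{H}_a\pa_{x_a}=+|\pa_{x_a}|$, not $-|\pa_{x_a}|$ (the sign you quote would flip the principal part and destroy the estimate), and the principal coefficient is really $2\kappa/(|m|\jb{t})$ rather than $\kappa/(|m|\jb{t})$ --- the spare factor is needed because the Lemma~\ref{BeHaNa} commutator errors produce a small multiple of $\|w_aS|\pa_{x_a}|^{1/2}f\|_{L^2}^2$ that must be absorbed into the principal term, not merely $C\kappa\jb{t}^{-1}\|Sf\|_{L^2}^2$.
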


\begin{proof}[\underline{Proof of Lemma~\ref{Lem_smoothing2}}]
As in the usual energy method, we first compute 
\begin{align*}
 \frac{1}{2}\frac{d}{dt}\|Sf\|_{L^2}^2
 =&
  \imagpart\Jb{L_{m, \nu} Sf,Sf}_{L^2}-\nu\|\nabla Sf\|_{L^2}^2\\
 \le&
 \imagpart \Jb{SL_{m, \nu} f,Sf}_{L^2} 
 +
  \imagpart \Jb{[L_{m, \nu}, S]f, Sf}_{L^2}.
\end{align*}
Also the straightforward calculation yields 
\begin{align*}
 [L_{m, \nu},S]
 =
 -\biggl(\frac{i}{|m|}+2\sgn{m}\nu\biggr)
  \sum_{a=1}^{d}(\pa_{x_a}{\Lambda}_{\kappa, a})S|\pa_{x_a}|
 +\sum_{a=1}^{d}R_a,
\end{align*}
where
\begin{align*}
 R_a 
 =& 
 \sgn{m} (\pa_t {\Lambda}_{\kappa, a}) S\mathcal{H}_a \\
 &+
  \biggl(\frac{1}{2m}-i\nu\biggr)
  \Bigl\{
   (\pa_{x_a} {\Lambda}_{\kappa, a})^2S  
    - 
   i\sgn{m}(\pa_{x_a}^2 {\Lambda}_{\kappa, a})S \mathcal{H}_a
  \Bigr\}.
\end{align*}
Remark that $|\pa_{x_a}|=\mathcal{H}_a\pa_{x_a}=\pa_{x_a}{\mathcal H}_a$ and 
$\mathcal{H}_a^2=-1$. Since 
$$
  |\pa_{t} {\Lambda}_{\kappa, a}| 
  + 
  |\pa_{x_a} {\Lambda}_{\kappa, a}|^2 
  + 
  |\pa_{x_a}^2 {\Lambda}_{\kappa, a}| 
  \le  
  \frac{C\kappa}{\jb{t}}+ \frac{C\kappa^2}{\jb{t}^2}
  \le  
  \frac{C\kappa}{\jb{t}}
$$
and $|-i\nu+\frac{1}{2m}|\le 1+\frac{1}{2|m|}$,
we have 
$$
 \|R_a f\|_{L^2} 
 \le 
 \frac{C\kappa}{\jb{t}}\|f\|_{L^2}
 \le 
 \frac{C\kappa}{\jb{t}}\|Sf\|_{L^2}.
$$
We also note that 
$$
 \pa_{x_a} {\Lambda}_{\kappa, a}=\frac{\kappa}{\jb{t}}w_a^2.
$$
Summing up, we obtain
\begin{align*}
 & \frac{d}{dt}\|Sf\|_{L^2}^2
 +
 \frac{2\kappa}{|m| \jb{t}}\sum_{a=1}^{d}
  \realpart \jbf{w_a S|\pa_{x_a}|f,w_a Sf }_{L^2}
\\ 
&\le 
 2\left|\jbf{SL_{m, \nu} f,Sf}_{L^2} \right|
 +
 \frac{C\kappa}{\jb{t}} \| Sf\|_{L^2}^2
 +\frac{4\kappa}{\jb{t}}\sum_{a=1}^{d}
  \bigl|\imagpart \jbf{w_a S|\pa_{x_a}|f,w_a Sf }_{L^2}\bigr|.
\end{align*}
Since we have
\begin{align*}
 &w_aS|\pa_{x_a}|f\\
 &= \pa_{x_a}\left(w_a S {\mathcal H}_af\right)
 +
 \left[w_aS, \pa_{x_a}\right]{\mathcal H}_af\\
 &= 
-|\pa_{x_a}|^{\frac{1}{2}}
 \left(
  \left[|\pa_{x_a}|^{\frac{1}{2}}{\mathcal H}_a, w_a S\right]{\mathcal H}_a f
  -
  w_aS|\pa_{x_a}|^{\frac{1}{2}}f
 \right)
+
\left[w_aS, \pa_{x_a}\right]{\mathcal H}_af,
\end{align*}
we get
\begin{align*}
& \bigl\| w_a S|\pa_{x_a}|^{\frac{1}{2}} f \bigr\|_{L^2}^2
-
 \JB{w_a S|\pa_{x_a}|f, w_a Sf}_{L^2}\\
& =
\jbf{
  w_aS|\pa_{x_a}|^{\frac{1}{2}}f, \left[w_aS,|\pa_{x_a}|^{\frac{1}{2}}\right]f
 }_{{L^2}}
+
 \jbf{
 \left[|\pa_{x_a}|^{\frac{1}{2}}{\mathcal H}_a,  w_a S\right]{\mathcal H}_a f,
 w_aS|\pa_{x_a}|^{\frac{1}{2}}f
 }_{{L^2}}\\
& \quad 
 +
 \jbf{
  \left[|\pa_{x_a}|^{\frac{1}{2}}{\mathcal H}_a, w_a S\right]{\mathcal H}_a f,
  \left[|\pa_{x_a}|^{\frac{1}{2}},w_aS\right]f
 }_{{L^2}}
 -
 \jbf{\left[w_aS,\pa_{x_a}\right]{\mathcal H}_a f, w_a S f}_{{L^2}}.
\end{align*}
Using Lemma~\ref{BeHaNa} with $g=w_a\cosh {\Lambda}_{\kappa,a}$, 
$w_a\sinh {\Lambda}_{\kappa,a}$, etc., 
we can show that all the commutators above are bounded operators on $L^2$. 
Hence we obtain
\begin{align*}
\bigl\| w_a S|\pa_{x_a}|^{\frac{1}{2}} f \bigr\|_{L^2}^2
 &-
 \realpart \JB{w_a S|\pa_{x_a}|f, w_a Sf}_{L^2}\\
&\le 
 C\bigl\| w_a S|\pa_{x_a}|^{\frac{1}{2}} f \bigr\|_{L^2} \|f\|_{L^2}
 +
 C\|f\|_{L^2}^2\\
&\le 
 \frac{1}{4}\bigl\| w_a S|\pa_{x_a}|^{\frac{1}{2}} f \bigr\|_{L^2}^2
 +
 C\|Sf\|_{L^2}^2,
\end{align*}
and 
\begin{align*}
\left|\imagpart \jbf{w_a S|\pa_{x_a}|f, w_a Sf}_{L^2}\right|
&
\le 
 C\bigl\| w_a S|\pa_{x_a}|^{\frac{1}{2}} f \bigr\|_{L^2} \|f\|_{L^2}
 +
 C\|f\|_{L^2}^2\\
&\le 
 \frac{1}{8|m|}\bigl\| w_a S|\pa_{x_a}|^{\frac{1}{2}} f \bigr\|_{L^2}^2
 +
 C\|Sf\|_{L^2}^2.
\end{align*}
Therefore we get
\begin{align*}
& \frac{d}{dt}\|Sf\|_{L^2}^2+\frac{\kappa}{|m|\jb{t}}\sum_{a=1}^d 
 \left\|w_aS|\pa_{x_a}|^{\frac{1}{2}}f\right\|_{L^2}^2
\le 2\left|\jb{S L_{m,\nu}f, S f}_{L^2}\right|+\frac{C\kappa}{\jb{t}}\|Sf\|_{L^2}^2.
\end{align*}
Finally, integrating this inequality, we obtain the desired result.
\end{proof}
 

\subsection{Proof of Lemma~\ref{Lem_aux}}

     %
     %
We follow the similar line as the proof of Lemma~{2.3} in \cite{BHN}. 
     %
Denoting by ${S}^*$ the adjoint operator of $S$, we have
\begin{align*}
&\jbf{
 S f, S(g\pa_{x_a}h)}_{L^2}\\
 &= \jbf{\overline{g}S^* S f, 
 (\pa_{x_a}w_a^{-1})w_a h}_{L^2}
{}
 - 
 \jbf{
  |\pa_{x_a}|^{\frac{1}{2}}\left(w_a^{-1}\overline{g}S^*Sf \right), 
  {\mathcal H}_a|\pa_{x_a}|^{\frac{1}{2}}\left(w_a h \right)
 }_{L^2}\\
&=:I_1+I_2,
\end{align*}
where we have used the identity 
\begin{align*}
\pa_{x_a}h
&=
(\pa_{x_a}w_a^{-1})w_a h + w_a^{-1}\pa_{x_a}(w_a h)\\
&=
(\pa_{x_a}w_a^{-1})w_a h 
- 
w_a^{-1}|\pa_{x_a}|^{\frac{1}{2}}
{\mathcal H}_a|\pa_{x_a}|^{\frac{1}{2}}(w_a h).
\end{align*}
It is easy to see 
$|I_1|\le C\|g\|_{L^\infty}\|f\|_{L^2}\|h\|_{L^2}$.
Since $\mathcal H_a (S')^{-1}$ is a bounded operator on $L^2$ and 
${\mathcal H}_a=(\mathcal H_a(S')^{-1})S'$, we get
\begin{align*}
\left\|
 {\mathcal H}_a |\pa_{x_a}|^{\frac{1}{2}}\left(w_a h \right)
\right\|_{L^2}
\le& 
C\left\|S'|\pa_{x_a}|^{\frac{1}{2}}\left(w_a h \right)\right\|_{L^2}
\\
\le & 
C\left(
\left\|w_a S'|\pa_{x_a}|^{\frac{1}{2}}h \right\|_{L^2}
+
\left\|\left[ S'|\pa_{x_a}|^{\frac{1}{2}}, w_a\right] h \right\|_{L^2}
\right).
\end{align*}
Using Lemma~\ref{BeHaNa}, one sees that the second term on the right-hand side 
can be bounded by $C\|h\|_{L^2}$. Writing
\begin{align*}
|\pa_{x_a}|^{\frac{1}{2}} 
  \left( w_a^{-1}\overline{g} S^* S f \right)
=&
 \left[|\pa_{x_a}|^{\frac{1}{2}}, w_a^{-1}\overline{g}\right] S^* S f
{}+ 
 w_a^{-1}\overline{g} \left[|\pa_{x_a}|^{\frac{1}{2}}, S^*\right] Sf\\
& {}+
 w_a^{-2}\overline{g}\left[ w_a, S^* |\pa_{x_a}|^{\frac{1}{2}}\right] S f
{}+
 w_a^{-2}\overline{g} S^*
 \left(\left[|\pa_{x_a}|^{\frac{1}{2}}, w_a\right] S f \right)\\
&{}+
 w_a^{-2}\overline{g}S^*
 \left(w_a\left[|\pa_{x_a}|^{\frac{1}{2}}, S \right] f \right)
 +
 w_a^{-2}\overline{g} S^* \left(w_a S|\pa_{x_a}|^{\frac{1}{2}}f \right),
\end{align*}
and using Lemma~\ref{BeHaNa} to estimate the commutators, we obtain
\begin{align*}
 &\left\|
  |\pa_{x_a}|^{\frac{1}{2}} 
  \left( w_a^{-1}\overline{g} S^* S f \right)
 \right\|_{L^2}\\
 &\le 
 C\left(
   \left\|w_a^{-2}g\right\|_{L^\infty}
   +
   \left\|w_a^{-1}\pa_{x_a}g\right\|_{L^\infty}
 \right)
 \left(
  \left\|f \right\|_{L^2}
  +
  \left\|w_a S|\pa_{x_a}|^{\frac{1}{2}}f \right\|_{L^2}
 \right).
\end{align*}
To sum up, we obtain
\begin{align*}
 \bigl| \jbf{S f, S(g \pa_{x_a} h)}_{L^2} \bigr|
 \le&
 C\left(
  \left\| \jbf{\frac{x_a}{\jb{t}}}^{2} g \right\|_{L^{\infty}}
 + 
  \left\| \jbf{\frac{x_a}{\jb{t}}} \pa_{x_a} g \right\|_{L^{\infty}}
 \right)
 \nonumber\\
 &\times
 \left(
  \left\| w_a S|\pa_{x_a}|^{\frac{1}{2}}f \right\|_{L^2} + \|f\|_{L^2}
 \right)\left(
  \left\| w_a S'|\pa_{x_a}|^{\frac{1}{2}}h \right\|_{L^2}  + \|h\|_{L^2}
 \right).
\end{align*}
Similarly, we have 
\begin{align*}
 \bigl| \jbf{S f, S(g \overline{\pa_{x_a} h})}_{L^2} \bigr|
 \le&
 C\left(
  \left\| \jbf{\frac{x_a}{\jb{t}}}^{2} g \right\|_{L^{\infty}}
 + 
  \left\| \jbf{\frac{x_a}{\jb{t}}} \pa_{x_a} g \right\|_{L^{\infty}}
 \right)
 \nonumber\\
 &\times
 \left(
  \left\| w_a S|\pa_{x_a}|^{\frac{1}{2}}f \right\|_{L^2}
  +
  \|f\|_{L^2}
 \right)\left(
  \left\| w_a S'|\pa_{x_a}|^{\frac{1}{2}}h \right\|_{L^2}
  +
  \|h\|_{L^2}
 \right).
\end{align*}
On the other hand, it follows from the relation 
$\frac{x_a}{\jb{t}}=\frac{1}{\jb{t}}J_{m,a} -\frac{it}{m\jb{t}} \pa_{x_a}$ 
and Lemma~\ref{Lem_kla} that 
\begin{align*}
 \left\| \jbf{\frac{x_a}{\jb{t}}}^{2} g \right\|_{L^{\infty}}
 {}+
  \left\| \jbf{\frac{x_a}{\jb{t}}} \pa_{x_a} g \right\|_{L^{\infty}} 
 &\le
 C \sum_{|\alpha|\le 2}\|\Gamma_m^{\alpha} g \|_{L^{\infty}}\\
 &\le 
 \frac{C}{\jb{t}^{\frac{d}{2}}} 
 \sum_{|\alpha|\le [\frac{d}{2}]+3} \|\Gamma_m^{\alpha} g \|_{L^{2}}.
\end{align*}
By piecing them together, we arrive at the desired conclusion.
\qed

\subsection{Proof of Lemma~\ref{Lem_local}}

Before we proceed to the proof of Lemma~\ref{Lem_local}, 
we give several lemmas.
Concerning the uniqueness, using Lemma~\ref{Lem_QLE}, and going similar lines to
the proof of Lemma~\ref{Lem_est_high}, we obtain the following
(observe that $\left[\frac{s}{2}\right]+\left[\frac{d}{2}\right]+2=s$ if
$s=2\left[\frac{d}{2}\right]+3$).

\begin{lem}\label{SchUniqueness} 
Suppose that $d\ge 2$. Let $t_0\in \R$ and $T>0$. 
There is a positive constant $\delta_0$, which is independent of $t_0$ and $T$,
such that if $u$ and $\widetilde{u}$ be solutions to \eqref{Eq}--\eqref{Data2} 
satisfying 
$$
u, \widetilde{u} 
\in 
C\bigl([t_0, t_0+T]; \Sigma^{2\left[\frac{d}{2}\right]+3}(\R^d)\bigr)
 \cap 
 L^\infty\bigl((t_0, t_0+T); \Sigma^{2\left[\frac{d}{2}\right]+4}(\R^d)\bigr),
$$
and if
$$
\sup_{t_0\le t\le t_0+T} \|u(t,\cdot)\|_{\Gamma(t), \left[\frac{d}{2}\right]+4}
\le \delta_0,
$$
then we have
$u(t,x)=\widetilde{u}(t,x)$ for all $(t,x)\in [t_0, t_0+T]\times \R^d$.
\end{lem}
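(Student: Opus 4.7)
The plan is to linearize: setting $v=u-\widetilde u$, I would verify that $v$ satisfies a system of the form \eqref{QLS} so that Lemma~\ref{Lem_QLE} applies, and then conclude by Gronwall's inequality that $v\equiv 0$. Because the nonlinearity \eqref{Nonlin} is bilinear in $(u,\pa_x u)$, each difference $F_j(u,\pa_x u)-F_j(\widetilde u,\pa_x\widetilde u)$ splits into terms of the shape $(\text{coefficient made from }u\text{ or }\widetilde u)\times(v_k,\overline{v_k}\text{ or their first derivatives})$; for instance
\[
F_3(u,\pa_x u)-F_3(\widetilde u,\pa_x\widetilde u)
=\sum_{|\alpha|,|\beta|\le 1}\Co{3}{\alpha}{\beta}
 \bigl\{(\pa^\alpha v_1)(\pa^\beta u_2)+(\pa^\alpha\widetilde u_1)(\pa^\beta v_2)\bigr\},
\]
and likewise for $j=1,2$ (where conjugation appears). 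Collecting the coefficients of $\pa_{x_a}v_k$ and $\pa_{x_a}\overline{v_k}$ casts the system into the form
\[
L_{m_j}v_j
=\sum_{k=1}^{3}\sum_{a=1}^{d}\bigl(\Phi_{jk,a}\pa_{x_a}v_k+\Psi_{jk,a}\pa_{x_a}\overline{v_k}\bigr)+\Theta_j,
\]
with $\Phi_{jk,a},\Psi_{jk,a}$ linear in $\pa^\beta u_\ell,\pa^\beta\widetilde u_\ell$ ($|\beta|\le 1$), and $\Theta_j$ collecting the zero-order contributions in $v_k,\overline{v_k}$. The mass label $\lambda_{jk}$ (resp.\ $\mu_{jk}$) is chosen as the mass $m_\ell$ (or $-m_\ell$, using the identity $\overline{J_m(t)^\gamma f}=J_{-m}(t)^\gamma\bar f$) of the factor appearing in the corresponding coefficient.

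Next I would verify the smallness hypothesis on $e_{t_0,T}$. Because each $\Phi_{jk,a},\Psi_{jk,a}$ involves at most one derivative of $u$ or $\widetilde u$, the commutation relations \eqref{Comm} give, for $|\gamma|\le[d/2]+3$,
\[
\|\Gamma_{\lambda_{jk}}(t)^\gamma\Phi_{jk,a}\|_{L^2}+\|\Gamma_{\mu_{jk}}(t)^\gamma\Psi_{jk,a}\|_{L^2}
\le C\bigl(\|u(t)\|_{\Gamma,[d/2]+4}+\|\widetilde u(t)\|_{\Gamma,[d/2]+4}\bigr)\le 2C\delta_0,
\]
so taking $\delta_0$ below the threshold $\delta/(2C)$ activates Lemma~\ref{Lem_QLE}. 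A similar argument combined with Lemma~\ref{Lem_kla} bounds $\|\Theta(t)\|_{L^2}\le C\delta_0\jb{t}^{-d/2}\|v(t)\|_{L^2}$, which can be absorbed into the same weighted integrand since $d\ge 2$. With $v(t_0)=0$, Lemma~\ref{Lem_QLE} then yields
\[
\|v(t)\|_{L^2}^2\le C\int_{t_0}^{t}\frac{\delta_0}{\jb{\tau}}\|v(\tau)\|_{L^2}^2\,d\tau
\qquad (t\in [t_0,t_0+T]),
\]
and Gronwall's inequality forces $v\equiv 0$, whence $u\equiv\widetilde u$.

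The main technical point will be verifying the regularity required by Lemma~\ref{Lem_QLE}, namely $\Phi_{jk,a},\Psi_{jk,a}\in{\mathcal B}^1([t_0,t_0+T]\times\R^d)\cap C([t_0,t_0+T];\Sigma^{[d/2]+3}(\R^d))$. Spatial $C^1$-boundedness follows by Sobolev embedding from $u,\widetilde u\in L^\infty(I;\Sigma^{2[d/2]+4}(\R^d))$; the $\Sigma^{[d/2]+3}$-continuity in $t$ follows from $u,\widetilde u\in C(I;\Sigma^{2[d/2]+3}(\R^d))$ together with the inequality $2\bigl[\tfrac{d}{2}\bigr]+2\ge\bigl[\tfrac{d}{2}\bigr]+3$, which is exactly the condition $d\ge 2$; and $C^1$-boundedness in $t$ is obtained by using \eqref{Eq} to replace $\pa_t u_j$ by $\tfrac{i}{2m_j}\Delta u_j-iF_j(u,\pa_x u)$ and again applying Sobolev embedding. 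This is the step where the hypothesis $d\ge 2$ enters in an essential way.
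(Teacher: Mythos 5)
Your overall strategy --- set $v=u-\widetilde u$, recast the difference of the nonlinearities as a linear system of the form \eqref{QLS}, verify the hypotheses of Lemma~\ref{Lem_QLE}, and conclude by Gronwall from $v(t_0)=0$ --- is exactly the route the paper takes (its proof is essentially a pointer to Lemma~\ref{Lem_QLE} and the decomposition used in Lemma~\ref{Lem_est_high}), and your regularity discussion for $\Phi_{jk,a},\Psi_{jk,a}$ is sound. However, there is one genuine gap: when you verify the smallness of $e_{t_0,T}$ you bound the coefficient norms by $C\bigl(\|u(t)\|_{\Gamma,[d/2]+4}+\|\widetilde u(t)\|_{\Gamma,[d/2]+4}\bigr)\le 2C\delta_0$, but the lemma assumes the bound $\le\delta_0$ only for $u$, not for $\widetilde u$. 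The decomposition $ab-\widetilde a\widetilde b=(a-\widetilde a)b+\widetilde a(b-\widetilde b)$ unavoidably produces coefficients of $\pa_{x_a}v_k$ (and of $\pa_{x_a}\overline{v_k}$) built from $\widetilde u$, and no rearrangement removes them without creating terms quadratic in $\pa_x v$, which cannot be absorbed into $\Theta$. This asymmetry is not cosmetic: in the final step of the proof of Lemma~\ref{Lem_local}, uniqueness is invoked against an \emph{arbitrary} competitor $\widetilde u\in C([t_0,t_0+T_s^*];\Sigma^s)$, for which no smallness is known a priori, so a proof that silently assumes $\sup_t\|\widetilde u(t)\|_{\Gamma,[d/2]+4}\le\delta_0$ proves a strictly weaker statement than what is needed.

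The gap is repairable by a continuation argument, and you should make it explicit. Since $d\ge 2$ gives $\left[\frac{d}{2}\right]+4\le 2\left[\frac{d}{2}\right]+3$, the map $t\mapsto\|\widetilde u(t)\|_{\Gamma(t),[d/2]+4}$ is continuous on $[t_0,t_0+T]$, and at $t=t_0$ it equals $\|u(t_0)\|_{\Gamma(t_0),[d/2]+4}\le\delta_0$ because both solutions share the datum $\psi$. Hence on a maximal subinterval $[t_0,t_0+\tau^*]$ one has $\|\widetilde u(t)\|_{\Gamma,[d/2]+4}\le 2\delta_0$; on that subinterval your argument (with $\delta_0$ chosen so that $3C\delta_0\le\delta$, say) applies verbatim and yields $v\equiv 0$, whence $\|\widetilde u(t)\|_{\Gamma,[d/2]+4}=\|u(t)\|_{\Gamma,[d/2]+4}\le\delta_0<2\delta_0$ up to $t_0+\tau^*$; if $\tau^*<T$ this contradicts maximality, so $\tau^*=T$ and uniqueness holds on the whole interval. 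A second, minor point: in your bound for $\Theta$ the constant should not be written as $C\delta_0$, since $\Theta$ also carries coefficients $\pa^\beta\widetilde u_\ell$ controlled only by the $L^\infty(I;\Sigma^{2[d/2]+4})$ bound on $\widetilde u$ (via the norm equivalence on the bounded interval); this is harmless because Gronwall does not require that constant to be small.
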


In what follows, for each $j\in \{1,2,3\}$, we put $S_j=S_+(t;1)$ if $m_j>0$,
and $S_j=S_-(t;1)$ if $m_j<0$.
Using Lemma~\ref{Lem_smoothing2} instead of Lemma~\ref{Lem_smoothing}, we can easily modify the proof of Lemma~\ref{Lem_QLE} to obtain
the following (compare \eqref{Est_S_v} with \eqref{QLE21}).

\begin{lem}\label{Lem_QLE2}
Let $d\ge 2$ and $\nu\in (0,1]$. Suppose that $\Phi_{jk,a}$, 
$\Psi_{jk,a}$ and $\Theta=(\Theta_j)_{j=1,2,3}$
be as in Lemma~$\ref{Lem_QLE}$. Let $e_{t_0,T}$ be defined as in 
Lemma~\ref{Lem_QLE} 
with given constants $\lambda_{jk}, \mu_{jk} \in \R\setminus\{0\}$.
Suppose that $v=(v_j)_{j=1,2,3}\in C\bigl([t_0,t_0+T];L^2(\R^d)\bigr)$ satisfies
$$
L_{m_j,\nu} v_j
=\sum_{k=1}^3\sum_{a=1}^d
(\Phi_{jk,a}\pa_{x_a}v_k+\Psi_{jk,a}\pa_{x_a}\overline{v_k})
+\Theta_j.
$$
Then there is a positive constant $\delta$ such that $e_{t_0,T}\le \delta$
implies
\begin{align}
 \sum_{j=1}^3\|S_j(t) v_j(t)\|_{L^2}^2
 \le & 
 \sum_{j=1}^3\|S_j(t_0) v_j(t_0)\|_{L^2}^2 \nonumber\\
 &+
 C\int_{t_0}^t \left(\frac{\|v(\tau)\|_{L^2}^2}{\jb{\tau}}
 +
 \|v(\tau)\|_{L^2}\|\Theta(\tau)\|_{L^2}\right)d\tau
 \label{QLE21}
\end{align}
and
\begin{align}
 \|v(t)\|_{L^2}^2
 \le & 
 C \|v(t_0)\|_{L^2}^2
 +
 C \int_{t_0}^t \left(\frac{\|v(\tau)\|_{L^2}^2}{\jb{\tau}}
 +
 \|v(\tau)\|_{L^2}\|\Theta(\tau)\|_{L^2}\right)d\tau
 \label{QLE22}
\end{align}
for $t\in [t_0, t_0+T]$. Here the positive constants $\delta$ and $C$ depend only on 
$|m_j|$, $|\lambda_{jk}|$ and $|\mu_{jk}|$. 
In particular, they are independent of $\nu\in (0,1]$, $t_0\in \R$, and 
$T$.
\end{lem}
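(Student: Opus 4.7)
The plan is to adapt the proof of Lemma~\ref{Lem_QLE} with two modifications dictated by the hypotheses: the operator $L_{m_j}$ is replaced by $L_{m_j,\nu}$, and the smoothing parameter $\kappa$ is now fixed at $\kappa=1$ rather than tuned proportionally to $e_{t_0,T}$. The enabling ingredient is Lemma~\ref{Lem_smoothing2}, which is precisely the $L_{m,\nu}$-analogue of Lemma~\ref{Lem_smoothing} and is uniform in $\nu\in[0,1]$.

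First, I would apply Lemma~\ref{Lem_aux} term by term to the pairing $\langle S_j v_j,S_j L_{m_j,\nu}v_j\rangle_{L^2}$ obtained by substituting the differential equation for $v_j$. Since the coefficients $\Phi_{jk,a}, \Psi_{jk,a}$ have $\Gamma_{\lambda_{jk}}$- and $\Gamma_{\mu_{jk}}$-norms bounded by $e_{t_0,T}$, this produces the key estimate
\begin{align*}
2\sum_{j=1}^3\bigl|\langle S_j v_j, S_j L_{m_j,\nu}v_j\rangle_{L^2}\bigr|
&\le \sum_{j,a}\frac{C_\ast e_{t_0,T}}{|m_j|\langle\tau\rangle^{d/2}}
\Bigl(\bigl\|w_a S_j|\pa_{x_a}|^{1/2}v_j\bigr\|_{L^2}^2+\|v_j\|_{L^2}^2\Bigr)\\
&\quad+2\sum_{j=1}^3\|S_j v_j\|_{L^2}\|S_j\Theta_j\|_{L^2},
\end{align*}
exactly as in the derivation preceding \eqref{Est_S_v}, but without the $\kappa$-freedom.

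Next, invoking Lemma~\ref{Lem_smoothing2} with $\kappa=1$ for each index $j$ and summing, I substitute the above bound into the right-hand side. Since $d\ge 2$ gives $\langle\tau\rangle^{-d/2}\le\langle\tau\rangle^{-1}$, the smoothing term on the right is dominated by $C_\ast e_{t_0,T}/(|m_j|\langle\tau\rangle)$ times $\|w_a S_j|\pa_{x_a}|^{1/2}v_j\|^2$, and is absorbed into the positive smoothing contribution $\frac{1}{|m_j|\langle\tau\rangle}\|w_a S_j|\pa_{x_a}|^{1/2}v_j\|^2$ on the left-hand side provided $e_{t_0,T}\le\delta:=1/(2C_\ast)$. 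The remaining uncontrolled pieces are $\frac{C}{\langle\tau\rangle}\|v_j\|_{L^2}^2$ and $\|S_j v_j\|_{L^2}\|S_j\Theta_j\|_{L^2}$; using the uniform bounds \eqref{EstS} on $S_\pm$ and $S_\pm^{-1}$, these convert to the right-hand side of \eqref{QLE21}. Estimate \eqref{QLE22} then follows at once by applying \eqref{EstS} once more to remove the $S_j$'s on both sides of \eqref{QLE21}.

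The main point to check is simply the absorption inequality $C_\ast e_{t_0,T}\langle\tau\rangle^{-d/2}\le \langle\tau\rangle^{-1}/(2|m_j|)$, which uses precisely $d\ge 2$ and the smallness of $e_{t_0,T}$; this is the reason the constant in front of $\|v\|_{L^2}^2/\langle\tau\rangle$ is now simply $C$ rather than $Ce_{t_0,T}$ (as in Lemma~\ref{Lem_QLE}), since with $\kappa$ fixed the $C\kappa/\langle\tau\rangle$ remainder from Lemma~\ref{Lem_smoothing2} is of order $1/\langle\tau\rangle$ and no longer benefits from the smallness of $e_{t_0,T}$. No genuine difficulty arises beyond this bookkeeping; the $\nu$-independence of the constants is automatic from the fact that Lemma~\ref{Lem_smoothing2} and Lemma~\ref{Lem_aux} are already $\nu$-independent.
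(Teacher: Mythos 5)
Your proposal is correct and follows essentially the same route as the paper, which obtains Lemma~\ref{Lem_QLE2} by rerunning the proof of Lemma~\ref{Lem_QLE} with Lemma~\ref{Lem_smoothing2} in place of Lemma~\ref{Lem_smoothing} and with $\kappa$ fixed equal to $1$, absorbing the smoothing term via $\jb{\tau}^{-d/2}\le\jb{\tau}^{-1}$ for $d\ge 2$ under the smallness condition $e_{t_0,T}\le\delta$. Your observation that the coefficient of $\|v\|_{L^2}^2/\jb{\tau}$ is now $C$ rather than $Ce_{t_0,T}$, and that the initial-data term in \eqref{QLE21} carries coefficient exactly one, matches the paper's intent (the latter being what makes Lemma~\ref{ContiLem} applicable later).
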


$C_w(I;X)$ denotes the space of $X$-valued weakly continuous functions on an interval $I$, where $X$ is a Hilbert space. 
As in the argument of Chapter 5 in \cite{Rac}, 
we can show the following lemma which we will use 
to obtain the strong continuity of the solution.

\begin{lem}\label{ContiLem}
If $f=(f_j)_{j=1,2,3}\in C_w\bigl([t_0, t_0+T]; L^2(\R^d)\bigr)$ and
$$
\limsup_{t\to t_0+0} \sum_{j=1}^3 \left\|S_j(t)f_j(t)\right\|_{L^2(\R^d)}^2\le \sum_{j=1}^3\left\|S_j(t_0)f_j(t_0)\right\|_{L^2(\R^d)}^2,
$$
then we have
$$
\lim_{t\to t_0+0} \left\|f(t)-f(t_0)\right\|_{L^2(\R^d)}=0.
$$
\end{lem}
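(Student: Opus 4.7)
The plan is to reduce the conclusion to the standard Hilbert-space fact that weak convergence together with norm convergence implies strong convergence, after transferring the problem through the operators $S_j(t)$. I will set $\psi_j(t):=S_j(t)f_j(t)$ and $\phi_j:=S_j(t_0)f_j(t_0)$, and first establish the weak convergence $\psi_j(t)\rightharpoonup \phi_j$ in $L^2(\R^d)$ as $t\to t_0+0$. Testing against $g\in L^2(\R^d)$, I would split
\[
\jb{\psi_j(t)-\phi_j,g}_{L^2}
 =\jb{f_j(t)-f_j(t_0),S_j(t_0)^{*}g}_{L^2}
 +\jb{f_j(t),\,(S_j(t)^{*}-S_j(t_0)^{*})g}_{L^2}.
\]
The first summand vanishes as $t\to t_0+$ by the assumption $f_j\in C_w$; the second is controlled by the uniform $L^2$-bound on $\{f_j(t)\}_{t\in[t_0,t_0+T]}$ (a standard consequence of weak continuity on a compact interval) combined with the strong convergence $S_j(t)^{*}g\to S_j(t_0)^{*}g$ in $L^2$. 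This last fact follows from dominated convergence applied to the bounded multiplicative factors $\cosh \Lambda_{1,a}(t,\cdot)$ and $\sinh \Lambda_{1,a}(t,\cdot)$, which are dominated by $\cosh(\pi/2)$ uniformly in $(t,x)$ and converge pointwise as $t\to t_0$.

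Next, weak lower semi-continuity of the $L^2$-norm yields $\|\phi_j\|_{L^2}^{2}\le \liminf_{t\to t_0+}\|\psi_j(t)\|_{L^2}^{2}$ for each $j$. Summing over the three indices $j$ and combining with the hypothesis $\limsup_{t\to t_0+}\sum_{j}\|\psi_j(t)\|_{L^2}^{2}\le \sum_{j}\|\phi_j\|_{L^2}^{2}$, I conclude that in fact $\|\psi_j(t)\|_{L^2}\to \|\phi_j\|_{L^2}$ for every $j$ individually. (This is the elementary observation that if nonnegative scalars $a_j(t)$ satisfy $\liminf_{t}a_j(t)\ge A_j$ and $\limsup_{t}\sum_{j}a_j(t)\le \sum_{j}A_j$ over finitely many indices, then $\lim_{t}a_j(t)=A_j$ for every $j$.) The Hilbert-space identity
\[
 \|\psi_j(t)-\phi_j\|_{L^2}^{2}
 =\|\psi_j(t)\|_{L^2}^{2}-2\realpart \jb{\psi_j(t),\phi_j}_{L^2}+\|\phi_j\|_{L^2}^{2}
\]
then yields the strong convergence $\psi_j(t)\to \phi_j$ in $L^2(\R^d)$.

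Finally, I recover $f_j(t)\to f_j(t_0)$ in $L^2$ by inverting $S_j(t)$ through the decomposition
\[
 f_j(t)-f_j(t_0)
 = S_j(t)^{-1}\bigl(\psi_j(t)-\phi_j\bigr)
   +\bigl(S_j(t)^{-1}-S_j(t_0)^{-1}\bigr)\phi_j.
\]
The first summand tends to zero in $L^2$ by the strong convergence just obtained together with the uniform bound \eqref{EstS} on $\|S_j(t)^{-1}\|_{L^2\to L^2}$. For the second summand I would use that $S_j(t)^{-1}g\to S_j(t_0)^{-1}g$ in $L^2$ for each fixed $g$, which follows by expanding the Neumann series for $(I\mp i\tanh \Lambda_{1,a}(t,\cdot)\mathcal{H}_a)^{-1}$ (convergent in operator norm uniformly in $t$ since $\tanh(\pi/2)<1$) and applying dominated convergence factor by factor. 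The only delicate point in the argument is the passage in the second paragraph from the limit of a sum of nonnegative quantities to convergence of the individual terms; once this is observed, everything else is routine Hilbert-space manipulation combined with the explicit pointwise convergence of the symbols defining $S_j(t)$.
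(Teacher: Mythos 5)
Your proof is correct and is essentially the standard argument the paper invokes by citing Chapter 5 of Racke's book (the paper gives no proof of its own): weak convergence of $S_j(t)f_j(t)$ plus upper semicontinuity of the summed norms forces norm convergence of each component, hence strong convergence, which is then transferred back through the uniformly bounded inverses $S_j(t)^{-1}$. All the steps, including the passage from the limsup of the sum to convergence of the individual norms and the strong continuity of $t\mapsto S_j(t)^{*}g$ and $t\mapsto S_j(t)^{-1}g$, are handled correctly.
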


\begin{proof}[\underline{Proof of Lemma~\ref{Lem_local}}]
%
We follow the approach of \cite{Chi1}, \cite{Chi2}, \cite{Chi4} 
(see also the appendix of \cite{KT}). 
The proof is divided into three steps:

{\bf Step 1.}
First we consider the auxiliary problem 
\begin{equation}
 \left\{\begin{array}{ll}
L_{m_j,\nu}u_j^\nu
  =
  F_j(u^{\nu},\pa_x u^{\nu}), 
 &t>t_0,\ x \in \R^d,\ j=1,2,3,\\
 u_j^{\nu}(t_0,x)=\psi_j(x), 
 &x\in \R^d,\ j=1,2,3
\end{array}\right.
 \label{eq_aux}
\end{equation}
with $\nu \in (0,1]$. 
Due to the stronger smoothing property of $e^{\nu t \Delta}$, 
we can easily solve it in some interval $[t_0,t_0+T_{\nu}]$ 
by the standard contraction mapping principle.
More precisely, if $\psi=(\psi_j)_{j=1,2,3}\in\Sigma^s(\R^d)$ with some
$s\ge 2\left[\frac{d}{2}\right]+4$, we have the solution
$$
 u^\nu=(u_j^\nu)_{j=1,2,3}\in C\bigl([t_0, t_0+T_\nu]; \Sigma^{s}(\R^d)\bigr)
$$ 
with
$\nabla \Gamma_{m_j}^\alpha u_j^{\nu}\in L^2\bigl((t_0, t_0+T_\nu)\times \R^d\bigr)$
for $j\in\{1,2,3\}$ and $\alpha\in (\Z_+)^{2d}$ 
satisfying $|\alpha| \le s$,
where $T_\nu$ is a positive number depending only on $\nu$ and $\|\psi\|_{\Gamma(t_0), 2\left[\frac{d}{2}\right]+4}$. Observe that we have $\left[\frac{k+1}{2}\right]+\left[\frac{d}{2}\right]+2\le k$ for $k\ge 2\left[\frac{d}{2}\right]+4$, which enables us to estimate $\sup_{t\in [t_0, t_0+T]} \|u(t)\|_{\Gamma, k}$ for $k\ge 2\left[\frac{d}{2}\right]+5$
inductively if we have the estimate for 
$\sup_{t\in [t_0,t_0+T]}\|u(t)\|_{\Gamma, 2\left[\frac{d}{2}\right]+4}$.

{\bf Step 2.}  
Next we show that we can solve \eqref{eq_aux} up to some time which is independent of $\nu$.
For a solution $u^\nu$ to \eqref{eq_aux} in $[t_0, t_0+T]$ and $k\in \Z_+$, 
we put
$$
E_{\nu, k}(T)=\sup_{t\in[t_0, t_0+T]} \|u^{\nu}(t)\|_{\Gamma, k}.
$$
For simplicity of exposition, we write 
$S_j\Gamma_{m_j}^\alpha u_j(t)$ and $(S_j\Gamma_{m_j}^\alpha)(t_0) \psi_j$ 
for 
$S_j(t)\Gamma_{m_j}(t)^\alpha u_j(t)$ and 
$S_j(t_0)\Gamma_{m_j}(t_0)^\alpha \psi_j$, respectively. 
Let $\delta_0$ be the constant coming from Lemma~\ref{SchUniqueness}.
The next lemma is the goal of this step.

\begin{lem}\label{LE_approx} 
{\rm (i)}\ Let $d\ge 2$, $t_0\in\R$, and $B>0$. 
For any $s\ge 2\left[\frac{d}{2}\right]+4$, 
there is a positive constant $\delta_s(\le \delta_0)$,
which is independent of $t_0$ and $B$, such that
for any $\nu \in (0,1]$ and any $\psi\in \Sigma^s(\R^d)$ with
$$
\|\psi\|_{\Gamma(t_0), \left[\frac{d}{2}\right]+4}\le \eps < \delta_s
$$
and $\|\psi\|_{\Gamma(t_0), 2\left[\frac{d}{2}\right]+4}\le B$,
the initial value problem \eqref{eq_aux} admits a unique solution 
$u^\nu\in C\bigl([t_0, t_0+T_s^*]; \Sigma^s(\R^d)\bigr)$
with $\nabla \Gamma_{m_j}^\alpha u_j^\nu\in L^2\bigl((t_0, t_0+T_s^*)\times \R^d\bigr)$ for
any $|\alpha|\le s$ and $j=1,2,3$, where $T_s^*=T_s^*(\eps, B)$ is a positive constant which can be determined only by $s$, $\eps$ and $B$, and is independent of $t_0$ and $\nu$. 
Moreover, there are positive constants $C_s$ and $D_s$, which are independent of $\nu$, such that
\begin{align}
\label{Boundedness00}
E_{\nu, \left[\frac{d}{2}\right]+4}(T_s^*) \le & \frac{\eps+\delta_s}{2}(<\delta_s),\\
\label{Boundedness01}
E_{\nu, s}(T_s^*)\le & C_s,\\
\label{Boundedness02}
\sum_{j=1}^3\left\|S_j\Gamma_{m_j}^\alpha u_j^{\nu}(t)\right\|_{L^2}^2
\le & \sum_{j=1}^3 \left\|(S_j\Gamma_{m_j}^\alpha)(t_0)\psi_j\right\|^2_{L^2}+D_s(t-t_0),\quad |\alpha|\le s
\end{align}
for $t\in [t_0, t_0+T_s^*]$.
\\
{\rm (ii)}\ 
If we replace $m_j$ by $-m_j$ , and $F_j(u,\pa_x u)$ by $-F_j(u,\pa_x u)$ for 
$j=1,2,3$ in \eqref{eq_aux}, then the assertion of {\rm (i)} remains true 
with the same constants.
\end{lem}

\begin{proof}
We put $s_0=\left[\frac{d}{2}\right]+4$ and $s_1=2\left[\frac{d}{2}\right]+4$.
Let $u^\nu$ be the solution to \eqref{eq_aux} for some $T>0$.
We write
$u_j^{\nu, (\alpha)}=\Gamma_{m_j}^\alpha u_j^\nu$
for $|\alpha| \le s$, and $u^{\nu,(\alpha)}=\bigl(u_j^{\nu,(\alpha)}\bigr)_{j=1,2,3}$. 
Then we have
$$
L_{m_j, \nu} u_j^{\nu, (\alpha)}= \Gamma_{m_j}^\alpha \bigl(F_j(u^\nu,\pa_xu^\nu)\bigr)-i\nu[\Delta, \Gamma_{m_j}^\alpha]u^{\nu}_j.
$$
We modify the proof of Lemma~\ref{Lem_est_high}:
Let $g_{j,a}^{\nu, \alpha}$ and $h_{j,a}^{\nu,\alpha}$ be 
given by replacing $u_k$ with $u_k^{\nu}$ 
in the definition of $g_{j,a}^\alpha$ and $h_{j,a}^\alpha$, 
respectively. Let $\lambda_j$ and $\mu_j$ be defined as in the proof of 
Lemma~\ref{Lem_est_high}. Then, similarly to \eqref{Est_coeff1}, we get
$$
 \sum_{|\alpha|\le s}\sum_{j=1}^3\sum_{a=1}^d
 \sum_{|\gamma|\le [\frac{d}{2}]+3} 
 \Bigl(
 \|\Gamma_{\lambda_j}^{\gamma} g_{j, a}^{\nu, \alpha}(t)\|_{L^2}
 +
 \|\Gamma_{\mu_j}^{\gamma} h_{j, a}^{\nu, \alpha}(t)\|_{L^2}
 \Bigr)
 \le 
 C_s^*E_{s_0}(T)
$$
with some positive constant $C_s^*$. Let $\delta$ be the constant from 
Lemma~\ref{Lem_QLE2}. If $C_s^*E_{s_0}(T)\le \delta$, 
then \eqref{QLE22} in Lemma~\ref{Lem_QLE2} leads to
\begin{align}
\|u^{\nu}(t)\|_{\Gamma, k}^2
 \le & 
 A_s\left(
   \|\psi\|_{\Gamma(t_0),k}^2
   + 
   \left( 
      1+E_{\nu, \left[\frac{k}{2}\right]+\left[\frac{d}{2}\right]+2}(T)
   \right)  
   \int_{t_0}^t\|u^\nu(\tau)\|_{\Gamma,k}^2d\tau
 \right)
\label{Ene05}
\end{align}
for $0\le k\le s$ with some positive constant $A_s(\ge 1)$.
We set $F_j^\nu=F_j\bigl(u^\nu, \pa_x u^\nu\bigr)$.
Similarly to the proof of Lemma~\ref{Lem_est_low}, 
the standard energy inequality implies
\begin{align}
 \|u^\nu(t)\|_{\Gamma, s_0}^2
 \le &
 \|\psi(t_0)\|_{\Gamma(t_0), s_0}^2
 +
 2\sum_{j=1}^3\sum_{|\alpha|\le s_0}
  \int_{t_0}^t 
  \left| \jbf{
    \Gamma_{m_j}^\alpha F_j^\nu(\tau), \Gamma_{m_j}^\alpha u_j^\nu(\tau)
   }_{L^2} \right| d\tau
\nonumber\\
\le &
\|\psi(t_0)\|_{\Gamma(t_0), s_0}^2+C'\int_{t_0}^t
\|u^\nu(\tau)\|_{\Gamma, s_1}^2\|u^\nu(\tau)\|_{\Gamma, s_0}d\tau
\label{Ene06}
\end{align}
with a positive constant $C'$, where we have used
$
s_0+1\le s_1$ and $\left[\frac{s_0}{2}\right]+\left[\frac{d}{2}\right]+2\le s_1
$ 
for $d\ge 2$.

We put
$$
\delta_s:=\frac{\delta}{C_s^*}.
$$
Suppose that $\|\psi\|_{\Gamma(t_0), s_0}\le \eps<\delta_s$ and
$\|\psi\|_{\Gamma(t_0), s_1}\le B$. We set 
$$
M_0:=\frac{\eps+\delta_s}{2},\ M_1:=2\left(1+\sqrt{A_s}\right)B,
$$
and
\begin{align*}
 T^*=\sup\left\{ \tau\in [0, T];\, 
  \sup_{t_0\le t\le t_0+\tau} \|u^{\nu}(t)\|_{\Gamma(t),s_0}\le M_0,\, 
  \sup_{t_0\le t\le t_0+\tau} \|u^{\nu}(t)\|_{\Gamma(t),s_1}\le M_1 
 \right\}.
\end{align*}
Since $\|u^{\nu}(t_0)\|_{\Gamma, s_0}\le \eps <M_0$ and $\|u^\nu(t_0)\|_{\Gamma, s_1}\le B<M_1$, we see that $T^*>0$
by the continuity of $\|u^\nu(t)\|_{\Gamma, s}$ for $s=s_0, s_1$ with respect to $t$.
Observing that $C_s^*M_0<C_s^*\delta_s\le \delta$ and 
$\left[\frac{s_1}{2}\right]+\left[\frac{d}{2}\right]+2=s_1$, 
it follows from \eqref{Ene05} with $k=s_1$ that
\begin{align}
 \|u^\nu(t_0+T^*)\|_{\Gamma, s_1}^2
 \le & 
 A_s \left(B^2+\int_{t_0}^{t_0+T^*} (1+M_{1})M_{1}^2d\tau\right)
 \nonumber\\
 \le &
 \frac{M_1^2}{4}+A_s(1+M_1)M_{1}^2T^*.
\label{Ene10}
\end{align}
\eqref{Ene06} yields
\begin{align}
\|u^\nu(t_0+T^*)||_{\Gamma, s_0}^2\le & \eps^2+C'\int_{t_0}^{t_0+T^*} M_0M_1^2d\tau
\le \eps^2+C'M_0M_1^2T^*.
\label{Ene11}
\end{align}
Now we put 
$$
T_s^*:=\min\left\{\frac{1}{2A_s(1+M_{1})}, \frac{M_0^2-\eps^2}{2C'M_0M_1^2}\right\},
$$
and suppose that $T\le T_s^*$. Then we have $T^*=T$, because
\eqref{Ene10} and \eqref{Ene11} imply
\begin{align*}
\|u^\nu(t_0+T^*)\|_{\Gamma, s_1}^2\le \frac{3}{4}M_1^2<M_1^2,\quad
\|u^\nu(t_0+T^*)\|_{\Gamma, s_0}^2\le \frac{\eps^2+M_0^2}{2}<M_0^2. 
\end{align*}

We have proved that, if $\|\psi\|_{\Gamma(t_0),s_0}\le \eps<\delta_s$,
$\|\psi\|_{\Gamma(t_0), s_1}\le B$,
and $T\le T_s^*$, then we have
\begin{equation}
\label{CC01}
\sup_{t_0\le t\le t_0+T} \|u^\nu(t)\|_{\Gamma, s_0}\le M_{0}(<\eps_s)
\end{equation}
and
\begin{equation}
\label{CC02}
\sup_{t_0\le t\le t_0+T} \|u^\nu(t)\|_{\Gamma, s_1}\le M_{1}.
\end{equation}
Therefore we see that there is a unique solution $u^\nu$ to \eqref{eq_aux}
on $[t_0,t_0+T_s^*]\times \R^d$.

\eqref{CC01} implies \eqref{Boundedness00}. 
We are going to prove that, for $s_1\le k\le s$, there is a positive constant
$C_{s,k}$ such that
\begin{equation}
\label{CC03}
\sup_{t_0\le t\le t_0+T_s^*} \|u^\nu(t)\|_{\Gamma,k}\le C_{s,k}.
\end{equation}
If $k=s_1$, \eqref{CC03} follows from \eqref{CC02}. Suppose that
\eqref{CC03} is true for some $k$ with $s_1\le k\le s-1$.
Noting that we have
$\left[\frac{k+1}{2}\right]+\left[ \frac{d}{2} \right]+2\le k$ for $k\ge s_1$, \eqref{Boundedness00} and
\eqref{Ene05} yield
\begin{align*}
\|u^\nu(t)\|_{\Gamma, k+1}^2\le A_s\left(\|\psi\|_{\Gamma(t_0), k+1}^2+
(1+C_{s,k})\int_{t_0}^t \|u^\nu(\tau)\|_{\Gamma, k+1}^2 d\tau\right).
\end{align*}
Now the Gronwall Lemma implies
$$
\sup_{t_0\le t\le t_0+T_s^*} \|u^\nu(t)\|_{\Gamma, k+1}\le \left(A_s\|\psi\|_{\Gamma(t_0),k+1}^2e^{A_s(1+C_{s,k})T_s^*}\right)^{\frac{1}{2}}=:C_{s,k+1},
$$
which shows \eqref{CC03} with $k$ replaced by $k+1$.
\eqref{CC03} with $k=s$ shows \eqref{Boundedness01}.

Finally, in view of \eqref{Boundedness00} and \eqref{Boundedness01}, 
going similar lines to the proof of \eqref{Ene05} but
using \eqref{QLE21} instead of \eqref{QLE22}, we obtain \eqref{Boundedness02}.

Investigating the proof above, we can easily check the assertion (ii).
This completes the proof of Lemma~\ref{LE_approx}.
\end{proof}

{\bf Step 3.} 
We write $I=(t_0, t_0+T_s^*)$ and $\overline{I}=[t_0, t_0+T_s^*]$
for simplicity of exposition.

\eqref{Boundedness01} implies the boundedness of
$\{u^\nu\}$ in $L^\infty(I; \Sigma^s)$, and
the weak* compactness of $L^\infty(I; \Sigma^s)$ implies that
there is a sequence $\{\nu_k\}\subset (0,1]$ with $\lim_{k\to \infty} \nu_k=0$ 
such that $u^{\nu_k}$ converges 
to some function $u$ weakly* in $L^\infty(I; \Sigma^s)$
as $k\to\infty$. By \eqref{Boundedness00}, we get
\begin{equation}
\label{DS3'}
\sup_{t\in I} \|u(t)\|_{\Gamma, s_0}\le \liminf_{k\to\infty} \sup_{t\in I} \bigl\|u^{\nu_k}(t)\bigr\|_{\Gamma, s_0}\le \frac{\eps+\delta_s}{2}(<\delta_s),
\end{equation}
which is nothing but \eqref{DS3}.

\eqref{Boundedness01} and \eqref{eq_aux} show that
$\{u^{\nu_k}\}$ is bounded in $C^{0,1}\bigl(\overline{I}; H^{s-2}\bigr)$,
which, together with the boundedness of $\{u^{\nu_k}\}$ in $L^\infty(I; H^s)$, implies that
$\{u^{\nu_k}\}$ is bounded in $C^{0,\frac{1}{2}}\bigl(\overline{I}; H^{s-1}\bigr)$.
 In view of the Rellich-Kondrachov theorem, we obtain by the abstract version of the Ascoli-Arzel\`a theorem that $\{u^{\nu_k}\}$ converges to $u$
 strongly in $C\bigl(\overline{I}; H^{s-2}_{\rm loc}\bigr)$.
This convergence is strong enough to show the convergence of
$F\bigl(u^{\nu_k}, \pa_x u^{\nu_k}\bigr)$
to $F(u,\pa_x u)$ in the distribution sense, and thus $u$ is the local solution to
\eqref{Eq}--\eqref{Data2}.

From \eqref{Eq}, we see that $\pa_tu\in L^\infty(I; \Sigma^{s-2})$,
which shows $u\in C\bigl(\overline{I}; \Sigma^{s-2}\bigr)$. 
Moreover, since 
$(t,x)\mapsto F_j\bigl(u(t,x),\pa_xu(t,x)\bigr)\in L^\infty(I; \Sigma^{s-1})\subset L^1(I; \Sigma^{s-1})$, 
the well-posedness in $\Sigma^{s-1}$ of the linear Schr\"odinger equations
implies $u\in C\bigl(\overline{I}; \Sigma^{s-1}\bigr)$.
Since $u\in L^\infty(I;\Sigma^s)$, by the weak compactness of the Hilbert space $\Sigma^s(\R^d)$, we see that $u\in C_w\bigl(\overline{I};\Sigma^{s})$.

What is left to show is the strong continuity of $u$ as a $\Sigma^s$-valued function.
Let $\tau>0$ and $|\alpha|\le s$. By \eqref{Boundedness02}, we obtain
\begin{align*}
\sup_{t\in[t_0,t_0+\tau]}\sum_{j=1}^3\left\|S_j\Gamma_{m_j}^\alpha u_j(t)\right\|_{L^2}^2
\le & \liminf_{k\to\infty}\sup_{t\in [t_0,t_0+\tau]}
\sum_{j=1}^3\left\|S_j\Gamma_{m_j}^\alpha u_j^{\nu_k}(t)\right\|_{L^2}^2\\
\le&  \sum_{j=1}^3\left\|S_j\Gamma_{m_j}^\alpha 
u_j(t_0)\right\|_{L^2}^2+D_s\tau.
\end{align*}
Hence we get 
$$
 \limsup_{t\to t_0+0}\sum_{j=1}^3\left\|S_j\Gamma_{m_j}^\alpha u_j(t)\right\|_{L^2}^2\le  \sum_{j=1}^3 \left\|S_j\Gamma_{m_j}^\alpha 
u_j(t_0)\right\|_{L^2}^2,
$$
and Lemma~\ref{ContiLem} yields
\begin{equation}
\label{ContiGoal01}
\lim_{t\to t_0+0} \|u(t)-u(t_0)\|_{\Sigma^s}=0.
\end{equation}

We fix $t_1\in (t_0, t_0+T_s^*)$. We consider the problem
\begin{equation}
\label{eq_aux2}
\begin{cases} 
L_{m_j} v_j=F_j(v, \pa_x v),\\
v_j(t_1)=u_j(t_1).
\end{cases}
\end{equation}
\eqref{DS3'} implies that $\|v(t_1)\|_{\Gamma, \left[\frac{d}{2}\right]+4}< \delta_s$.
Because of what we have proved so far, \eqref{eq_aux2} admits
a solution 
$$
 v\in C\bigl([t_1, t_1+T]; \Sigma^{s-1}(\R^d)\bigr)\cap C_w\bigl([t_1,t_1+T];\Sigma^s(\R^d)\bigr)
$$
for some $T>0$, with 
$$
\lim_{t\to t_1+0} \|v(t)-v(t_1)\|_{\Sigma^s}=0.
$$
We may assume $t_1+T\le t_0+T_s^*$. In view of \eqref{DS3'}, we can apply
Lemma~\ref{SchUniqueness} to conclude that
$v(t,x)=u(t,x)$ for $(t,x)\in [t_1, t_1+T]\times \R^d$.
Hence, recalling \eqref{ContiGoal01}, we find
\begin{equation}
\label{ContiGoal02}
\lim_{t\to t_1+0} \|u(t)-u(t_1)\|_{\Sigma^s}=0,\quad t_1\in [t_0, t_0+T_s^*).
\end{equation}

We fix $t_1\in (t_0, t_0+T_s^*]$, and consider the problem
\begin{equation}
\label{eq_aux3}
\begin{cases} 
L_{-m_j} w_j=-F_j(w, \pa_x w),\\
w_j(-t_1)=u_j(t_1).
\end{cases}
\end{equation}
Note that if we put $\widetilde{u}_j(t,x)=u(-t,x)$ for $-t_0-T_s^*\le t\le -t_0$,
then $\widetilde{u}=(\widetilde{u}_j)_{j=1,2,3}$ is a solution to \eqref{eq_aux3}.
Moreover, we have
$$
\Gamma_{-m_j}(t)^\alpha \widetilde{u}_j(t,x)=\Gamma_{m_j}(-t)^\alpha u_j(-t,x),
\quad t\in [-t_0-T_s^*, -t_0].
$$
In particular, we have 
$\Gamma_{-m_j}(-t_1)^\alpha w_j(-t_1,x)=\Gamma_{m_j}(t_1)^\alpha u_j(t_1, x)$. 
Now, thanks to \eqref{DS3'} again, \eqref{eq_aux3} admits
a solution 
$$
w\in C\bigl([-t_1, -t_1+T']; \Sigma^{s-1}(\R^d)\bigr)\cap C_w\bigl([-t_1,-t_1+T']; \Sigma^s(\R^d)\bigr)
$$
for some $T'>0$, with 
$$
\lim_{t\to -t_1+0} \|w(t)-w(-t_1)\|_{\Sigma^s}=0.
$$
We may assume $-t_1+T'\le -t_0$.
As we have observed, \eqref{DS3'} yields
$$
\sup_{t\in [-t_0-T_s^*,-t_0]} \left(
\sum_{j=1}^3\sum_{|\alpha|\le s} \|\Gamma_{-m_j}(t)^\alpha \widetilde{u}_j(t)\|_{L^2}^2
\right)^{1/2}<\delta_s\le \delta_0, 
$$
and Lemma~\ref{SchUniqueness} leads to $w(t,x)=\widetilde{u}(t,x)=u(-t,x)$ for
$(t,x)\in [-t_1,-t_1+T']\times \R^d$. Hence we get
\begin{equation}
\label{ContiGoal03}
\lim_{t\to t_1-0} \|u(t)-u(t_1)\|_{\Sigma^s}=0,\quad t_1\in (t_0, t_0+T_s^*].
\end{equation}
By \eqref{ContiGoal02} and \eqref{ContiGoal03}, we obtain $u\in C\bigl(\overline{I}; \Sigma^s\bigr)$ as desired.

Because of \eqref{DS3}, uniqueness of the solution follows from 
Lemma~\ref{SchUniqueness}.
\end{proof}

\medskip
\subsection*{Acknowledgments}
The authors are grateful to Professors Hiroyuki Takamura 
and Hideo Kubo for informing the authors of the observation in \cite{A}. 
The authors also thank Professor Nakao Hayashi for his 
encouragement and useful conversations.
The work of S.K. is partially supported by Grant-in-Aid for Scientific Research (C) (No.~23540241), JSPS.
The work of H.S. is partially supported by Grant-in-Aid for 
Scientific Research (C) (No.~25400161), JSPS.


\end{document}